\title{The parametric Frobenius problem and parametric exclusion}
\newcommand\blfootnote[1]{%
  \begingroup
  \renewcommand\thefootnote{}\footnote{#1}%
  \addtocounter{footnote}{-1}%
  \endgroup
}
\begin{document}
\maketitle
{\centering Bobby Shen\footnote{Bobby Shen,  Department of Mathematics, Massachusetts Institute of Technology,
Cambridge, Massachusetts, USA, \url{runbobby@mit.edu}} \par}

\newtheorem{theorem}{Theorem}[section]
\newtheorem{corollary}{Corollary}[theorem]
\newtheorem{lemma}[theorem]{Lemma}
\newtheorem{proposition}[theorem]{Proposition}
\newtheorem{conjecture}[theorem]{Conjecture}
 
\theoremstyle{definition}
\newtheorem{definition}[theorem]{Definition}
 
\theoremstyle{remark}
\newtheorem*{remark}{Remark}
\begin{abstract}
The Frobenius number of relatively prime positive integers $a_1, \ldots, a_n$ is the largest integer that is not a nononegative integer combination of the $a_i.$ Given positive integers $a_1, \ldots, a_n$ with $n \ge 2,$ the set of multiples of $\gcd(a_1, \ldots, a_n)$ which have less than $m$ distinct representations as a nonnegative integer combination of the $a_i$ is bounded above, so we define $f_{m, \ell}(a_1, \ldots, a_n)$ to be the $\ell^{\text{th}}$ largest multiple of $\gcd(a_1, \ldots, a_n)$ with less than $m$ distinct representations (which generalizes the Frobenius number) and $g_m(a_1, \ldots, a_n)$ to be the number of positive multiples of $\gcd(a_1, \ldots, a_n)$ with less than $m$ distinct representations. In the parametric Frobenius problem, the arguments are polynomials. Let $P_1, \ldots, P_n$ be integer valued polynomials of one variable which are eventually positive. We prove that $f_{m, \ell}(P_1(t), \ldots, P_n(t))$ and $g_m(P_1(t), \ldots, P_n(t)),$ as functions of $t,$ are eventually quasi-polynomial. A function $h$ is eventually quasi-polynomial if there exist $d$ and polynomials $R_0, \ldots, R_{d-1}$ such that for such that for sufficiently large integers $t,$ $h(t)=R_{t \pmod{d}}(t).$ We do so by formulating a type of parametric problem that generalizes the parametric Frobenius Problem, which we call a parametric exclusion problem. We prove that the $\ell^{\text{th}}$ largest value of some polynomial objective function, with multiplicity, for a parametric exclusion problem and the size of its feasible set are eventually quasi-polynomial functions of $t.$
\end{abstract}

\blfootnote{2010 \textit{Mathematics Subject Classsifications}: 05A16, 11D07, 90C10, 90C31\\ Keywords: Frobenius problem, integer programming, parametric integer linear programming, quasi-polynomials} 
\section{Introduction}

\subsection{The Parametric Frobenius Problem}

Let $a_1, \ldots, a_n$ be positive integers with greatest common divisor $1.$ The \textit{Frobenius number} $F(a_1, \ldots, a_n)$ is the largest integer that is not of the form $\sum_{i=1}^n b_i a_i$ for nonnegative integers $b_1, \ldots,  b_n.$ A related quantity is the number of positive integers which are not of this form, which we denote by $G(a_1, \ldots, a_n).$ Determining $F(a_1, \ldots, a_n).$ is known as the \textit{Frobenius Problem}, which has been well studied. In 1884, Sylvester proved that $F(a,b)=ab-a-b$ \cite{Syl}.  It is known that $G(a,b)=\frac{1}{2}(a-1)(b-1).$ Ram\'irez Alfons\'in's book is a broad survey of known results \cite{Ram}.

We also define these quantities for general positive integers $a_1, \ldots, a_n.$ Define $F(a_1, \ldots, a_n)$ to be the largest multiple of $\gcd(a_1, \ldots, a_n)$ which is not a $\mathbb{Z}_{\ge 0}$ linear combination of $a_1, \ldots, a_n,$ and define $G(a_1, \ldots, a_n)$ to be the number of positive multiples of $\gcd(a_1, \ldots, a_n)$ which are not $\mathbb{Z}_{\ge 0}$ linear combinations. It is easy to check that for positive integers $c, a_1, \ldots, a_n,$ $F(ca_1, \ldots, ca_n)=c F(a_1, \ldots, a_n),$ and $G(ca_1, \ldots, ca_n) = G(a_1, \ldots, a_n).$

The \textit{parametric Frobenius problem}, as defined by Roune and Woods \cite{RW}, is to determine, given functions $P_1, \ldots, P_n : \mathbb{Z} \rightarrow \mathbb{Z},$ $F(P_1(t), \ldots, P_n(t))$ as a function of the integer parameter $t.$ In this paper, $t$ is always a positive integer. For example, using Sylvester's result, for all integers $t > 2,$
\begin{displaymath} F(t, t-2) = \left \{ \begin{array}{ll} t(t-2)-t-(t-2) & t \equiv 1 \pmod{2} \\ 2\left(\frac{t}{2}\frac{t-2}{2} - \frac{t}{2}-\frac{t-2}{2}\right) & t \equiv 0 \pmod{2.} \\ \end{array} \right. \end{displaymath}

The two pieces are not the same polynomial. Rather, $F(t, t+2)$ is eventually quasi-polynomial.

\begin{definition}\label{deqp} $f$ is \textit{eventually quasi-polynomial} (EQP) if its domain is a subset of $\mathbb{Z}$ that contains sufficiently large integers and there exists a positive integer $d$ and polynomials $R_0, \ldots, R_{d-1}$ in $\mathbb{R}[u]$ such that for sufficiently large $t,$ $f(t)=R_{t \pmod{d}}(t).$ 
\end{definition}

We say that $d$ is a period of $f$ if polynomials $R_0, \ldots, R_{d-1}$ exist as above, and we call $R_0, \ldots, R_{d-1}$ the components. Similarly, one can show that $G(P_1(t), \ldots, P_n(t))$ is eventually quasi-polynomial in this example. We define functions which generalize the Frobenius problem using the following proposition.

\begin{proposition} \label{p1} Let $n$ be a positive integer at least $2$ and $a_1, \ldots, a_n$ be relatively prime positive integers. For integers $k,$ define $h(k)$ to be the number of nonnegative integer $n$-tuples $(b_1, \ldots, b_n)$ such that $k = \sum_{i=1}^n b_i a_i.$ Then $\lim_{k \rightarrow \infty} h(k) = \infty.$ \end{proposition}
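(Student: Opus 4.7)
The plan is to produce, for any target count $M$, a threshold $K_M$ such that $h(k) \ge M$ whenever $k \ge K_M$. The key tool is the observation that $\mathrm{lcm}(a_1,a_2)$ can be exchanged between $a_1$ and $a_2$ in any representation, yielding many representations from just one.

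First I would set $L = \mathrm{lcm}(a_1, a_2)$ and record that $L/a_1$ and $L/a_2$ are positive integers with $(L/a_1)a_1 = (L/a_2)a_2 = L$. This gives a free ``exchange move'': in any representation, I can replace the pair $(b_1, b_2)$ by $(b_1 + L/a_1,\, b_2 - L/a_2)$ without changing the value, provided $b_2 \ge L/a_2$; iterating this move $i$ times on a representation with $b_2 \ge j\cdot L/a_2$ produces $j+1$ distinct representations (distinct because the $a_1$-coefficient strictly increases with $i$).

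Second, I would invoke the hypothesis $\gcd(a_1,\ldots,a_n) = 1$ through the (standard) fact, already recalled in the excerpt, that the Frobenius number $F = F(a_1,\ldots,a_n)$ is finite, so every integer exceeding $F$ admits at least one nonnegative representation. Given $k > F + jL$, the integer $k - jL$ is representable, say $k - jL = \sum_{i=1}^n b_i a_i$ with $b_i \ge 0$; appending $j\cdot L$ and distributing it as $(j-i)(L/a_1)a_1 + i(L/a_2)a_2$ for $i = 0, 1, \ldots, j$ yields the $j+1$ required distinct representations of $k$ itself. Thus $h(k) \ge j+1$ whenever $k > F + jL$, and letting $j \to \infty$ (i.e., $k \to \infty$) forces $h(k) \to \infty$.

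There is no real obstacle here; the only place that needs care is verifying that the constructed tuples are genuinely distinct as ordered $n$-tuples, which follows immediately because the first coordinate equals $b_1 + (j-i)(L/a_1)$ and thus takes $j+1$ different values. The hypothesis $n \ge 2$ enters precisely in forming $L = \mathrm{lcm}(a_1, a_2)$, so the argument uses exactly the data provided.
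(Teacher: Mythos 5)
Your proof is correct and takes essentially the same approach as the paper: subtract a large multiple of a common multiple of $a_1, a_2$ from $k$, represent the remainder (possible since it exceeds the Frobenius number), then add that amount back distributed between $a_1$ and $a_2$ in $j+1$ ways. The only cosmetic difference is that you use $L = \mathrm{lcm}(a_1,a_2)$ where the paper uses $a_1 a_2$, which changes nothing essential.
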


\begin{proof} Let $N$ be a positive integer. For all integers $k$ greater than $N a_1 a_2 + F(a_1, \ldots, a_n),$ $\\ k-Na_1 a_2 > F(a_1, \ldots, a_n),$ so there exist $c_1, \ldots, c_n$ in $\mathbb{Z}_{\ge 0}$ such that $k-Na_1 a_2 = \sum_{i=1}^n c_i a_i.$ Since $n \ge 2,$ for all integers $q$ with $0 \le q \le N,$
\begin{displaymath} k=\left(\sum_{i=3}^n c_i a_i \right) + (c_1 + q a_2) a_1 + (c_2 + (N-q) a_1)a_2. \end{displaymath}
All coefficients for all values of $\ell$ are nonnegative integers, which gives us $N+1$ distinct representations, which shows that $h(k)$ goes to infinity. \end{proof}

\begin{definition} Let $n, m, \ell, a_1, \ldots, a_n$ be positive integers, where $n \ge 2.$ 

$F_{m, \ell}(a_1, \ldots, a_n)$ is the $\ell^{\text{th}}$ largest integer in the set $\{k \in \mathbb{Z} \gcd(a-1, \ldots, a_n) \mid h(k) < m\},$ where $h(k)$ is as above.

$G_{m}(a_1, \ldots, a_n)$ is the number of positive multiples of $\gcd(a_1, \ldots, a_n)$ $k$ such that $h(k)<m.$ \end{definition}

These numbers exist by above Proposition \ref{p1}. These new functions are more general than the usual functions for the Frobenius problem: \begin{displaymath} F_{1, 1}(a_1, \ldots, a_n) = F(a_1, \ldots, a_n). \quad G_1(a_1, \ldots, a_n) = G(a_1, \ldots, a_n). \end{displaymath} As before, for positive integers $c, a_1, \ldots, a_n,$ we have $F_{m, \ell}(ca_1, \ldots, ca_n)=cF_{m,\ell}(a_1, \ldots, a_n)$ and $G_m(ca_1, \ldots, ca_n)=G_m(a_1, \ldots, a_n).$
 
Again, one can consider the case when the argumentss are polynomials of $t.$ In order for the functions to be defined for sufficiently large $t,$ we should have polynomials $P_1, \ldots, P_n$ that map $\mathbb{Z}$ to $\mathbb{Z}$ and have positive leading coefficient. In 2015, Roune and Woods conjectured the following for such polynomials \cite{RW}.

\begin{conjecture} $P_1, \ldots, P_n$ be in $\mathbb{R}[u]$ which map $\mathbb{Z}$ to $\mathbb{Z}$ and have positive leading coefficient. For all $t$ such that $P_i(t)>0$ for all $i,$ let $D(t)=F(P_1(t), \ldots, P_n(t)).$ Then $D$ is EQP.
\end{conjecture}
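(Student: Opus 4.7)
The plan is to prove the conjecture as a specialization of a more general theorem about parametric exclusion problems, as hinted by the abstract. First I would set up a parametric exclusion instance in which the parameter is $t$, the decision variable is an integer $k$, the objective function is $k$ itself, and the feasible set at parameter $t$ consists of those positive multiples of $\gcd(P_1(t), \ldots, P_n(t))$ which admit no representation $k = \sum_{i=1}^n b_i P_i(t)$ with $b_i \in \mathbb{Z}_{\ge 0}$. By definition $D(t)$ coincides with the largest feasible objective value, i.e., the case $m = \ell = 1$. Thus a general EQP theorem for the largest feasible objective value in a parametric exclusion problem yields the conjecture immediately, and the same setup with larger $m$ and $\ell$ gives the EQP behavior of $F_{m, \ell}$ and $G_m$ promised in the abstract.

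Next I would reduce the general parametric exclusion problem to something tractable. The key preliminary step is an a priori polynomial bound on the Frobenius number: using a parametric analog of the elementary inequality $F(a_1, a_2) \le a_1 a_2$, the feasible set at parameter $t$ lies inside an interval $[0, C(t)]$ for some $C \in \mathbb{Z}[t]$. Within this polynomial window I would then describe the representable semigroup $S(t) = \{\sum_i b_i P_i(t) : b_i \in \mathbb{Z}_{\ge 0}\}$ using short rational generating functions in the spirit of Barvinok and Woods, but with exponents that depend polynomially (not just linearly) on $t$. From such a description one can extract the excluded set, count it to obtain $G_m$, and read off its $\ell$-th largest element to obtain $F_{m, \ell}$; each of these operations should be shown to preserve EQP-ness, by induction on the number of generators $n$ or on the complexity of the parametric description.

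The main obstacle is the polynomial, rather than linear, dependence of the generators $P_i$ on the parameter $t$. Classical parametric integer programming results, such as Kannan's theorem and the Verdoolaege--Woods counting theorem, deliver piecewise quasi-polynomial answers only when the problem data depend linearly on the parameter, and cannot be applied directly here. The virtue of the parametric exclusion formalism, which I would develop in parallel with the proof, is that it isolates the minimal structure needed to propagate EQP-ness through this polynomial-parametric setting: polynomial constraints in $(b_1, \ldots, b_n, k)$, a distinguished exclusion quantifier, a polynomial objective, and polynomial coefficients in $t$. The delicate step, and the one I expect to be the crux, is executing an induction that trades a complicated polynomial-parametric exclusion instance for finitely many simpler ones while maintaining control both on the $\ell$-th largest feasible value and on the size of the feasible set.
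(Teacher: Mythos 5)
Your high-level framing agrees with the paper: cast $F_{m,\ell}$ as the $\ell$-th largest objective value of a parametric exclusion problem, use a polynomial bound (the paper uses Erd\H{o}s--Graham) to confine the feasible set to a window of size $t^r$, and then prove a general EQP theorem for such problems. Up to that point you are on the right track, and you correctly sense that the polynomial (not linear) dependence of the coefficients on $t$ is the genuine obstruction, so that Kannan-type and Verdoolaege--Woods-type results do not apply directly.

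However, there is a real gap in the core of the argument. The paper does \emph{not} use short rational generating functions, and the tool you reach for --- ``Barvinok--Woods but with exponents that depend polynomially on $t$'' --- is not an existing theorem; building it would be at least as hard as the problem itself, and your proposal never says how the extraction, counting, and $\ell$-th-largest operations would be shown to preserve EQP-ness. The missing idea, which is the engine of the paper, is the base-$t$ digit representation of all the unknowns: writing each bounded integer indeterminate as an $r$-digit number in base $t$ (following Chen--Li--Sam, Theorem~\ref{Chen}, and Shen, Theorem~\ref{Shen}) converts a system whose coefficient matrix is polynomial in $t$ into finitely many systems whose coefficient matrices are constant integer matrices (Theorem~\ref{main3} and Proposition~\ref{bt1}). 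That is what makes the linear algebra tractable. After that reduction, the paper's proof hinges on two further devices you don't mention: a rooted tree that records the order in which linear dependencies among the projected-out coordinates are eliminated (so that the $<m$ preimages of a point $\mathbf{y_2}$ can be counted by inspecting finitely many $(q-q_1)$-dimensional parametric hyperrectangles, Propositions~\ref{child}--\ref{c321}); and an auxiliary PILP $Q^*$ with extra ``remainder'' indeterminates to encode the integrality of the solved-for coordinates (Step 4 and Proposition~\ref{Q}), whose constraints are a finite disjunction of conjunctions of parametric inequalities, handled by the disjoint-disjunction rewriting in Section~\ref{sec-untangling}. You also omit the preliminary reduction to the case $\gcd(P_1(t),\ldots,P_n(t))=1$ for $t \gg 0$ (Propositions~\ref{red1}, \ref{red2}). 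Without the base-$t$ decomposition and the tree/auxiliary-PILP machinery, the ``induction that trades a complicated instance for finitely many simpler ones'' remains a wish rather than a proof.
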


Roune and Woods conjectured that for such polynomials, $F(P_1(t), \ldots, P_n(t))$ is EQP. They actually conjectured this result when $P_1, \ldots, P_n$ are EQP, map $\mathbb{Z}$ to $\mathbb{Z}$ and are eventually positive. Since $n$ is finite, it's easy to see that this broader case reduces to the case of polynomials. They proved the conjecture for the case $n\le 3$ and the case when all polynomials have degree at most $1$ and proved certain results about the period of the resulting EQP and its degree (or the maximum degree of the components). In this paper, we do not address the degree or period of the EQPs. Otherwise, we prove a far more general result about the parametric Frobenius problem.

\begin{theorem} \label{main} Let $n, m,$ and $\ell$ be positive integers with $n \ge 2$ and $P_1, \ldots, P_n$ be polynomials that map $\mathbb{Z}$ to $\mathbb{Z}$ and have positive leading coefficient. 

Let $D(t) = F_{m,\ell}(P_1(t), \ldots, P_n(t))$ and $E(t) = G_{m}(P_1(t), \ldots, P_n(t))$ for all integers $t$ such that $P_1(t), \ldots, P_n(t)$ are positive; this holds for sufficiently large $t.$ Then $D$ and $E$ are EQP.
\end{theorem}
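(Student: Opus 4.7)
The plan is to reduce Theorem \ref{main} to a general structural result about what the paper calls the parametric exclusion problem, and then to prove that result using parametric integer programming together with the short rational generating function machinery of Barvinok and Woods. The key observation is that the condition $h_t(k)<m$ can be encoded by the non-emptiness (or more precisely, by a bound on the number of integer points) of a parametric rational polyhedron in a higher dimensional space.

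First, I would introduce the auxiliary set in $\mathbb{Z}^{1+mn}$ consisting of tuples $(k,\vec b^{(1)},\ldots,\vec b^{(m)})$ with each $\vec b^{(j)}\in\mathbb{Z}_{\ge 0}^n$, $\sum_i b^{(j)}_i P_i(t) = k$ for every $j$, and $\vec b^{(1)},\ldots,\vec b^{(m)}$ pairwise distinct. The distinctness condition is not convex but is a finite union of strict lexicographic inequalities, so the auxiliary set is a finite union of parametric rational polyhedra in which the entries of the defining matrices are polynomial in $t$. Projecting to the $k$-coordinate gives the set $K(t)=\{k\in\gcd(\vec P(t))\mathbb{Z}: h_t(k)\ge m\}$, and the set we ultimately care about is $S(t)=\gcd(\vec P(t))\mathbb{Z}\setminus K(t)$, which by Proposition \ref{p1} is bounded above and hence finite in $\mathbb{Z}_{>0}$.

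Second, I would apply parametric IP techniques (Kannan's theorem, together with the Barvinok–Woods results on projections of short rational generating functions that depend polynomially on parameters) to encode both $K(t)$ and its complement $S(t)$ inside a large but polynomially bounded box as short rational generating functions whose numerator monomials and denominator vectors are themselves quasi-polynomial in $t$. This is the heart of the parametric exclusion framework: one must check that projections, complements within parametrically bounded boxes, and Boolean combinations of such sets preserve the short-GF-with-quasi-polynomial-data form. Given such a representation, standard specialization tricks express sums like $\sum_{k\in S(t)} k^j$ or $|S(t)|$ as EQP functions of $t$.

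Third, to extract $F_{m,\ell}(\vec P(t))$ itself, I would decompose $S(t)$ into finitely many quasi-polynomial families, each a set of integer points of the form $\{Q_r(t,s) : s\in I_r(t)\}$ with $Q_r$ a quasi-polynomial in $t$ and $s$ and $I_r(t)$ an interval of integers whose endpoints are EQP in $t$. The $\ell^{\text{th}}$ largest element of $S(t)$ is then computed by a finite case analysis over which families contribute, with each case yielding an EQP expression; passing to the overall maximum over cases requires noting that the ordering of two EQP functions is eventually periodic in $t$, so one case wins on each residue class modulo a common period. The count $E(t)$ is obtained by summing the EQP cardinalities $|I_r(t)|$ over the (eventually fixed) set of families contributing positive elements.

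The main obstacle is making the third step uniform enough to handle the $\ell^{\text{th}}$ largest with multiplicity, rather than just the maximum. Standard parametric IP delivers the optimum of a linear objective over a parametric polyhedron as an EQP, but $S(t)$ is not itself a polyhedron, it is the complement of a projection, and the objective is not linear but rather "rank". The parametric exclusion formulation is designed precisely to abstract this: once one proves that the feasible set admits a short-GF description with quasi-polynomial data and the objective is polynomial, the $\ell^{\text{th}}$ largest value follows by a layered argument peeling off the top $\ell-1$ values one at a time, each time removing a quasi-polynomial family and reapplying the EQP maximization result. Verifying that this peeling process terminates and stays within the class of parametric exclusion problems, without the bit complexity of the generating functions blowing up non-uniformly in $t$, is the step I expect to be the most technical.
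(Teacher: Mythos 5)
Your overall framing --- encode ``$k$ has at least $m$ representations'' as the projection of a parametric polyhedral family, take the complement inside a parametrically bounded box, and then extract the count and the $\ell^{\text{th}}$ largest element --- is essentially the paper's parametric exclusion setup (Theorem \ref{main2}), and your first step is a reasonable variant of it. The genuine gap is in your second step. You invoke ``Barvinok--Woods results on projections of short rational generating functions that depend polynomially on parameters'' and say one must ``check'' that projections, complements, and Boolean combinations preserve a short-GF-with-quasi-polynomial-data form. No such parametric theorem exists to be cited: Barvinok--Woods and Kannan are statements about a \emph{fixed} system (algorithmic, in fixed dimension), and they say nothing about how the output data vary as the matrix entries are polynomials in $t$. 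Establishing EQP behavior for the complement of a projection of a parametric lattice-point family is precisely the content of the Roune--Woods conjecture and of this paper's Theorems \ref{main3} and \ref{main2}; it is not a routine verification but the entire difficulty. The paper fills exactly this hole with specific machinery you do not supply: Proposition \ref{bounded} and the base-$t$ digit substitution to replace the polynomial matrix $A_1(t)$ by a constant integer matrix (Proposition \ref{bt1}), the tree of parametric hyperrectangles in Section \ref{sec-ep} that converts ``fewer than $m$ preimages'' into finitely many solvable $(q-q_1)\times(q-q_1)$ systems, auxiliary remainder indeterminates to express integrality of the resulting rational solutions as parametric constraints, and the disjoint-disjunction untangling of Section \ref{sec-untangling} so that Theorems \ref{Chen} and \ref{Shen} apply. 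Without an argument at this level of detail, your step 2 is an assertion of the theorem rather than a proof of it.

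Two secondary issues. First, your third step is both underspecified and avoidable: the ``peeling'' of the top values is not clearly expressible within your class of sets (removing the points attaining the current maximum is again a complement of a projection-type condition), whereas the paper sidesteps this entirely because Theorem \ref{Shen} already gives the $\ell^{\text{th}}$ largest value with multiplicity as an EQP, and a disjoint decomposition plus Proposition 3.4 of \cite{Shen} handles unions. Second, you gloss over the varying modulus $\gcd(P_1(t),\ldots,P_n(t))$: since $F_{m,\ell}$ and $G_m$ are defined over multiples of this gcd, one must first reduce to the eventually-coprime case, which the paper does via the Roune--Woods lemmas (gcd of polynomial values is EQP, and division by it preserves EQP) together with restriction to arithmetic progressions in $t$ (Propositions \ref{red1} and \ref{red2}); your sketch treats $\gcd(\vec P(t))\mathbb{Z}$ as if it posed no additional difficulty.
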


As noted by Roune and Woods, when $P_1, \ldots, P_n$ are polynomials of two integer parameters, $F(P_1, \ldots, P_n)$ is a function of two parameters which is generally not considered eventually quasi-polynomial. For example, $F(s, t)=\text{lcm}(s,t)-s-t$ is not considered quasi-polynomial; it lacks a periodic structure as $(s,t)$ ranges over the integer lattice.
\subsection{Parametric Integer Linear Programming}

The parametric Frobenius problem has a particular parametric structure that motivates us to examine a more general parametric problem.  In this section, we introduce parametric integer linear programs and another type of parametric problem. The latter type is a generalization of the parametric Frobenius problem, and we show that related functions are eventually quasi-polynomial.

An integer program is the optimization of a certain objective function over the integers subject to certain constraints. Often in integer programming, the constraints and objective functions are linear functions of the indeterminates. This is known as integer linear programming.

Suppose that the indeterminates are $\mathbf{x}=(x_1, \ldots, x_n).$ A program is in canonical form if the objective function is $ \mathbf{c}^{\intercal} \mathbf{x}$ for $\mathbf{c} \in \mathbb{Z}^n,$ we are trying to maximize the objective, and the constraints are $x_i  \in \mathbb{Z}_{\ge 0}$ for all $i$ and $A\mathbf{x} \le \mathbf{b}$ for $A \in \mathbb{Z}^{m \times n}$ and $b \in \mathbb{Z}^m.$ (In this paper, relations between vectors are coordinate-wise.)

Parametric Integer Linear Programming refers to considering a family of linear integer programs parametrized by a variable $t$ i.e. the coefficients of the objective and/or constraints are functions of $t.$ The optimum value of the objective function is a function of $t,$ (which we call the optimum value function) which leads us to questions about this function. Examples for the domain of $t$ are the interval $[0,1]$ or the positive integers. There are many algorithmic results on parametric integer linear programs (PILP) but few theoretical results the properties of the optimum value function or other properties of a PILP. 

A type of PILP relevant to the parametric Frobenius problem is the case when all coefficients are integer polynomials of $t$ and $t$ is an integer parameter.

We use two known results about such PILPs. The following is Theorem 2.1 of \cite{CLS} by Chen, Li, and Sam.

\begin{theorem} \label{Chen}
Let $n$ and $m$ be positive integers. Let $\mathbf{c}$ be in $\mathbb{Z}[u]^n$, $A$ be in $\mathbb{Z}[u]^{m \times n}$, and $\mathbf{b}$ be in $\mathbb{Z}[u]^m$

For all $t$, let 
\begin{displaymath} R(t) := \{ \mathbf{x} \in \mathbb{R}^n \mid A(t) \mathbf{x} \le \mathbf{b}(t) \},\end{displaymath}

the set of real vectors that satisfy a certain conjunction of inequalities. Let $L(t)$ be the set of lattice points in $R(t).$ Assume that $R(t)$ is bounded for all $t.$ Let $g(t)=|L(t)|.$ Then $g$ is eventually quasi-polynomial.
\end{theorem}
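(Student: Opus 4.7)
The plan is to prove this by adapting the Brion--Barvinok theory of rational generating functions for lattice points to the parametric setting. The non-parametric Ehrhart theorem (that $|tP\cap\mathbb{Z}^n|$ is quasi-polynomial in $t$ for a fixed rational polytope $P$) is the baseline; here the polytope itself depends polynomially on $t$, so a combinatorial and arithmetic stabilization argument is needed, but the final answer should still be EQP.

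First I would stabilize the combinatorial type of $R(t)$ for large $t$. For each size-$n$ subset $S$ of rows of $A$, the determinant $\det A_S(t)$ lies in $\mathbb{Z}[t]$, so for large $t$ it is identically zero or has a fixed sign. When nonzero, Cramer's rule produces a candidate vertex $\mathbf{v}_S(t)$ with coordinates in $\mathbb{Q}(t)$. Whether $\mathbf{v}_S(t)$ satisfies each remaining inequality $\mathbf{a}_i(t)^\intercal \mathbf{v}_S(t) \le b_i(t)$ reduces to the sign of a rational function of $t$, which is eventually constant. Hence for sufficiently large $t$, the set of active vertices and the full incidence pattern between vertices and facets are fixed. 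I then choose once and for all a combinatorial triangulation of this incidence structure, which for every large $t$ induces a triangulation of $R(t)$ into rational simplices $\Delta_1(t),\ldots,\Delta_N(t)$; standard inclusion-exclusion on shared faces reduces counting $|R(t)\cap\mathbb{Z}^n|$ to counting lattice points in each $\Delta_j(t)$ and its faces.

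For each simplex $\Delta(t)$, Brion's theorem expresses its generating function $\sum_{\mathbf{x}\in\Delta(t)\cap\mathbb{Z}^n}\mathbf{z}^{\mathbf{x}}$ as a signed sum over vertices of generating functions of tangent cones. Applying Barvinok's unimodular cone decomposition to each tangent cone produces a signed sum of generating functions of the form
\begin{displaymath}
\sigma(\mathbf{z};t) \;=\; \frac{\mathbf{z}^{\mathbf{w}(t)}}{\prod_{i=1}^n \bigl(1-\mathbf{z}^{\mathbf{u}_i}\bigr)},
\end{displaymath}
where the generators $\mathbf{u}_i\in\mathbb{Z}^n$ are independent of $t$ (they depend only on the unimodular decomposition and the stabilized edge directions, which are rational functions of $t$ whose ``integer direction'' is eventually constant after a residue refinement), and the shift $\mathbf{w}(t) = U\lceil U^{-1}\mathbf{v}(t)\rceil\in\mathbb{Z}^n$ is the unique lattice point in the half-open parallelepiped spanned by $U=[\mathbf{u}_1,\ldots,\mathbf{u}_n]$ based at the vertex $\mathbf{v}(t)$. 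Specializing $\mathbf{z}\to\mathbf{1}$ along a generic one-parameter subgroup and extracting the constant term of the resulting Laurent expansion yields the lattice point count as a polynomial expression in the coordinates of the various $\mathbf{w}(t)$.

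The hard part will be showing that $\mathbf{w}(t)$ is itself EQP. Each coordinate of $U^{-1}\mathbf{v}(t)$ is a rational function $p(t)/q(t)$ with $p,q\in\mathbb{Z}[t]$; by polynomial long division $p(t)=q(t)Q(t)+r(t)$ with $\deg r<\deg q$, so $p(t)/q(t) = Q(t)+r(t)/q(t)$ and the correction $r(t)/q(t)\to 0$. Although $Q\in\mathbb{Q}[t]$ need not be integer-valued on $\mathbb{Z}$, on each residue class of $t$ modulo a common denominator $D$ of the coefficients of $Q$ the function $t\mapsto Q(t)$ is an integer-valued polynomial; combined with the eventual sign of $r(t)/q(t)$, this makes $\lceil p(t)/q(t)\rceil$ EQP on each class, and therefore $\mathbf{w}(t)$ is EQP after a finite refinement of residue classes. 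Since the triangulation, Brion decomposition, and Barvinok decomposition each produce only finitely many cones, $g(t)$ is a finite $\mathbb{Z}$-linear combination of EQP functions of $t$ and is itself EQP.
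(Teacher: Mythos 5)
This theorem is cited without proof from Chen, Li, and Sam \cite{CLS}; the only thing the present paper says about their argument is that it uses base-$t$ representations of the indeterminates to reduce to the case where $A$ has constant integer entries. Your Brion--Barvinok strategy is a genuinely different route, but as written it has a gap that breaks precisely when $A$ has nonconstant polynomial entries, which is the whole content of the theorem.

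The unsupported step is the claim that the unimodular generators $\mathbf{u}_i$ can be chosen independent of $t$ because the ``integer direction'' of each stabilized edge is eventually constant after a residue refinement. That is false already for the edges of $R(t)$ itself: for $R(t)=\{(x,y): x,y\ge 0,\ tx+y\le t^2\}$ the edge from $(t,0)$ to $(0,t^2)$ has primitive integer direction $(-1,t)$, which does not stabilize on any arithmetic progression in $t$. Worse, when $A$ has nonconstant entries the lattice index of a simplicial vertex cone is in general a nonconstant polynomial in $t$, so the number of unimodular cones produced by Barvinok's decomposition grows without bound, and $g(t)$ cannot be expressed as a \emph{finite} $\mathbb{Z}$-linear combination of the EQP building blocks your argument constructs. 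The piece of your argument that is sound---that $\lceil p(t)/q(t)\rceil$ is EQP, by splitting off the polynomial quotient $Q(t)$ (whose fractional part is periodic modulo a common denominator of its coefficients) from the decaying remainder $r(t)/q(t)$ (whose sign eventually stabilizes)---is essentially what one needs once $A$ is constant, because then the vertex cones really are fixed and only the vertices move in $\mathbb{Q}[t]^n$. The missing ingredient is the reduction from $A\in\mathbb{Z}[u]^{m\times n}$ to $A\in\mathbb{Z}^{m\times n}$, which Chen, Li, and Sam achieve by writing each coordinate in base $t$ so that the polynomial entries of $A(t)$ are absorbed into powers of $t$ and the digit variables satisfy constraints with a constant coefficient matrix; without that step, your generating-function proof only covers the constant-$A$ special case.
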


A critical tool used in the proof of this theorem is base $t$ representations, which reduces the theorem to the case of $A$ in $\mathbb{Z}^{m \times n}.$ Shen used the same idea to prove the following result, which concerns the optimization of an objective function rather than counting points.

\begin{theorem}[{Corollary 3.2.2 in \cite{Shen}}] \label{Shen}
Let $n$ and $m$ be positive integers. Let $\mathbf{c}$ be in $\mathbb{Z}[u]^n$, $A$ be in $\mathbb{Z}[u]^{m \times n}$, and $\mathbf{b}$ be in $\mathbb{Z}[u]^m$

For all $t$, let 
\begin{displaymath} R(t) := \{ \mathbf{x} \in \mathbb{R}^n \mid A(t) \mathbf{x} \le \mathbf{b}(t) \},\end{displaymath}

the set of real vectors that satisfy a certain conjunction of inequalities. Let $L(t)$ be the set of lattice points in $R(t).$ Assume that $R(t)$ is bounded for all $t.$ For all positive integers $\ell,$ Let $f_\ell(t)$ be the $\ell^{\text{th}}$ largest value of $\mathbf{c}^{\intercal}(t) \mathbf{x}$ with multiplicity for $\mathbf{x}$ in $L(t)$ or $-\infty$ if $|L(t)|<\ell$. Then for all $\ell,$ $f_\ell$ is eventually quasi-polynomial.
\end{theorem}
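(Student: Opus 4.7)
The plan is to extend the base-$t$ reduction of Chen, Li, and Sam that proves Theorem \ref{Chen} from counting to optimization. Following their approach, I would first reduce to the case where the constraint matrix $A$ is a fixed integer matrix, independent of $t$, at the cost of enlarging the number of variables (by introducing digit-like auxiliary variables bounded between $0$ and $t-1$ via base-$t$ expansion of the $x_i$'s). The right-hand side $\mathbf{b}(t)$ and the objective vector $\mathbf{c}(t)$ remain polynomial in $t$ after this substitution, and the feasible set $L'(t)$ in the new variables is still bounded.

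In this reduced setting, the objective value $\mathbf{c}(t)^\intercal \mathbf{x}$ becomes a polynomial in $t$ of some fixed degree $K$ whose coefficients $\gamma_K(\mathbf{y}),\ldots,\gamma_0(\mathbf{y})$ are linear functions of the new variables $\mathbf{y}$. For $t$ sufficiently large, the descending order on objective values among feasible integer points agrees with the descending lexicographic order on the tuples $\bigl(\gamma_K(\mathbf{y}),\ldots,\gamma_0(\mathbf{y})\bigr)$, so finding $f_\ell(t)$ reduces to locating the $\ell^{\text{th}}$ lex-largest such tuple. I would do this by a recursive procedure: compute $M_K(t):=\max_{\mathbf{y}\in L'(t)}\gamma_K(\mathbf{y})$ and $N_K(t):=\#\{\mathbf{y}\in L'(t):\gamma_K(\mathbf{y})=M_K(t)\}$, both of which are EQP in $t$ (the count directly by Theorem \ref{Chen}, and the maximum by a binary-search argument using the same EQP counts, since boundedness of $R(t)$ forces $M_K$ to lie in a polynomial-in-$t$ range). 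If $N_K(t)\ge \ell$, fix $\gamma_K=M_K(t)$ and recurse on the sliced polytope to find the $\ell^{\text{th}}$ lex-largest tuple of $(\gamma_{K-1},\ldots,\gamma_0)$; otherwise, restrict to the sub-polytope $\gamma_K(\mathbf{y})\le M_K(t)-1$ and search for the $(\ell-N_K(t))^{\text{th}}$ lex-largest tuple there. Each recursion step either decreases $K$ by one or decreases the current $\ell$ by at least one, so the recursion terminates in finitely many stages.

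The main obstacle is ensuring that this branching recursion produces a single EQP output despite apparently $t$-dependent case analysis. Since $M_K(t)$ and $N_K(t)$ are EQP, the truth of the inequality $N_K(t)\ge \ell$ and the value $\ell-N_K(t)$ are eventually periodic in $t$. By stratifying $t$ by a common refinement of all periods encountered across the finitely many recursive calls and taking $t$ sufficiently large, each residue class fixes a unique branch of the recursion; on that branch, the final value is a polynomial combination of EQP quantities and is therefore a polynomial in $t$. Reassembling the branches across residue classes yields the required EQP description of $f_\ell$. A secondary point is that the new slicing constraint $\gamma_K(\mathbf{y})=M_K(t)$ (or $\gamma_K(\mathbf{y})\le M_K(t)-1$) must still be expressible with polynomial-in-$t$ data in order to invoke Theorem \ref{Chen} on the sub-problem; this is fine because $M_K$ is polynomial on each residue class, so the recursion is carried out residue class by residue class and the results are glued at the end.
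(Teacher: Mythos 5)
Note first that this paper does not actually give a proof of the cited Theorem \ref{Shen}; it is quoted from Shen's separate paper \cite{Shen}, so there is no in-paper proof to compare against, and your proposal must stand on its own.

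Your argument has a genuine gap at the step where you claim that, for $t$ sufficiently large, the descending order on objective values $\sum_{k=0}^{K}\gamma_k(\mathbf{y})\,t^k$ agrees with the descending lexicographic order on the tuples $(\gamma_K(\mathbf{y}),\ldots,\gamma_0(\mathbf{y}))$. After the base-$t$ expansion the new variables $y_{i,j}$ range over $\{0,\ldots,t-1\}$, so the linear forms $\gamma_k(\mathbf{y})$ are not bounded by a constant but only by $O(t)$: each can be as large in magnitude as $C(t-1)$, where $C$ is the sum of the absolute values of its integer coefficients. Whenever $C>1$ the lexicographic claim fails. Concretely, take $n=2$, $\mathbf{c}(t)=(1,1)$, $R(t)=[0,t^2-1]^2$, so $r=2$ and $x_i=y_{i,1}+y_{i,2}t$; then $\gamma_1=y_{1,2}+y_{2,2}$ and $\gamma_0=y_{1,1}+y_{2,1}$. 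The point $(y_{1,1},y_{1,2},y_{2,1},y_{2,2})=(0,1,0,0)$ has tuple $(1,0)$ and objective value $t$, while $(t-1,0,t-1,0)$ has tuple $(0,2t-2)$ and objective value $2t-2$, which exceeds $t$ for all $t\ge 3$. So the lex-larger tuple does not have the larger objective, and your recursion on $M_K(t)$ and $N_K(t)$, which rests entirely on this identification, does not in general locate the $\ell^{\text{th}}$ largest value.

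To repair the argument you would need to manage carries between digit positions: since each $\gamma_k(\mathbf{y})$ ranges over an interval of width $\Theta(t)$, one must decompose $\gamma_k(\mathbf{y})$ into a bounded carry and a residual in $[0,t)$, and the carry must be propagated into the coefficient of $t^{k+1}$; only after a finite case analysis over the bounded carry vector does the lexicographic comparison become valid, and in that setting the digits of the objective value are no longer simply the $\gamma_k$. A secondary and independent issue is your assertion that $M_K(t)=\max_{\mathbf{y}\in L'(t)}\gamma_K(\mathbf{y})$ is EQP ``by a binary-search argument using the same EQP counts.'' This is effectively the $\ell=1$ case of the very theorem you are proving and needs a real argument: an adaptive binary search chooses midpoints that depend on $t$ in a non-polynomial way, so it does not obviously yield the required eventual periodic-polynomial structure.
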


The constant function at $-\infty$ is to be interpreted as a polynomial. By $\ell^{\text{th}}$ largest value with multiplicity, we mean the $\ell^{\text{th}}$ largest value in the multiset formed by evaluating the objective function on the set $L(t).$

An derivative of PILPs that is the focus of this paper involves excluding the projection of one parametric linearly defined set of integers from another parametric linearly defined set. We call this a parametric exclusion problem

\begin{theorem}[Main Theorem] \label{main2} 

Let $m, n_1, n_2, p_1, p_2$ be positive integers. Let $\mathbf{c}$ be in $\mathbb{Z}[u]^{n_2},$ $A_1$ be in $\mathbb{Z}[u]^{p_1 \times (n_1 + n_2)},$ $A_2$ be in $\mathbb{Z}[u]^{p_2 \times n_2},$ $\mathbf{b_1}$ be in $\mathbb{Z}[u]^{p_1},$ and $\mathbf{b_2}$ be in $\mathbb{Z}[u]^{p_2}.$ Let the indeterminates be $x_1, \ldots, x_{n_1+n_2}.$ Let $\mathbf{x_1}=(x_1, \ldots, x_{n_1+n_2}), \mathbf{x_2}=(x_1, \ldots, x_{n_2}),$ and $\mathbf{x_3}=(x_{n_2+1}, \ldots, x_{n_1+n_2}).$

For all $t,$ let 
\begin{displaymath} R_1(t):=\{\mathbf{x_1} \in \mathbb{R}^{n_1+n_2} \mid \mathbf{x_1} \ge \mathbf{0} \wedge A_1(t) \mathbf{x_1} = \mathbf{b_1}(t)\} \end{displaymath}

and 
\begin{displaymath} R_2(t):=\{\mathbf{x_2} \in \mathbb{R}^{n_2} \mid \mathbf{x_2} \ge \mathbf{0} \wedge A_2(t) \mathbf{x_2} \le \mathbf{b_2}(t)\}. \end{displaymath}

Assume that $R_1(t)$ and $R_2(t)$ are bounded for all $t.$  Let $L_1(t):=R_1(t) \cap \mathbb{Z}^{n_1+n_2}, L_2(t):=\\ R_2(t) \cap \mathbb{Z}^{n_2},$ the set of lattice points in $R_1(t)$ and $R_2(t),$ respectively. 

Let 
\begin{displaymath} L_3(t):=\left \{ \mathbf{x_2} \in L_2(t) \mid  \# \left\{ \mathbf{x_3} \in \mathbb{Z}^{n_1} \mid (\mathbf{x_2} \oplus \mathbf{x_3}) \in L'_1(t) \right \}  < m  \right \},\end{displaymath}
or the set of $\mathbf{x_2}$ in $L_2(t)$ which is the image of less than $m$ points when projecting $L_1(t)$ onto $\mathbb{R}^{n_2}$ (using the last $n_2$ coordinates).

For all (positive integers) $t$ and $\ell,$ let $f_\ell(t)$ be $-\infty$ if $|L_3(t)|<\ell$ or the $\ell^{\text{th}}$ largest value with multiplicity of $\mathbf{c}^{\intercal}(t) \mathbf{x_2}$ for $\mathbf{x_2}$ in $L_3(t)$ otherwise, and let $g(t)$ be $|L_3(t)|.$ Then for all $\ell,$ $f_\ell$ and $g$ are eventually quasi-polynomial.
\end{theorem}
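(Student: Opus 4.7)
The strategy is to reduce to Theorems~\ref{Chen} and~\ref{Shen}, handling the implicit existential quantifier in the definition of $L_3(t)$ by a canonical-representative argument. I would first show $g(t)=|L_3(t)|$ is EQP, and then adapt the same machinery with Theorem~\ref{Shen} in place of Theorem~\ref{Chen} to extract $f_\ell$.

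Decompose $g(t) = |L_2(t)| - n_{\ge m}(t)$, where $n_{\ge m}(t)$ is the number of $\mathbf{x_2}\in L_2(t)$ admitting at least $m$ distinct preimages $\mathbf{x_3}$ with $(\mathbf{x_2},\mathbf{x_3})\in L_1(t)$. The term $|L_2(t)|$ is EQP by Theorem~\ref{Chen}. To count $n_{\ge m}(t)$, I would associate to each such $\mathbf{x_2}$ its unique canonical tuple $(\mathbf{y}_1,\ldots,\mathbf{y}_m)$ of the $m$ lexicographically smallest preimages. The conditions ``$\mathbf{x_2}\in L_2(t)$, $(\mathbf{x_2},\mathbf{y}_i)\in L_1(t)$ for each $i$, and $\mathbf{y}_1<_{\mathrm{lex}}\cdots<_{\mathrm{lex}}\mathbf{y}_m$'' decompose into a disjunction, over the index of first disagreement in each consecutive pair, of conjunctions of linear inequalities whose coefficients are polynomial in $t$. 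Applying Theorem~\ref{Chen} to each disjunct and summing, the number of tuples satisfying these constraints (without yet enforcing canonicity) is EQP.

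The main obstacle is the canonicity condition: no valid preimage $\mathbf{y}'$ satisfies $\mathbf{y}'<_{\mathrm{lex}}\mathbf{y}_m$ with $\mathbf{y}'\notin\{\mathbf{y}_1,\ldots,\mathbf{y}_{m-1}\}$. This is a negated existential, not directly captured by Theorem~\ref{Chen}. I plan to break it into a finite conjunction of emptiness assertions: for each coordinate $j$ at which the hypothetical witness $\mathbf{y}'$ first falls below $\mathbf{y}_m$ and each choice of which $\mathbf{y}_i$ (if any) the witness matches on coordinates $1,\ldots,j-1$, the set of bad $\mathbf{y}'$ is the lattice-point set of an explicit parametric polytope $Q$ whose defining data are polynomial in $(\mathbf{x_2},\mathbf{y}_1,\ldots,\mathbf{y}_m,t)$. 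Canonicity then reads ``$|Q|=0$ for every case.''

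The crux is to show that the joint locus of $(\mathbf{x_2},\mathbf{y}_1,\ldots,\mathbf{y}_m)$ where every such $Q$ is empty is itself a finite union of sets carved out by parametric linear inequalities, so that Theorem~\ref{Chen} in the enlarged parameter space yields $n_{\ge m}(t)$ as EQP. I plan to establish this by induction on $n_1$: when $n_1=0$, emptiness of $Q$ is a direct linear condition on the defining inequalities; in the inductive step, for each residue class of the fiber variables modulo an appropriate period, slicing $Q$ along its first coordinate reduces the fiber dimension by one and allows the inductive hypothesis to apply. For $f_\ell(t)$, run the same construction using Theorem~\ref{Shen} in place of Theorem~\ref{Chen} at the final step, then extract the $\ell^{\mathrm{th}}$ largest value of $\mathbf{c}^\intercal(t)\mathbf{x_2}$ over $L_3(t)$ by merging the finitely many EQP value sequences obtained from Theorem~\ref{Shen} applied to $L_2(t)$ and to the canonical-tuple representation of $L_2(t)\setminus L_3(t)$; on each residue class of $t$ the ordering between these polynomial sequences eventually stabilizes, so the $\ell^{\mathrm{th}}$ largest value is itself EQP.
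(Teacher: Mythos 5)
Your reduction $g=|L_2|-n_{\ge m}$ and the canonical-tuple encoding of ``at least $m$ preimages'' are fine as far as they go, but the step you yourself flag as the crux is exactly the hard core of the theorem, and the sketch you give for it does not go through. You need that the locus where the fiber set $Q$ (lattice points of a polytope whose defining data depend on $\mathbf{x_2},\mathbf{y}_1,\ldots,\mathbf{y}_m$ and polynomially on $t$) is empty is a finite union of sets cut out by parametric linear inequalities. Your proposed induction on $n_1$ --- ``for each residue class of the fiber variables modulo an appropriate period, slice along the first coordinate'' --- is a Presburger-style elimination, and it works only when the coefficients of the fiber variables are fixed integers. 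Here the fiber coefficients are entries of $A_1(t)$, i.e.\ nonconstant polynomials in $t$: eliminating one integer fiber variable produces divisibility/floor conditions whose moduli grow with $t$, so the ``appropriate period'' and the number of residue-class cases are not bounded independently of $t$, and the resulting description is not a finite boolean combination of parametric inequalities to which Theorem~\ref{Chen} or \ref{Shen} applies. The paper flags precisely this obstruction (end of Section~\ref{sec-ex}, ``General Problem'') and gets around it by first rewriting every coordinate in base $t$ so that, after Proposition~\ref{bt1}, the constraint matrix acting on the new digit variables is \emph{constant}; only then can one eliminate the remaining linear dependencies (the tree of parametric boxes in Section~\ref{sec-ep}), solve the resulting full-rank systems explicitly, and handle integrality of the solved coordinates with auxiliary remainder indeterminates. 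Without an ingredient playing the role of that base-$t$ reduction, your crux lemma is unproved, and it is not a routine application of the quoted theorems.

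A second, independent problem is your treatment of $f_\ell$. The $\ell^{\text{th}}$ largest value of $\mathbf{c}^\intercal(t)\mathbf{x_2}$ over $L_3(t)=L_2(t)\setminus\bigl(L_2(t)\setminus L_3(t)\bigr)$ is not determined by finitely many order statistics of $L_2(t)$ and of $L_2(t)\setminus L_3(t)$: the top element of $L_3(t)$ may sit at rank roughly $|L_2(t)\setminus L_3(t)|$ inside $L_2(t)$, and that rank grows with $t$, so ``merging finitely many EQP value sequences'' cannot recover it. To get the order statistics you need a direct parametric description of a set in objective-preserving bijection with $L_3(t)$ itself (this is what the paper's auxiliary program $Q^*$ provides), not just of its complement inside $L_2(t)$; if your crux lemma were available, you could repair this by encoding, for each $m'<m$, the full sorted list of all $m'$ preimages together with the emptiness condition ruling out further preimages, and then applying the disjunction-extended Theorem~\ref{Shen} to that description. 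As written, though, both the crux lemma and the $f_\ell$ extraction are genuine gaps.
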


In this paper, $\mathbf{v_1} \oplus \mathbf{v_2}$ means the vector which has the coordinates of $\mathbf{v_1}$ first then those of $\mathbf{v_2}$. 

As is the case for PILPs, one could formulate different versions of Theorem \ref{main2} by switching to $A_1(t) \mathbf{x_1} \le \mathbf{b_1}(t)$ and/or $A_2(t) \mathbf{x_2} = \mathbf{b_2}(t),$ but we do not do so here. The inequality conventions here are chosen to resemble the Frobenius problem. We say that the parametric exclusion problem shown in \ref{main2} is given by positive integers $m, n_1, n_2,$ etc. We call $\mathbf{c}^{\intercal}(t) \mathbf{x_2}$ the objective function, $\{f_\ell\}_{\ell}$ or $\{f_\ell\}$ the family of optimum value functions and $g$ the size function.

Later, we show that Theorem \ref{main} follows easily from the above theorem.

\subsection{Notation and Terminology}
Relations between vectors are coordinate-wise. The variables $t, \ell,$ and $m$ are always positive integers. The phrase $t \gg 0$ means ``for sufficiently large t." The constant function at $-\infty$ is to be interpreted as a polynomial. PILP means ``parametric integer linear program." EQP means ``eventually quasi-polynomial" or ``eventual quasi-polynomial." See Definition \ref{deqp} for the meaning of the period and components of an EQP. By the $\ell^{\text{th}}$ largest value, we mean the $\ell^{\text{th}}$ largest value with multiplicity. The symbol $\vee$ denotes logical disjunction, $\wedge$ denotes logical conjunction, and either may be used as a binary operator or with an indexing set. The cardinality of a set $S$ is denoted by $\# S.$ 

When we refer to elements of $\mathbb{R}^m$ or an expression like $(v_1, \ldots, v_m),$ we mean a (column) vector, especially in matrix operations. In general, $\mathbf{x_1}$ denotes the vector $(x_1, \ldots, x_{n_1+n_2}), \mathbf{x_2}=(x_1, \ldots, x_{n_2}),$ and $\mathbf{x_3}=(x_{n_2+1}, \ldots, x_{n_1+n_2}),$ and the same is true for other letters. The symbol $\oplus$ as in $\mathbf{v_1} \oplus \mathbf{v_2}$ means concatenation of vectors. 

A parametric inequality specifically has the form $\mathbf{a}^{\intercal}(t) \mathbf{z} \le b(t),$ where $\mathbf{a}$ is a $\mathbb{Z}[u]$-vector of the correct dimension, $b$ is in $\mathbb{Z}[u],$ and $\mathbf{z}$ refers to a vector of \textit{integer} indeterminates e.g. $\mathbf{x_1}.$ A parametric equation is the same with equality instead of ``$\le$." 

We say that the parametric exclusion problem shown in Theorem \ref{main2} is given by positive integers $m, n_1, n_2,$ etc. We call $\mathbf{c}^{\intercal}(t) \mathbf{x_2}$ the objective function, $\{f_\ell\}_{\ell}$ or $\{f_\ell\}$ the family of optimum value functions and $g$ the size function.

\section{Some Examples of Parametric Exclusion}
\label{sec-ex}
In this section, we discuss concrete examples of parametric exclusion and ideas which are used to simplify these examples. These ideas are applicable to the main problem. 

\textbf{Example 1.} We consider a continuous (real) version of the parametric exclusion problem in which the parameter $t$ is still a positive integer. Let
\begin{align*}R_1(t) &:= \{(x_1, x_2) \mid t/2 \le x_1 + x_2 \le t \wedge t/2 \le x_1-x_2 \le t/2 \}, \\
R_2(t) &:= \{x_2 \mid 0 \le x_2 \le t\}, \\
R_3(t) &:= \{x_2 \in R_2(t) \mid \{x_1 \in \mathbb{R} \mid (x_1, x_2) \in R_1(t)\} = \varnothing\}.
\end{align*}

For fixed $t$, we are projecting a two-dimensional region onto a one-dimensional region. The image of the two-dimensional region is the same as the image of its boundary. The boundary is the union of finitely many one-dimensional regions, which are simpler to understand. Moreover, these edges change in a ``polynomial way'' with respect to $t.$ To be specific, the edges of $R_2(t)$ are 
\begin{align*}(0, t/2) - (t/2, 0), \> (t/2, 0) - (3t/4, t/4), \> (3t/4, t/4) - (t/4, 3t/4), \> (t/4, 3t/4) - (0, t/2).
\end{align*}

Their projections onto the $c_2$ coordinate are $[0, t/2], [0, t/4], [t/4, 3t/4], [t/2, 3t/4],$ respectively. The set $R_3(t) = (3t/4, t]$ for all $t.$ Maximizing an objective function over this region of the parametric type in Theorem \ref{main2} is straightforward.

\textbf{Example 2.} We consider a higher-dimensional version of the previous problem. Let 
\begin{align*}R_1(t) &:= \{(x_1, x_2, x_3) \mid (x+y+z \ge 0) \wedge (2x+y-z\ge t) \wedge (x-y+2z \ge t) \wedge (-x+2y+z\ge t)\}, \\
R_2(t) &:= \{(x_2,x_3) \mid 0 \le x_2, x_3 \le t\}, \\
R_3(t) &:= \{(x_2, x_3) \in R_2(t) \mid \{x_1 \in \mathbb{R} \mid (x_1, x_2, x_3) \in R_1(t)\} = \varnothing\}.
\end{align*}

One can check that $R_1(t)$ is a tetrahedron with vertices $(t/2, t/2, t/2), (5t/7, -4t/7, -t/7),$\\$ (-4t/7, -t/7, 5t/7),$ and $(-t/7, 5t/7, -4t/7).$ 

We follow the same pattern as the previous example. The projection of a 3-dimensional polyhedron onto two dimensions equals the projection of its boundary. The boundary equals four triangles. The projection of each triangle is also a triangle. We are interested in the region $R_3(t),$ or points in $R_2(t)$ that lie outside of the projections of four triangles. 

One way to determine if a point $(x_2, x_3)$ in a plane lies in the closed triangle with vertices $v_1, v_2, v_3$ is to write $(x_2, x_3)$ as an affine linear combination of the vertices i.e.
\begin{align*}(x_2, x_3) = a_1v_1 + a_2v_2 + a_3v_3, a_1+a_2+a_3=1.\end{align*}

Then $(x_2, x_3)$ lies in the triangle iff $0 \le a_1, a_2, a_3 \le 1.$ The constants $a_1, a_2, a_3$ can be written in terms of $x_2, x_3, v_1, v_2,$ and $v_3.$ 

We are dealing with a parametric version of the problem, so we want to know when a point $(x_2, x_3)$ lies in a triangle as $t$ changes. It turns out that this condition can be written as a finite logical combination of linear inequalities in terms of $x_2, x_3,$ and $t.$ Similarly, the set $R_3(t)$ can be written as a finite logical combination of linear inequalities of $x_2, x_3,$ and $t.$

\textbf{Example 3.} We consider another continuous parametric exclusion problem in which $R_1(t)$ is as in Example $2$ and
\begin{align*}
R_2(t) &:= \{(x_3) \mid 0 \le x_3 \le t\}, \\
R_3(t) &:= \{(x_3) \in R_2(t) \mid \{(x_1, x_2) \in \mathbb{R}^2 \mid (x_1, x_2, x_3) \in R_1(t)\} = \varnothing\}.
\end{align*}

We are now projecting a $3$-D space onto a $1$-Dl space. The projection of a $3$-D polyhedron equals the projection of its boundary, but each face is $2$-D. As an example 1, when we are projecting a $2$-D region onto a $1$-D space, it is simpler to think of just projecting the boundaries of the $2$-D region (even if the $2$-D region lies in a $3$-dimensional ambient space). The projection of a $1$-dimensional region onto a $1$-D space is simple. The polyhedron has finitely many faces, and each face has finitely many edges, so we can reduce the description of $R_3(t)$ into a finite logical combination of linear inequalities in terms of $x_3$ and $t.$

\textbf{Example 4.} Consider a continuous parametric exclusion problem in which $n_1, n_2>1,$ $R_1(t)$ lies in $n_1+n_2$ dimensions, $R_2(t)$ lies in $n_2$ dimensions, and 
\begin{align*}R_3(t):= \{ \mathbf{x_2} \in \mathbb{R}^{n_2} \mid \{\mathbf{x_3} \in \mathbb{R}^{n_1} \mid (\mathbf{x_2} \oplus \mathbf{x_3}) \in R_1(t)\} = \varnothing\}.\end{align*}

We are projecting an $n_1+n_2$-D polytope onto an $n_2$-D space. Ideally, we want to reduce this projection problem to the projection of $n_2$-dimensional simplices. This is because the projection of a simplex is usually a simplex. We can determine when a point $\mathbf{x_2}$ is in an $n_2$-D simplex in $n_2$ dimensions by expressing $\mathbf{x_2}$ as an affine linear combination of the vertices and checking if all of the coefficients lie in $[0, 1].$ It turns out that we can ignore simplices whose projections are degenerate (i.e. not simplices). 

This reduction is possible because of the following facts. The projection of an $m$-D polytope onto an $n_2$-D space where $m>n_2$ equals the projection of the boundary of that polytope onto that $n_2$-D space. A polytope has finitely many polytopes as faces (meaning codimension $1$), iteratively taking boundaries leads to a finite number of $n_2$-dimensional polytopes. A polytope $P$ is the union of all simplices whose vertices are a subset of $P.$ 

Thus $R_3(t)$ can be written as a finite logical comination of linear inequalities in $\mathbf{x_2}$ and $t.$

There is a caveat when the projection of $R_1(t)$ onto the coordinates $\mathbf{x_2}$ does not affinely span the whole space. In this case, we should identify the affine subspace spanned by the projection of $R_1(t).$ This subspace depends on $t$ in a ``polynomial'' way. All points outside of this affine subspace but in $R_2(t)$ are definitely in $R_3(t).$ For points in this subspace, we can proceed similarly as in this example. 

Here is a more concrete argument about why we should distinguish the affine subspace. Suppose that we have four non-coplanar points $\mathbf{v_i}$ in $\mathbb{R}^6,$ and we want to know when another point $\mathbf{v}$ is in the simplex defined by these four points. One way to figure this out is to first figure out if $\mathbf{v}$ is in the affine span of the $\mathbf{v_i}.$ If not, then the answer is obviously no. If so, then we can write $\mathbf{v}$ as an affine linear combination of the $\mathbf{v_i}$ uniquely. Then the answer is yes iff all coefficients are in $[0, 1].$ 

Strictly speaking, one doesn't need to separate these steps. One can find a $3$-D subspace such that the $\mathbf{v_i}$ project to non-coplanar points. The projection of $\mathbf{v}$ can be written as an affine linear combination of the projections of the $\mathbf{v_i}$ uniquely. The answer is yes iff all coefficients are in $[0, 1]$ and $\mathbf{v}$ equals this linear combinatino of the $\mathbf{v_i}.$ However, we are emphasizing the first method because that is the pattern used in the general proof.

The set $J(\alpha, t)$ described in Section \ref{sec-ep} is equal to the affine subspace spanned by the projection of $L_1(t)$ (for $t \gg 0.$) In the most interesting cases, $J(\alpha, t)$ is usually the whole space, but we introduce this affine subspace for completeness.

Henceforth, we will not discuss the continuous parametric exclusion problem in detail.

\textbf{Example 5.} Consider a parametric exclusion problem given by
\begin{align*}L_1(t) &:= \{(x_1, x_2) \in \mathbb{Z}^2 \mid (x_2 \le t) \wedge (3/5 x_2 \le x_1 \le 5/8 x_2)\}, \\
L_2(t) &:= \{x_2 \in \mathbb{Z} \mid 0 \le x_2 \le t\}, \\
L_3(t) &:= \{x_2 \in L_2(t) \mid \# \{x_1 \in \mathbb{Z} \mid (x_1, x_2) \in L_1(t)\} < m\}.
\end{align*}
where $m=1.$ We define $R_1(t)$ in the obvious way. $R_1(t)$ is a triangle. The projection of its boundary equals $[0, t].$ However, the projection of $L_1(t)$ does not equal $0, \ldots, t.$ In particular, there is no integer $x_1$ such that $(x_1, 1) \in L_1(t)$ because there is no integer $x_1$ such that $3/5 \le x_1 \le 5/8.$ 

One of the edges of $R_1(t)$ is from $(0, 0)$ to $(t, 3t/5).$ Its projection is $[0, t],$ but the projection of the lattice points on this edge is the set of multiples of $5.$ This does not give us information about when non-multiples of $5$ are in $L_3(t).$ To study non-multiples of $5,$ we denote the line containing these edge by $p,$ and for $i=1, 2, 3, 4,$ we denote the translation of $p$ by $i/5$ in the $x_1$ direction by $p_i.$

 All integers $a$ in $[0, t]$ are the $x_2$-coordinate of exactly one lattice point on these five lines. This lattice point also turns out to be the lattice point in $L_1(t)$ with $x_2 = a$ and minimal $x_1$ coordinate whenever $a$ is not in $L_3(t).$ In other words, $a$ is in $L_3(t)$ iff the unique lattice point on $p, p_1, p_2, p_3, p_4$ with $x_2=a$ lies in $L_1(t).$ It remains to see if this lattice point lies in $L_1(t).$ 

A full resolution of this example is too long for this section.

\textbf{Example 6.} Consider the same problem as in Example $5$ except $m=2.$ We define the line $p$ as before and $p_5, \ldots, p_9$ similarly to $p_1, \ldots, p_4.$ Then for $a=0, \ldots, t,$ there exist exactly two total lattice points with $x_2=a$ on these ten lines. These also turn out to be the two lattice points in $L_1(t)$ with minimal $x_2=a$ and minimal $x_1$ coordinate, whenever $z$ is not in $L_3(t).$ It remains to check if both of these points lie in $L_1(t).$

\textbf{General Problem.} Consider a parametric exclusion problem in which the matrix $A_1$ does not have constant entries. (Note that in all of the previous examples, $A_1$ and $A_2$ had constant entries.) Now, the region $R_3(t)$ may be a polytope whose faces have ``slopes'' that change as a function of $t.$ Therefore, we probably cannot extend the argument relating to $p, p_1, p_2, p_3, p_4$ in example $5$ to the general problem directly. This is because we would need a polynomial number of parallel hyperplanes, and we can say much less about PILPs with a polynomial number of constraints. Therefore, our argument first reduces the parametric exclusion problem to an exclusion-type problem in which the matrix has constant entries.

\section{Outline of Proof Method for an Example of the Parametric Frobenius Problem}
\label{sec-frob-ex}
In this section, we discuss a concrete example of the parametric Frobenius problem. The argument in this section follows a different order than the general proof, but it should offer some insight into the proof that the parametric exclusion problem is EQP. In this section, we are taking $t$ to be ``sufficiently large.''

Let $P_1(t) = t, P_2(t) = t^2+1, P_3(t) = t^2+2t-1.$ Let $D(t) = F(P_1(t), P_2(t), P_3(t)).$ One can prove that $0 \le D(t) < t^3$ (for $t \gg 0$). Therefore, $D(t)$ can be written as a $3$-digit number in base $t$ (with possible leading zeroes). Also, for all $t$ and integers $x$ less than $t^3,$ the only possible nonnegative linear combinations 
\begin{align*}x = a_1 P_1(t) + a_2 P_2(t) + a_3 P_3(t) \end{align*}
have $0 \le a_1 < t^2$ and $0 \le a_2, a_3 < t.$ Therefore, we can rephrase this instance of the parametric Frobenius problem as follows:
\\

For all $t,$ $D(t)$ is the maximum value of $c_2 t^2 + c_1 t + c_0$ subject to the constraints $c_0, c_1, c_2 \in \{0, \ldots, t-1\}$ and there do not exist $b_{11}, b_{10}, b_2, b_3$ in $\{0, \ldots, t-1\}$ such that
\begin{align}\label{ex} (b_{11} t + b_{10})P_1(t) + b_2 P_2(t)+b_3P_3(t) = c_2 t^2 + c_1 t + c_0.\end{align}

A priori, it is not obvious why we want to consider the digits separately. The reason is roughly explained after equation $($\ref{add_exp}$)$.

Let $\mathbf{c} = (c_0, c_1, c_2)$ and $\mathbb{P}(\mathbf{c}, t)$ be the logical proposition that for all $b_{11}, b_{10}, b_2, b_3$ in $\{0, \ldots, t-1\},$ $(b_{11} t + b_{10})P_1(t) + b_2 P_2(t)+b_3P_3(t) \neq c_2 t^2 + c_1 t + c_0.$ We want to find a simpler description of the logical proposition $\mathbb{P}(\mathbf{c}, t).$ 

Our general strategy is to try to solve for the $b_i$ on the left hand side of equation $($\ref{ex}$)$ in terms of $\mathbf{c}$ and $t.$ We can interpret the left hand side as having three powers of $t,$ namely $1, t, t^2,$ with coefficients in $\{0, \ldots, t-1\}.$ Roughly speaking, the equation $($\ref{ex}$)$ is slightly weaker than having three linear equations in four unknowns $b_i,$ and we want to eliminate one of the unknowns.

Since there are only three powers of $t$ on the left hand side and four variables, there is a $\mathbb{Q}$-linear dependency i.e. a nonzero integer vector $(d_{11}, d_{10}, d_2, d_3)$ such that adding this vector to $(b_{11}, b_{10}, b_2, b_3)$ leaves the left hand side unchanged. In this case, one such vector is $(-2, -2, 1, 1)$ because $(-2t-2)P_0(t)+(1)P_1(t)+(1)P_2(t)=0.$

If $\mathbb{P}(\mathbf{c}, t)$ is false, then this invariant translation shows that there exist $b_{11}, b_{10}, b_2, b_3$ in $\{0, \ldots, t-1\}$ satisfying the equation $($\ref{ex}$)$ such that $b_{11} < 2, b_{10} < 2, b_2 > t-2,$ or $b_3 >t-2.$ (This is because we can repeatedly translate to get another quadruple that satisfies the equation; the last translation before leaving $\{0, \ldots, t-1\}^4$ satisfies one of these four inequalities.)

Let $\mathbb{P}_{b_{11} = 0}(\mathbf{c}, t)$ be the logical proposition that there exist $b_{11}, b_{10}, b_2, b_3$ in $\{0, \ldots, t-1\}$ such that $b_{11} = 0$ and $(b_{11} t + b_{10})P_1(t) + b_2 P_2(t)+b_3P_3(t) = c_2 t^2 + c_1 t + c_0.$ Define $\mathbb{P}_{b_{11} = 1},\mathbb{P}_{b_{10} = 0},\mathbb{P}_{b_{10} = 1},\mathbb{P}_{b_{2} = t-1},$ \\ $\mathbb{P}_{b_{3} = t-1}$ similarly. Then the previous paragraph implies the following logical equivalence:
\begin{align}\label{equiv1}\mathbb{P}(\mathbf{c}, t) \Leftrightarrow \neg \left(\mathbb{P}_{b_{11} = 0} \vee \mathbb{P}_{b_{11} = 1}\vee\mathbb{P}_{b_{10} = 0}\vee\mathbb{P}_{b_{10} = 1}\vee\mathbb{P}_{b_{2} = t-1}\vee\mathbb{P}_{b_{3} = t-1}\right).
\end{align}

Consider just the case $b_{11} = 1.$ We want to find a simpler description of the logical proposition $\mathbb{P}_{b_{11} = 1}(\mathbf{c}, t).$ When we fix $b_{11} = 1,$ equation $($\ref{ex}$)$ is 
\begin{align}\label{ex2} t^2 + b_{10} t + b_2 (t^2+1) + b_3 (t^2+2t-1) = c_2 t^2 + c_1 t + c_0. \end{align}

There are 3 $b_i$ remaining on the left hand side of the equation, so we can almost solve for the $b_i$ in terms of $\mathbf{c}$ and $t.$ To understand the situation, let us write the addition equation $($\ref{ex}$)$ vertically in base $t$. 
\begin{align*}
\begin{array}{crrr}
&(d_2)&(d_1)&\\
&1&0&0 \\
&0&b_{10}&0\\
&b_2&0&b_2\\
+&b_3&2 b_3&-b_3\\ \hline
&c_2&c_1&c_0
\end{array}
\end{align*}

In this addition problem, $d_2$ and $d_1$ are carries, which are possibly negative. To be precise, 
\begin{align} \label{add_exp} \begin{split}
b_2 - b_3 &= c_0 + d_1 t, \\
 d_1 + b_{10} + 2 b_3 &= c_1 + d_2 t, \\
  d_2 + 1 + b_2 + b_3 &= c_2.\end{split}\end{align}

Since the variables $b_{10}, b_2, b_3, c_2, c_1, c_0$ are constrained to lie in $[0, t),$ we see that $-2 < d_1 < 1$ and $-1 \le d_2 \le 3.$ In particular, there are at most $10$ different possible values of $(d_1, d_2)$ no matter how large $t$ is. Conceptually, we have shown that single equation $($\ref{ex2}$)$ in three unknowns is slightly weaker than having three linear equations in those three unknowns. This fact is the main reason why we consider the base $t$ representations of the $a_i$ introduced near the beginning of this section. By constraining the digits to the range $[0, t),$ we can constrain the carries to a finite set which is independent of $t.$ If we did not have a finite set of carries (independent of $t$), then conceptually, the single equation $($\ref{ex2}$)$ would be much weaker than having three linear equations in three unknowns.

Let $\mathbb{P}_{b_{11} = 0, d_1 = -1, d_2 = -1}(\mathbf{c}, t)$ be the logical proposition that there exist $b_{11}, b_{10}, b_2, b_3$ in $\{0, \ldots, t-1\}$ such that $b_{11} = 0$ and the equations $($\ref{add_exp}$)$ hold. The previous argument implies the following logical equivalence:
\begin{align}\label{equiv2}\mathbb{P}_{b_{11} = 1}(\mathbf{c}, t) \Leftrightarrow \bigvee_{e_1, e_2 \in S} \mathbb{P}_{b_{11} = 0, d_1 = e_1, d_2 = e_2}(\mathbf{c}, t),
\end{align}
where $S$ is a finite set.

Now consider just the case $b_{11} = 1, d_1 = 0, d_2 = 1.$ We want to find a simpler description of the logical proposition $\mathbb{P}_{b_{11} = 1, d_1 = 0, d_2 = 1}(\mathbf{c}, t).$  Since we have three independent linear equations in three unknowns, we can solve for the $b_i$ in terms of $\mathbf{c}$ and $t.$ The result is
\begin{align}
\label{add_solve}
\begin{split}b_{10} &= c_0 + d_1 t + c_1 + d_2 t - c_2 - d_1 + d_2 + 1, \\
 b_2 &= (c_0 + d_1 t + c_2 - d_2 - 1)/2, \\
 b_3 &= (-c_0 - d_1 t + c_2 - d_2 - 1)/2,\end{split}\end{align}
 where $d_1 = 0$ and $d_2 = 1.$

In particular, these are rational linear combinations of $\mathbf{c}$ and $t.$ Because there is only one solution for $b_{10}, b_2, b_3$ over real numbers, we know that the proposition $\mathbb{P}_{b_{11} = 1, d_1 = 0, d_2 = 1}(\mathbf{c}, t)$ is equivalent to the proposition that this unique solution lies in $\{0, \ldots, t-1\}^3.$ In other words, we have the following logical equivalence:
\begin{align}\label{equiv3}\mathbb{P}_{b_{11} = 1, d_1 = 0, d_2 = 1}(\mathbf{c}, t) \Leftrightarrow 
\left(\begin{array}{ccccc}
b_{10} \ge 0 &\wedge& b_{10} \le t-1 &\wedge& b_{10} \in \mathbb{Z}  \\ 
b_{2} \ge 0 &\wedge &b_{2} \le t-1 &\wedge& b_{2} \in \mathbb{Z}\\
b_{3} \ge 0 &\wedge &b_{3} \le t-1 &\wedge &b_{3} \in \mathbb{Z}\end{array}\right).
\end{align}
Here, $b_{10}, b_2, b_3$ are shorthand for the equations $($\ref{add_solve}$)$. For example, all instances of $b_{10}$ in equation $($\ref{equiv3}$)$ should be replaced by $c_0 + c_1 + t - c_2 + 2.$ It is easy to see that the first two columns on the right hand side can be written as parametric inequalities of the integer indetermintates $\mathbf{c}.$ Unfortunately, the last column cannot. This issue requires introducing new integer indeterminates that act as the remainder term and imposing the constraints that the remainder equals $0.$ 

If we ignore the integrality issue, then we have shown that the proposition $\mathbb{P}_{b_{11} = 1, d_1 = 0, d_2 = 1}(\mathbf{c}, t) $ is equivalent to a finite logical combination of parametric inequalities in $\mathbf{c}.$ If we do not ignore this issue, then we have shown that $\mathbb{P}_{b_{11} = 1, d_1 = 0, d_2 = 1}(\mathbf{c}, t) $ is equivalent to the proposition that $\mathbf{c}$ is in the projection of a PILP given by a finite logical combination of parametric inequalities. This projection is injective, so it is really very straightforward to understand. We do not describe it in detail here.

Together, the equivalences $($\ref{equiv1}$)$ and $($\ref{equiv2}$)$ with $($\ref{equiv3}$)$ prove that the proposition $\mathbb{P}(\mathbf{c}, t)$ is equivalent to a finite logical combination of parametric inequalities in $\mathbf{c}$ (if we ignore the integrality issue). This logical combination involves negations, disjunctions, and conjunctions. One can show that the negation of a parametric inequality is a parametric inequality. Using De Morgan's laws and logical distribution, one can expand this logical expression into a finite disjunction of finite conjunctions of parametric inequalities. 

This expansion tells us that the set of integer vectors $\mathbf{c}$ that satisfy the constraints $0 \le c_i \le t-1$ and $\mathbb{P}(\mathbf{c}, t)$ is a union of finitely many sets $K(\alpha, t)$ of the form in Theorem \ref{Shen}. Here, $\alpha$ is indexed by a finite set which is independent of $t.$

Recall that we want to describe the function $D(t),$ and we showed that for $t>1,$ $D(t)$ is the maximum of the objective function $c_2 t^2 + c_1 t + c_0$ over this feasible set. For all $t,$ this feasible set is a union of finitely many sets of the form $K(\alpha, t).$ For all $t,$ the maximum of an objective function over this union equals the maximum of the maxima of the objective function over the components. By Theorem \ref{Shen}, the maximum of the objective function over $K(\alpha, t)$ is an EQP function of $t.$ One can show that the maximum of finitely many EQP functions is EQP. Therefore, $D(t)$ is EQP. This completes the outline of the proof for this example.

\subsection{Remarks on the general proof}
In general, eliminating just one linear dependency is not enough. We must iteratively eliminate linear dependencies. We keep track of the order in which we eliminate dependencies, so this leads us to a concept of a large rooted tree whose paths from the root to leaves encode information about the order in which we eliminate dependencies. When we finish eliminating linear dependencies, we find that the unknowns on the left hand side are uniquely determined by the analog of $\mathbf{c}$ and $t.$ 


Because we are counting preimages in the general problem, not just figuring out when there is at least $1$ preimage, it is helpful to set up the cases so that they are mutually exclusive. In contrast, in equivalence $($\ref{equiv1}$)$ above, the clauses on the right hand side are not mutually exclusive.


\section{Base $t$ Representations}
\label{sec-btr}
In this section, we formulate a theorem that resembles Theorem \ref{main2} and show that it implies Theorem \ref{main2}. We prove this new theorem in the next two sections. This theorem is based on the idea of expressing the indeterminates in base $t.$ Despite its appearance, it is much more tractable than Theorem \ref{main2}.

\begin{theorem} \label{main3}Let $m, n_1, n_2, p, q$ be positive integers. Let $\mathbf{c}$ be in $\mathbb{Z}[u]^{n_2},$ $A_1$ be in $\mathbb{Z}^{q \times (n_1 + n_2)}$ (not $\mathbb{Z}[u]^{q \times (n_1+n_2)}$), $A_2$ be in $\mathbb{Z}[u]^{p \times n_2},$ $\mathbf{b_2}$ be in $\mathbb{Z}[u]^{p},$ $\mathfrak{S}$ be a finite set, and $\mathbf{b(\alpha)}$ be in $\mathbb{Z}[u]^q$ for all $\alpha$ in $\mathfrak{S}.$

Let the indeterminates be $y_1, \ldots, y_{n_1+n_2}.$ Let $\mathbf{y_1}=(y_1, \ldots, y_{n_1+n_2}), \mathbf{y_2}=(y_1, \ldots, y_{n_2}),$ and $\mathbf{y_3}=(y_{n_2+1}, \ldots, y_{n_1+n_2}).$

For all $t,$ let 
\begin{displaymath} R_2(t):=\{\mathbf{y_2} \in \mathbb{R}^{n_2} \mid A_2(t) \mathbf{y_2} \le \mathbf{b_2}(t)\}. \end{displaymath}

For all $t$ and $\alpha$ in $\mathfrak{S},$ let
\begin{displaymath} K(\alpha, t):=\{\mathbf{y_1} \in \{0, \ldots, t-1\}^{n_1+n_2} \mid A_1 \mathbf{y_1} = \mathbf{b(\alpha)}(t)\}. \end{displaymath}

Assume, for all $t,$ that $R_2(t)$ is a subset of $[0, t-1]^{n_2}$ and the sets $K(\alpha, t)$ for $\alpha$ in $\mathfrak{S}$ are disjoint. Let $L_2(t)$ be the set of lattice points in $R_2(t).$

Let 
\begin{displaymath} L_3(t):=\left \{ \mathbf{y_2} \in L_2(t) \mid  \# \left\{ \mathbf{y_3} \in \{0, \ldots, t-1\}^{n_1} \mid (\mathbf{y_2} \oplus \mathbf{y_3}) \in \sqcup_{\alpha \in \mathfrak{S}} K(\alpha, t) \right \}  < m  \right \},\end{displaymath}

For all $t$ and $\ell,$ let $f_\ell(t)$ be $-\infty$ if $|L_3(t)|<\ell$ or the $\ell^{\text{th}}$ largest value of $\mathbf{c}^{\intercal}(t) \mathbf{y_2}$ for $\mathbf{y_2}$ in $L_3(t)$ otherwise, and let $g(t)$ be $|L_3(t)|.$ Then for all $\ell,$ $f_\ell$ and $g$ are eventually quasi-polynomial.
\end{theorem}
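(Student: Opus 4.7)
My plan is to show that $L_3(t)$ can be expressed, after restricting $t$ to a finite set of residue classes modulo a fixed integer $D$, as a finite disjoint union of PILP-feasible sets---each defined by a conjunction of parametric linear inequalities in $y_2$ and $t$. Once this is done, Theorem \ref{Chen} gives EQP-ness of $g$ piece-by-piece, Theorem \ref{Shen} gives EQP-ness of each $f_\ell$ piece-by-piece, and the standard facts that a finite sum of EQPs is EQP, together with the routine observation that the $\ell^{\text{th}}$ largest of a finite collection of EQP-valued multisets is EQP, combine these into global EQP-ness.

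Since the $K(\alpha,t)$ are pairwise disjoint, I first decompose $L_3(t) = \bigsqcup_k L_3^{(k)}(t)$ over the finite ($t$-independent) index set $\{k = (k_\alpha)_{\alpha \in \mathfrak{S}} : \sum_\alpha k_\alpha < m\}$, where $L_3^{(k)}(t) := \{y_2 \in L_2(t) : N_\alpha(y_2,t) = k_\alpha \text{ for all } \alpha\}$ and $N_\alpha(y_2,t)$ is the number of $y_3 \in \{0,\ldots,t-1\}^{n_1}$ with $(y_2 \oplus y_3) \in K(\alpha,t)$. It then suffices to establish the desired structural description for each $L_3^{(k)}(t)$.

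To parameterize $K(\alpha,t)$, I exploit the key hypothesis that $A_1$ is a \emph{constant} integer matrix, and apply its Smith normal form $A_1 = U S V$ with $U \in GL_q(\mathbb{Z}), V \in GL_{n_1+n_2}(\mathbb{Z})$, and $S = \mathrm{diag}(d_1,\ldots,d_r,0,\ldots,0)$. The change of variables $z = V^{-1} y_1$ turns $A_1 y_1 = b(\alpha)(t)$ into a diagonal system $d_i z_i = (U^{-1}b(\alpha))_i(t)$ for $i \le r = \mathrm{rank}(A_1)$, together with vanishing conditions $(U^{-1}b(\alpha))_i(t) = 0$ for $i > r$. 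Both types of conditions are eventually periodic in $t$, so a finite residue-class analysis modulo a fixed $D$ makes them hold identically within each class; each determined $z_i$ is then an integer-valued polynomial $\sigma_i^\alpha(t)$, while $z_{r+1},\ldots,z_{n_1+n_2}$ are free integer parameters constrained by the parametric box $0 \le V z \le t-1$. Projecting onto $y_2 = (Vz)_{[1:n_2]}$ and applying a second Smith decomposition expresses each fiber $F(\alpha,y_2,t) = \{y_3 : (y_2 \oplus y_3) \in K(\alpha,t)\}$ as the lattice-point set of a polytope whose shape is fixed but whose translating vertex depends affinely on $(y_2,t)$.

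The main obstacle is converting the counting condition $N_\alpha(y_2,t) = k_\alpha$ into a boolean combination of parametric linear inequalities in $(y_2,t)$. The direction $N_\alpha \ge k_\alpha$ is naturally expressed by introducing $k_\alpha$ lex-ordered auxiliary vector variables satisfying the system and projecting; the direction $N_\alpha \le k_\alpha$ requires negating an existential, which I handle by the iterative linear-dependency-elimination scheme outlined in Section \ref{sec-frob-ex}. Any hypothetical $(k_\alpha+1)^{\text{st}}$ preimage can be translated by kernel vectors of the fixed matrix toward a face of the bounding box, pinning some coordinate at a boundary value within a bounded ($t$-independent) distance, after which I iterate on the resulting lower-dimensional subsystem; this case-split terminates with a finite boolean combination of parametric inequalities. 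Converting the result to disjunctive normal form and intersecting over $\alpha$ finishes the structural description of each $L_3^{(k)}(t)$, after which Theorems \ref{Chen} and \ref{Shen} close the argument.
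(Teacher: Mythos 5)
Your overall strategy — re-express $L_3(t)$ via finitely many boolean combinations of parametric linear inequalities in auxiliary integer variables, then invoke Theorems~\ref{Chen} and~\ref{Shen} after disjointifying — is the same as the paper's, and the idea of translating a would-be extra preimage by kernel vectors toward the boundary of the box does capture the essence of the paper's tree of parametric hyperrectangles (Propositions~\ref{child} and~\ref{push}). However there are two genuine gaps, the second of which is fatal as stated.

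First, applying the Smith normal form $A_1 = USV$ with a \emph{column} change $z = V^{-1}\mathbf{y_1}$ mixes the projection coordinates $\mathbf{y_2}$ with the fiber coordinates $\mathbf{y_3}$: after the change of variables, the fiber over a fixed $\mathbf{y_2}$ is no longer a coordinate slice, the box $\{0,\ldots,t-1\}^{n_1+n_2}$ becomes the skewed region $0 \le Vz \le t-1$, and the description of $F(\alpha,\mathbf{y_2},t)$ in $z$-coordinates adds $n_2$ further linear constraints rather than simplifying. The paper deliberately uses only a left multiplication $D$ (row operations) producing the block form $\bigl(\begin{smallmatrix}B_1 & 0\\ B_2 & B_3\end{smallmatrix}\bigr)$, which preserves the $\mathbf{y_2}/\mathbf{y_3}$ split and keeps $\{0,\ldots,t-1\}^{n_1}$ an honest box; your hyperrectangle/boundary argument relies on that box structure.

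Second, and more seriously, your residue-class analysis modulo a fixed $D$ only handles integrality conditions whose data are polynomials in $t$ alone. After you project onto $\mathbf{y_2}$, the coordinates determined by the ``second Smith decomposition'' are \emph{rational} affine functions of $(\mathbf{y_2}, t)$, and whether such a quantity is an integer depends on $\mathbf{y_2}$, not just on $t \bmod D$. This cannot be expressed as a parametric linear inequality in $(\mathbf{y_2}, t)$. The paper's key device here is introducing new integer indeterminates $Y_{\alpha,H,u,1}, Y_{\alpha,H,u,2}$ tied together by a quotient-remainder constraint $T\,\mathbf{Y_3}(\alpha,H)(\mathbf{y_2},t)_u = Y_{\alpha,H,u,1}T + Y_{\alpha,H,u,2}$ with $0 \le Y_{\alpha,H,u,2} < T$, so that ``$Y_u(t) \in \mathbb{Z}$'' becomes ``$Y_{\alpha,H,u,2}=0$''. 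Your proposal has no analogue of this, so the constraint set you ultimately construct would not be a boolean combination of parametric linear inequalities and Theorems~\ref{Chen}/\ref{Shen} would not apply. (A lesser issue: your exact-count decomposition into $L_3^{(k)}$ with $N_\alpha = k_\alpha$ requires establishing both a lower and an upper bound on each fiber count; the paper only needs the upper bound $<m$, and achieves the counting via a tree whose children are carefully made disjoint — condition \textbf{(b)} in the definition of $H_{v,k}$ — a subtlety that your sketch does not address.)
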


The idea is that excluding the projection of sets like $K(\alpha, t)$ is much simpler to understand when the matrix has constant entries, even if we must accept having to deal with more than one of these sets. 

\begin{proposition} \label{m3m2} Theorem \ref{main3} implies Theorem \ref{main2} \end{proposition}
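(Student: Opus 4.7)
The strategy is to encode each integer coordinate of points in $L_1(t)$ and $L_2(t)$ by its base-$t$ digits, thereby converting the polynomial-coefficient matrix $A_1(t)$ of Theorem \ref{main2} into a constant-coefficient matrix at the cost of enumerating finitely many ``carry patterns.'' Since $R_1(t)$ and $R_2(t)$ are bounded polytopes whose vertices are obtained by Cramer's rule from the defining $\mathbb{Z}[u]$-linear systems, each vertex coordinate is a rational function of $t$ and hence, for some constant $D$ and all $t \gg 0$, every coordinate of a point of $L_1(t)$ or $L_2(t)$ lies in $[0, t^{D+1})$. Each such integer $x_i$ then admits a unique expansion $x_i = \sum_{j=0}^{D} y_{i,j} t^j$ with digits $y_{i,j} \in \{0,\ldots,t-1\}$. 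I would collect the digits of $\mathbf{x_2}$ and of $\mathbf{x_3}$ into the $\mathbf{y_2}$ and $\mathbf{y_3}$ vectors required by Theorem \ref{main3}, of dimensions $(D+1)n_2$ and $(D+1)n_1$ respectively.

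Next I would rewrite $A_1(t)\mathbf{x_1} = \mathbf{b_1}(t)$ in terms of the digits. Substituting and expanding the polynomial entries of $A_1(t)$, the $k$-th row becomes $\sum_\ell e_{k,\ell}\,t^\ell = \sum_\ell b_{1,k,\ell}\,t^\ell$, where each $e_{k,\ell}$ is a fixed integer linear combination of the $y_{i,j}$'s (with constant coefficients) and each $b_{1,k,\ell}$ is a fixed integer. This single scalar equation at value $t$ is equivalent to the existence of integer carries $c_{k,\ell}$ with $e_{k,\ell} + c_{k,\ell-1} = b_{1,k,\ell} + t\, c_{k,\ell}$ and appropriate zero boundary conditions. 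Since each $e_{k,\ell}$ is $O(t)$, a straightforward induction on $\ell$ bounds every $c_{k,\ell}$ by a constant independent of $t$, so the set $\mathfrak{S}$ of admissible carry vectors is finite. For each $\alpha \in \mathfrak{S}$, the carry equations rearrange to $A_1^{\mathrm{const}}\mathbf{y_1} = \mathbf{b}(\alpha)(t)$ with constant integer coefficient matrix $A_1^{\mathrm{const}}$ and right-hand side polynomial (of degree at most $1$) in $t$. Because the carry vector is uniquely determined by $\mathbf{y_1}$, the sets $K(\alpha, t)$ are pairwise disjoint, and their union is exactly the set of digit encodings of points of $L_1(t)$.

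Finally, the constraint $\mathbf{x_2} \ge 0 \wedge A_2(t)\mathbf{x_2} \le \mathbf{b_2}(t)$ combined with the digit-range bounds $0 \le y_{i,j} \le t-1$ translates into a parametric linear inequality system in $\mathbf{y_2}$, defining the new $R_2(t)$ of Theorem \ref{main3}; by construction this $R_2(t)$ sits inside $[0,t-1]^{(D+1)n_2}$. The objective $\mathbf{c}^\intercal(t)\mathbf{x_2} = \sum_{i,j} c_i(t)\, y_{i,j}\, t^j$ likewise becomes a parametric linear objective in $\mathbf{y_2}$. Uniqueness of the base-$t$ expansion gives a bijection $\mathbf{x_2} \leftrightarrow \mathbf{y_2}$ that preserves the objective value and identifies preimages of $\mathbf{x_2}$ under the projection from $L_1(t)$ with the $\mathbf{y_3}$'s satisfying $(\mathbf{y_2} \oplus \mathbf{y_3}) \in \sqcup_\alpha K(\alpha,t)$. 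Hence $L_3(t)$, $f_\ell(t)$, and $g(t)$ in the translated exclusion problem coincide with their counterparts in the original, and Theorem \ref{main3} then delivers the desired EQP conclusion for Theorem \ref{main2}. The most delicate step is the carry construction: one must verify that the carries stay uniformly bounded in $t$ (which genuinely uses the digit range $[0,t-1]$) and that each valid $\mathbf{y_1}$ corresponds to exactly one carry vector $\alpha \in \mathfrak{S}$, so that the $K(\alpha,t)$ partition the digit encoding of $L_1(t)$ with neither overlap nor omission.
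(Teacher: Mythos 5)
Your proposal is correct and follows essentially the same route as the paper: replace each coordinate by its base-$t$ digits, observe that boundedness forces a uniform degree bound so the digit vector has fixed length, and decompose the digit-encoded constraint $A_1(t)\mathbf{x_1} = \mathbf{b_1}(t)$ into finitely many constant-matrix systems indexed by carry patterns. The one noticeable difference is that the paper outsources both the coordinate bound (Proposition~\ref{bounded}, i.e.\ Proposition~3.2 of~\cite{Shen}) and the entire disjoint carry decomposition (Proposition~\ref{bt1}, i.e.\ Proposition~3.3 of~\cite{Shen}) to cited results, and then devotes its own space to verifying that the digit maps $\varphi_{1,t},\varphi_{2,t},\varphi_{3,t}$ restrict to bijections $L_1'(t)\to L_1(t)$, $L_2'(t)\to L_2(t)$, and (fiberwise) $L_3'(t)\to L_3(t)$ that commute with the objective (Propositions~\ref{b1}--\ref{b4}). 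You instead sketch the carry analysis inline --- the recursive definition $c_{k,\ell}$, the induction giving a $t$-independent bound, uniqueness of the carry vector for a given $\mathbf{y_1}$, hence disjointness of the $K(\alpha,t)$ --- and state the bijection more briefly. Both are valid; the paper's presentation is more modular while yours is more self-contained. Your reasoning for the coordinate bound via Cramer's rule on vertices is correct in spirit (and is essentially what underlies the cited Proposition~\ref{bounded}). No genuine gap; the only caveat worth flagging is that the carry bound and the exactness of the partition hold for $t\gg 0$ rather than all $t$, which is harmless since EQP is an eventual property, and the paper's cited proposition is careful about the same point.
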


To prove this proposition, we use Proposition 3.2 by Shen \cite{Shen}.

\begin{proposition} \label{bounded} Consider a PILP given by a conjunction of parametric inequalities whose real vector set is bounded. Let its lattice point set be $L(t).$ Then there exists a positive integer $r$ such that for $t \gg 0$, $\mathbf{x}$ in $L(t)$ and all $i,$ $0 \le x_i < t^r.$ \end{proposition}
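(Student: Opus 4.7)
The plan is to reduce bounding lattice points in $L(t)$ to bounding vertex coordinates of the real polytope $R(t)$, and then to control those vertex coordinates via Cramer's rule, which expresses them as rational functions of $t$ with numerator and denominator of bounded degree in $\mathbb{Z}[t]$.

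First I would observe that since $L(t) \subseteq R(t)$, each coordinate $x_i$ of a lattice point is bounded above by $M_i(t) := \max\{x_i : \mathbf{x} \in R(t)\}$, which is finite because $R(t)$ is bounded; the lower bound $x_i \ge 0$ comes from the paper's PILP convention, in which nonnegativity $-x_j \le 0$ is among the parametric inequalities. It therefore suffices to polynomially bound $M_i(t)$ from above. By LP theory, when $R(t)$ is nonempty and bounded, $M_i(t)$ is attained at a vertex. Let $N$ be the total number of parametric inequalities (including any nonnegativity constraints). Every vertex of $R(t)$ is the unique solution of some $n \times n$ subsystem $M_C(t)\mathbf{x} = \mathbf{b}_C(t)$, where $C \subseteq \{1, \ldots, N\}$ is a subset of size $n$ indexing $n$ linearly independent tight constraints. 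The collection of such subsets $C$ has size at most $\binom{N}{n}$, a finite number independent of $t$.

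Next, for each configuration $C$ with $\det M_C(t) \not\equiv 0$ in $\mathbb{Z}[t]$ (the others yield no vertex and can be discarded), Cramer's rule gives the $j$-th coordinate of the vertex as a rational function
\begin{displaymath}
v(C, t)_j = \frac{p_{C,j}(t)}{q_C(t)}, \qquad p_{C,j},\, q_C \in \mathbb{Z}[t],
\end{displaymath}
with $\deg p_{C,j}, \deg q_C \le nd$, where $d$ is the maximum degree of the entries of $A$ and $\mathbf{b}$. Because $q_C$ is a nonzero integer polynomial, its leading term dominates for $t$ larger than all its real roots, so $|q_C(t)| \ge 1$ for $t \gg 0$; also $|p_{C,j}(t)| \le t^{nd+1}$ for $t \gg 0$. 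Hence $|v(C,t)_j| \le t^{nd+1}$ for $t \gg 0$. Setting $r = nd + 1$ and taking the maximum of the finitely many "$t \gg 0$" thresholds across all $C$ and all $j$ gives a single $r$ and a single threshold for which $M_i(t) \le \max_C |v(C,t)_i| < t^r$ uniformly in $i$, yielding $0 \le x_i < t^r$ for every $\mathbf{x} \in L(t)$ and every $t \gg 0$.

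The main obstacle is handling the denominator $q_C(t)$: a priori, Cramer's rule produces rational functions, not polynomials, and $|v(C,t)_j|$ could conceivably blow up wherever $q_C$ is small. The saving observation is that $q_C$ has \emph{integer} coefficients and is not identically zero, so $|q_C(t)|$ tends to infinity with $t$, and in particular is at least $1$ outside a bounded set. A secondary subtlety is that not every configuration $C$ actually realizes a vertex of $R(t)$ for every $t$ (the putative vertex might violate some other inequality), but since we only need an \emph{upper} bound on the LP optimum, taking the envelope over all candidate configurations is harmless. Once that is in place the argument is essentially a uniform degree count.
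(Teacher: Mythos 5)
Your argument is correct, and there is nothing in the paper to compare it against: Proposition \ref{bounded} is not proved here but imported as Proposition 3.2 of \cite{Shen}, so the paper's ``own proof'' is an external citation. Your route --- reduce to the LP optimum over $R(t)$, note it is attained at a vertex, write each candidate vertex as the solution of an $n\times n$ subsystem of tight constraints, and bound its coordinates by Cramer's rule with numerator and denominator in $\mathbb{Z}[t]$ of degree at most $nd$ --- is the standard and natural argument, and the key saving observation (for integer $t$ beyond the largest real root, $q_C(t)$ is a nonzero integer, hence $|q_C(t)|\ge 1$, so the rational function is dominated by its numerator) is exactly right; the finitely many thresholds over the at most $\binom{N}{n}$ configurations then give a uniform $r$. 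Two small points to tighten: the lower bound $0\le x_i$ does not follow from boundedness alone, but only because the PILPs to which the proposition is applied (the sets $L_1(t)$ and $L_2(t)$ in Theorem \ref{main2}) carry the nonnegativity constraints $\mathbf{x}\ge\mathbf{0}$ among their parametric inequalities --- you flag this correctly, though as literally stated the proposition needs either that convention or the weaker conclusion $|x_i|<t^r$; and the phrase ``its leading term dominates'' is not the real reason $|q_C(t)|\ge 1$ --- the integrality of $q_C(t)$ at integer $t$ is --- but you state that as well, so the proof stands.
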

 
Before we prove Proposition \ref{m3m2}, we construct several correspondences between certain lattice point sets and prove that they are bijections. We are essentially thinking of the $x_i$ as $r$-digit integers in base $t,$ treating the digits as indeterminates, and proving that nothing unexpected happens.

Consider a parametric exclusion problem given, as in Theorem \ref{main2}, by positive integers $m, n_1, n_2, \\ p_1, p_2,$ $\mathbf{c}$ in $\mathbb{Z}[u]^{n_2},$ $A_1$ in $\mathbb{Z}[u]^{p_1 \times (n_1 + n_2)},$ $A_2$ in $\mathbb{Z}[u]^{p_2 \times n_2},$ $\mathbf{b_1}$ in $\mathbb{Z}[u]^{p_1},$ and $\mathbf{b_2}$ in $\mathbb{Z}[u]^{p_2}.$ Let its optimum value functions be $\{f_l\}$ and its size function be $g.$

Proposition \ref{bounded} applies to $L_1(t)$ and $L_2(t).$ It tells us that there exists an integer $r$ such that for $t\gg 0$, $\mathbf{x_1}$ in $L_1(t)$ or $\mathbf{x_2}$ in $L_2(t)$ and all appropriate $i,$ $0 \le x_i < t^r.$

We now construct another parametric exclusion problem, $Q'$. Let $m'=m, n_1'=rn_1, n_2'=rn_2.$ Let the indeterminates be $y_{1, 1}, x_{1, 2}, \ldots, y_{n_1+n_2, r}.$ We use the following as shorthand for vectors of these indeterminates: $\mathbf{y_1}=(y_{1, 1}, y_{1, 2}, \ldots, y_{n_1+n_2, r}),$ $ \mathbf{y_2}=(y_{1, 1}, y_{1, 2}, \ldots, y_{n_2, r}),$ and $\mathbf{y_3}=(y_{n_2+1, 1}, y_{n_2+1, 2}, \ldots, x_{n_1+n_2, r}).$ 

Let the objective function be $\sum_{i=1}^{n_2} c_i(t) \sum_{j=1}^r y_{i, j}t^{j-1},$ which can be written as a polynomial covector times $\mathbf{y_2}.$

For all $t,$ let 
\begin{align*} R_1'(t) &:=\left\{ \mathbf{y_1} \in \mathbb{R}^{r(n_1+n_2)} \mid 0 \le y_{i, j} \le t-1 \wedge A_1(t) \left( \sum_{j=1}^r y_{1, j}t^{j-1}, \ldots, \sum_{j=1}^r y_{n_1+n_2, j}t^{j-1}\right) \le \mathbf{b_1}(t)  \right\}, \\ R_2'(t) &:=\left\{ \mathbf{y_2} \in \mathbb{R}^{rn_2} \mid 0 \le y_{i, j} \le t-1 \wedge A_2(t) \left( \sum_{j=1}^r y_{1, j}t^{j-1}, \ldots, \sum_{j=1}^r y_{n_2, j}t^{j-1}\right) = \mathbf{b_2}(t)  \right\}. \end{align*}

Observe that both sets of constraints can be written in terms of parametric inequalities in $\mathbf{y_1}$ and $\mathbf{y_2},$ respectively, which can then be written in matrix form: $A_1'(t) \mathbf{y_2} \le \mathbf{b_1'}(t).$ However, it is convenient to not do so explicitly. Let $L_1'(t)$ and $L_2'(t)$ be the set of lattice points in $R_1'(t)$ and $R_2'(t),$ respectively.

Let 
\begin{displaymath} L_3'(t):=\left \{ \mathbf{y_2} \in L_2'(t) \mid  \# \left\{ \mathbf{y_3} \in \mathbb{Z}^{rn_1} \mid (\mathbf{y_2} \oplus \mathbf{y_3}) \in L'_1(t) \right \}  < m  \right \}.\end{displaymath}

Let $\{f'_\ell\}$ be the optimum value functions and $g'$ be the size function.

Let $\varphi_{1, t}$ be the map from $\mathbb{Z}^{r(n_1+n_2)}$ to $\mathbb{Z}^{n_1+n_2}$ given by 
\begin{displaymath} \varphi_{1,t}(\mathbf{y_1})=\left( \sum_{j=1}^r y_{1, j}t^{j-1}, \ldots, \sum_{j=1}^r y_{n_1+n_2, j}t^{j-1}\right). \end{displaymath}

Define $\varphi_{2, t}$ from $\mathbb{Z}^{rn_2}$ to $\mathbb{Z}^{n_2}$ and $\varphi_{3, t}$ from $\mathbb{Z}^{rn_1}$ to $\mathbb{Z}^{n_1}$ similarly.

\begin{proposition} \label{b1} For $t > N,$ $\varphi_{1, t}$ is a bijection from $L'_1(t)$ to $L_1(t).$ \end{proposition}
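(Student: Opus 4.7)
The plan is to recognize $\varphi_{1,t}$ as the usual base-$t$ expansion map, coordinate by coordinate, and check its two required properties separately. The integer $r$ in the construction of $Q'$ was chosen via Proposition \ref{bounded} precisely so that for $t \gg 0$, every coordinate of every $\mathbf{x_1} \in L_1(t)$ fits in $r$ base-$t$ digits; this is what makes the bijection surjective.

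First I would show $\varphi_{1,t}$ maps $L'_1(t)$ into $L_1(t)$. Given $\mathbf{y_1} \in L'_1(t)$, each $y_{i,j}$ is an integer in $[0,t-1]$, so $\varphi_{1,t}(\mathbf{y_1})$ is an integer vector with every coordinate in $[0, t^r - 1]$, hence nonnegative. The remaining linear constraint in the definition of $R'_1(t)$ is, by construction, literally the statement that $A_1(t) \varphi_{1,t}(\mathbf{y_1})$ satisfies the constraint that defines $R_1(t)$. Hence $\varphi_{1,t}(\mathbf{y_1}) \in L_1(t)$.

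Injectivity is immediate from the uniqueness of the base-$t$ representation of a nonnegative integer with at most $r$ digits: if $\varphi_{1,t}(\mathbf{y_1}) = \varphi_{1,t}(\mathbf{y_1}')$, then for each $i$ the identity $\sum_j y_{i,j} t^{j-1} = \sum_j y'_{i,j} t^{j-1}$ with digits in $[0,t-1]$ forces $y_{i,j} = y'_{i,j}$ for all $j$.

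For surjectivity, apply Proposition \ref{bounded} to the PILP defining $L_1(t)$, whose real vector set is bounded by the hypothesis of Theorem \ref{main2}. After possibly increasing $r$ in advance so that it works for both $L_1$ and $L_2$ simultaneously, this yields an integer $N$ such that for $t > N$ and any $\mathbf{x_1} \in L_1(t)$, every coordinate $x_i$ is a nonnegative integer strictly less than $t^r$. Decompose each $x_i$ in base $t$ into its unique $r$-tuple of digits $y_{i,1}, \ldots, y_{i,r} \in \{0, \ldots, t-1\}$ and assemble the $y_{i,j}$ into $\mathbf{y_1}$. By construction $\varphi_{1,t}(\mathbf{y_1}) = \mathbf{x_1}$, and the constraint inherited from $A_1(t) \mathbf{x_1}$ being $\mathbf{b_1}(t)$-compliant is precisely the $A_1$-constraint appearing in the definition of $R'_1(t)$, so $\mathbf{y_1} \in L'_1(t)$.

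The only genuinely nontrivial point is surjectivity: a priori a lattice point in $L_1(t)$ could have coordinates too large to fit in $r$ base-$t$ digits, and there is no way to produce a preimage in $L'_1(t)$ in that case. Proposition \ref{bounded} is exactly the input that rules this out for $t > N$, which is why the statement is restricted to sufficiently large $t$. The rest of the argument is a bookkeeping check that the linear constraints transform as expected under the digit expansion.
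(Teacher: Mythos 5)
Your proof is correct and takes essentially the same approach as the paper: check well-definedness into $L_1(t)$, get injectivity from uniqueness of base-$t$ digit expansions on $\{0,\ldots,t-1\}^{r(n_1+n_2)}$, and get surjectivity from the bound $0 \le x_i < t^r$ guaranteed by the choice of $r$ via Proposition \ref{bounded}.
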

\begin{proof}
By construction, $\varphi_{1, t}$ maps $L'_1(t)$ into $L_1(t).$ It is injective because $L_1'(t)$ is a subset of $\{0, \ldots, t-1\}^{r(n_1+n_2)}$ for all $t.$ By the definition of $r,$ for $t > N,$ $\mathbf{x_1}$ in $L_1(t),$ and $0 \le x_1, \ldots, \\ x_{n_1+n_2} < t^r.$  Therefore, for $1 \le i \le n_1+n_2,$ there exist unique $y_{i, 1}, \ldots, y_{i, r}$ in $\{0, \ldots, t-1\}$ such that $x_i = \sum_{j=1}^r y_{i, j} t^{j-1}.$ Then $\mathbf{y_1}=(y_{1, 1}, y_{1, 2}, \ldots, y_{n_1+n_2, r})$ lies in $L_1'(t).$ This shows that $\varphi_{1, t}$ maps $L_1'(t)$ onto $L_1(t),$ as desired. \end{proof}

\begin{proposition} \label{b2} For $t > N,$ $\varphi_{2, t}$ is a bijection from $L'_2(t)$ to $L_2(t).$ \end{proposition}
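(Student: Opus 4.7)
The plan is to mirror the proof of Proposition \ref{b1} almost verbatim. The map $\varphi_{2,t}$ is defined exactly like $\varphi_{1,t}$, just on fewer coordinates, and the integer $r$ produced by Proposition \ref{bounded} was already chosen so that every coordinate of every element of $L_1(t)$ \emph{or} $L_2(t)$ lies in $[0, t^r)$ for $t > N$. This is precisely the input needed for a base-$t$ digit bijection to work.

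First, I verify that $\varphi_{2,t}$ sends $L'_2(t)$ into $L_2(t)$. For $\mathbf{y_2} \in L'_2(t)$, the digit bounds $0 \le y_{i,j} \le t-1$ force $\varphi_{2,t}(\mathbf{y_2}) \ge \mathbf{0}$ with integer coordinates, and the remaining defining constraint of $R'_2(t)$ is, by construction, obtained from the defining constraint of $R_2(t)$ by substituting $\mathbf{x_2} = \varphi_{2,t}(\mathbf{y_2})$. Hence the image lies in $R_2(t) \cap \mathbb{Z}^{n_2} = L_2(t)$.

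Second, injectivity is immediate: $L'_2(t) \subseteq \{0, \ldots, t-1\}^{rn_2}$, and uniqueness of the base-$t$ expansion of nonnegative integers less than $t^r$ means distinct digit tuples produce distinct images under $\varphi_{2,t}$.

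Third, for surjectivity, fix $t > N$ and $\mathbf{x_2} \in L_2(t)$. By Proposition \ref{bounded} applied to $L_2$, every coordinate satisfies $0 \le x_i < t^r$, so each $x_i$ admits a unique base-$t$ expansion $x_i = \sum_{j=1}^r y_{i,j} t^{j-1}$ with $y_{i,j} \in \{0, \ldots, t-1\}$. Assembling these digits yields $\mathbf{y_2} \in \{0, \ldots, t-1\}^{rn_2}$ with $\varphi_{2,t}(\mathbf{y_2}) = \mathbf{x_2}$; since $\mathbf{x_2}$ satisfies the defining linear constraint of $R_2(t)$, the tuple $\mathbf{y_2}$ satisfies the defining linear constraint of $R'_2(t)$ by the same substitution argument used in step one, so $\mathbf{y_2} \in L'_2(t)$.

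I do not anticipate any real obstacle: the argument is a transcription of the proof of Proposition \ref{b1} with the single change that the constraint pair $(R_2, R'_2)$ replaces $(R_1, R'_1)$, and the threshold $N$ from Proposition \ref{bounded} was chosen to cover $L_2$ simultaneously. In fact, one could fold Propositions \ref{b1} and \ref{b2} into a single general lemma stating that $\varphi_{t}$ is a bijection between the digit-level and integer-level lattice point sets of any parametric system whose lattice points are uniformly polynomially bounded, and then invoke it twice.
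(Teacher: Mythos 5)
Your proof is correct and is exactly what the paper intends: the paper's own "proof" of Proposition \ref{b2} is the single sentence "The proof is similar," referring to Proposition \ref{b1}, and your argument is precisely that transcription with $(R_2, R'_2, \varphi_{2,t}, n_2)$ in place of $(R_1, R'_1, \varphi_{1,t}, n_1+n_2)$. No further comment is needed.
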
 
The proof is similar.

\begin{proposition} \label{b3} For $t>N$ and $\mathbf{y_2}$ in $\{0, \ldots, t-1\}^{rn_2},$ $\varphi_{3, t}$ is a bijection from \begin{displaymath}  \{\mathbf{y_3} \in \mathbb{Z}^{rn_1} \mid \mathbf{y_2} \oplus \mathbf{y_3} \in L_1'(t)\} \text{  to  }\{\mathbf{x_3} \in \mathbb{Z}^{n_1} \mid \varphi_{2, t}(\mathbf{y_2}) \oplus \mathbf{x_3} \in L_1(t)\}.\end{displaymath}\end{proposition}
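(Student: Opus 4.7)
The plan is to prove Proposition \ref{b3} as a fiber-wise refinement of Proposition \ref{b1}, reusing the base-$t$ digit decomposition that underlies $\varphi_{1,t}$ and $\varphi_{2,t}$. The key observation is that $\varphi_{1,t}$ respects the splitting of the ambient vector into its first $rn_2$ and last $rn_1$ coordinates, so that for any $\mathbf{y_2}\oplus\mathbf{y_3}$ we have $\varphi_{1,t}(\mathbf{y_2}\oplus\mathbf{y_3}) = \varphi_{2,t}(\mathbf{y_2}) \oplus \varphi_{3,t}(\mathbf{y_3})$. Once this compatibility is noted, everything will follow from unique base-$t$ digit expansions, so I expect the routine part to be writing down three short arguments (well-definedness, injectivity, surjectivity) and the only nontrivial input is Proposition \ref{bounded}, which we have already invoked via the constant $r$.

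First I would check that $\varphi_{3,t}$ actually maps the first set into the second. Fix $\mathbf{y_2}\in\{0,\ldots,t-1\}^{rn_2}$ and suppose $\mathbf{y_3}$ satisfies $\mathbf{y_2}\oplus\mathbf{y_3}\in L_1'(t)$. By the definition of $R_1'(t)$, all coordinates of $\mathbf{y_3}$ lie in $\{0,\ldots,t-1\}$, and the constraint $A_1(t)\varphi_{1,t}(\mathbf{y_2}\oplus\mathbf{y_3})\le \mathbf{b_1}(t)$ holds. By the splitting observation, $\varphi_{1,t}(\mathbf{y_2}\oplus\mathbf{y_3}) = \varphi_{2,t}(\mathbf{y_2})\oplus\varphi_{3,t}(\mathbf{y_3})$, so this vector is in $L_1(t)$. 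Hence $\varphi_{3,t}(\mathbf{y_3})$ lies in the target set.

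Next I would handle injectivity and surjectivity. For injectivity, two $\mathbf{y_3}, \mathbf{y_3}'$ in the first set both have all coordinates in $\{0,\ldots,t-1\}$; if $\varphi_{3,t}(\mathbf{y_3}) = \varphi_{3,t}(\mathbf{y_3}')$, uniqueness of base-$t$ expansions of nonnegative integers less than $t^r$ forces $\mathbf{y_3}=\mathbf{y_3}'$. For surjectivity, let $\mathbf{x_3}$ satisfy $\varphi_{2,t}(\mathbf{y_2})\oplus\mathbf{x_3}\in L_1(t)$. For $t>N$, Proposition \ref{bounded} (as used to define $r$) guarantees $0\le (\mathbf{x_3})_i < t^r$ for each coordinate, so each $(\mathbf{x_3})_i$ has a unique base-$t$ expansion of length $r$ with digits in $\{0,\ldots,t-1\}$. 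Collecting these digits yields a unique $\mathbf{y_3}\in\{0,\ldots,t-1\}^{rn_1}$ with $\varphi_{3,t}(\mathbf{y_3})=\mathbf{x_3}$. It remains to verify $\mathbf{y_2}\oplus\mathbf{y_3}\in L_1'(t)$: the digit constraints $0\le y_{i,j}\le t-1$ are met by construction (using that $\mathbf{y_2}$ itself has coordinates in $\{0,\ldots,t-1\}$), and applying $\varphi_{1,t}$ recovers $\varphi_{2,t}(\mathbf{y_2})\oplus\mathbf{x_3}\in L_1(t)$, which gives the inequality $A_1(t)\varphi_{1,t}(\mathbf{y_2}\oplus\mathbf{y_3})\le\mathbf{b_1}(t)$ required by the definition of $R_1'(t)$.

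The main conceptual obstacle (already dispatched in Proposition \ref{b1}) is the need for the upper bound $t^r$ coming from Proposition \ref{bounded}; without it, surjectivity could fail because an $\mathbf{x_3}$ coordinate might exceed $t^r-1$ and thus have no length-$r$ base-$t$ expansion. Everything else is bookkeeping on digit decompositions and the fact that $\varphi_{1,t}$, $\varphi_{2,t}$, $\varphi_{3,t}$ are compatible under the concatenation $\oplus$.
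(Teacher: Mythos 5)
Your proposal is correct and follows essentially the same approach as the paper: prove well-definedness via the compatibility $\varphi_{1,t}(\mathbf{y_2}\oplus\mathbf{y_3})=\varphi_{2,t}(\mathbf{y_2})\oplus\varphi_{3,t}(\mathbf{y_3})$, injectivity from the fact that all coordinates lie in $\{0,\ldots,t-1\}$, and surjectivity from the bound $0\le(\mathbf{x_3})_i<t^r$ provided by Proposition \ref{bounded}. The only cosmetic difference is that in the surjectivity step you verify the defining constraints of $L_1'(t)$ directly, whereas the paper simply invokes the bijection of Proposition \ref{b1} to conclude $\mathbf{y_2}\oplus\mathbf{y_3}\in L_1'(t)$; both routes are immediate.
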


\begin{proof} Fix $t>N.$ Since $L_1'(t)$ is a subset of $\{0, \ldots, t-1\}^{rn_2},$ the former set is also a subset of $\\ \{0, \ldots, t-1\}^{rn_1},$ and $\varphi_{3, t}$ maps the former set to its image injectively. 

If $\mathbf{y_3}$ is in the former set, then $\mathbf{y_2} \oplus \mathbf{y_3}$ is in $L_1'(t),$ $\varphi_{1, t}(\mathbf{y_2} \oplus \mathbf{y_3}) = \varphi_{2, t}(\mathbf{y_2})\oplus\varphi_{3, t}(\mathbf{y_3})$ is in $L_1(t)$ (by Proposition \ref{b1}), and $\varphi_{3, t}(\mathbf{y_3})$ is in the latter set.

If $\mathbf{x_3}$ is in the latter set, then $\varphi_{2, t}(\mathbf{y_2}) \oplus \mathbf{x_3}$ is in $L_1(t)$. By the definition of $r,$ $0 \le x_{n_2+1}, \ldots,\\ x_{n_1+n_2} < t^r.$ Therefore, there exists $\mathbf{y_3}$ in $\{0, \ldots, t-1\}^{rn_1}$ such that $\varphi_{3,t}(\mathbf{y_3})=\mathbf{x_3}.$ Then $\varphi_{2, y}(\mathbf{y_2}) \oplus \varphi_{3, t}(\mathbf{y_3}) = \varphi_{1, t}(\mathbf{y_2} \oplus \mathbf{y_3})$ is in $L_1(t),$ $\mathbf{y_2} \oplus \mathbf{y_3}$ is in $L_1'(t)$ (by Proposition \ref{b1}), and $\mathbf{y_3}$ is in the former set, as desired. \end{proof}

\begin{proposition} \label{b4} For $t>N,$ $\varphi_{2, t}$ is a bijection from $L'_3(t)$ to $L_3(t)$ which commutes with evaluating the respective objective function. \end{proposition}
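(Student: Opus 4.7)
The plan is to assemble Proposition \ref{b4} from the three bijections already established. Since $L'_3(t) \subseteq L'_2(t)$ and $L_3(t) \subseteq L_2(t)$, and Proposition \ref{b2} already tells us $\varphi_{2,t}$ restricts to a bijection $L'_2(t) \to L_2(t)$ for $t > N$, all that remains is to check that $\varphi_{2,t}$ carries the subset $L'_3(t)$ exactly onto the subset $L_3(t)$, and that the objective values agree under this correspondence.

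First I would fix $t > N$ and a point $\mathbf{y_2} \in L'_2(t)$, setting $\mathbf{x_2} = \varphi_{2,t}(\mathbf{y_2}) \in L_2(t)$. Note that $L'_2(t) \subseteq \{0, \ldots, t-1\}^{rn_2}$ by the definition of $R'_2(t)$, so Proposition \ref{b3} applies and gives a bijection
\begin{displaymath}
\varphi_{3,t}\colon \{\mathbf{y_3} \in \mathbb{Z}^{rn_1} \mid \mathbf{y_2} \oplus \mathbf{y_3} \in L'_1(t)\} \;\longrightarrow\; \{\mathbf{x_3} \in \mathbb{Z}^{n_1} \mid \mathbf{x_2} \oplus \mathbf{x_3} \in L_1(t)\}.
\end{displaymath}
In particular, these two sets have the same cardinality, so one has fewer than $m$ elements if and only if the other does. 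Comparing with the definitions of $L'_3(t)$ and $L_3(t)$, this means $\mathbf{y_2} \in L'_3(t) \iff \varphi_{2,t}(\mathbf{y_2}) \in L_3(t)$. Combined with the bijectivity in Proposition \ref{b2}, this shows $\varphi_{2,t}$ restricts to a bijection $L'_3(t) \to L_3(t)$.

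For the objective-function compatibility, I would just unwind the definitions. The objective in the original problem $Q$ evaluated at $\mathbf{x_2}$ is $\mathbf{c}^{\intercal}(t)\mathbf{x_2} = \sum_{i=1}^{n_2} c_i(t)\, x_i$. The objective in $Q'$ was defined to be $\sum_{i=1}^{n_2} c_i(t) \sum_{j=1}^{r} y_{i,j} t^{j-1}$. Since $\varphi_{2,t}(\mathbf{y_2})$ has $i^{\text{th}}$ coordinate $\sum_{j=1}^r y_{i,j} t^{j-1}$, the two expressions agree term by term, so evaluating the $Q'$-objective at $\mathbf{y_2}$ equals evaluating the $Q$-objective at $\varphi_{2,t}(\mathbf{y_2})$.

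Nothing here is a real obstacle: the proof is essentially a bookkeeping verification that the three previous propositions fit together, with the preimage-counting criterion passing through Proposition \ref{b3} because $\varphi_{3,t}$ is itself a bijection between the two fiber sets. The only minor care needed is to ensure we are in the regime $t > N$ where $\{0, \ldots, t-1\}$ contains all digit coordinates of every lattice point under consideration, so that Proposition \ref{b3}'s hypothesis ``$\mathbf{y_2} \in \{0, \ldots, t-1\}^{rn_2}$'' is met for every $\mathbf{y_2} \in L'_2(t)$.
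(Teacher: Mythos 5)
Your proposal is correct and follows essentially the same route as the paper: it combines Proposition \ref{b2} (the ambient bijection $L'_2(t)\to L_2(t)$) with Proposition \ref{b3} (the fiber bijection, hence equality of preimage counts) to conclude that membership in $L'_3(t)$ corresponds exactly to membership in $L_3(t)$, and then checks the objective functions agree by construction. The only difference is cosmetic: you package the two inclusions as a single ``if and only if'' and restrict the bijection, whereas the paper chases elements in each direction separately.
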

\begin{proof} Fix $t>N.$ $L'_3(t)$ is a subset of $L_2'(t)$, which is a subset of $\{0, \ldots, t-1\}^{rn_2},$ so $\varphi_{2, t}$ maps injectively from $L_3'(t).$ 

If $\mathbf{y_2}$ is in $L_3'(t),$ then $\mathbf{y_2}$ is in $L_2'(t)$ and $ \# \{\mathbf{y_3} \in \mathbb{Z}^{rn_1} \mid \mathbf{y_2} \oplus \mathbf{y_3} \in L_1'(t)\} < m.$ By Propositions \ref{b2} and \ref{b3}, $\varphi_{2, t}(\mathbf{y_2})$ is in $L_2(t),$ and $\# \{\mathbf{x_3} \in \mathbb{Z}^{n_1} \mid \varphi_{2, t}(\mathbf{y_2}) \oplus \mathbf{x_3} \in L_1(t)\} < m,$ so $\varphi_{2, t}(\mathbf{y_2})$ is in $L_3(t).$ 

If $\mathbf{x_2}$ is in $L_3(t)$, then it is in $L_2(t)$ too, and $0 \le x_1, \ldots, x_{n_2}< t^r.$ Therefore, there exists $\mathbf{y_2}$ in $\{0, \ldots, t-1\}^{rn_2}$ such that $\varphi_{2,t}(\mathbf{y_2})=\mathbf{x_2}.$ By Proposition \ref{b2}, $\mathbf{y_2}$ is in $L_2'(t).$ Also from the definition of $L_3(t),$ $ \# \{\mathbf{x_3} \in \mathbb{Z}^{n_1} \mid \varphi_{2, t}(\mathbf{y_2}) \oplus \mathbf{x_3} \in L_1(t)\} < m,$ so by Proposition \ref{b3}, $\\ \# \{\mathbf{y_3} \in \mathbb{Z}^{rn_1} \mid \mathbf{y_2} \oplus \mathbf{y_3} \in L_1'(t)\} < m,$ so $\mathbf{y_2}$ is in $L_3'(t),$ as desired. 

The bijection commutes with evaluating the respective objective function by construction.\end{proof}

As first shown by Chen, Li, and Sam in \cite{CLS}, there exist a finite number of much simpler lattice point sets whose union is $L_1'(t)$ for $t \gg 0.$ Proposition 3.3 of \cite{Shen} slightly modifies their argument and shows the following, which we also use to prove Proposition \ref{m3m2}.

\begin{proposition}[Proposition 3.3 of \cite{Shen}] \label{bt1}
For all $t,$ let 
\begin{displaymath} L_1(t):=\{\mathbf{x_1} \in \mathbb{Z}^{n_1+n_2} \mid \mathbf{x_1} \ge \mathbf{0} \wedge A_1(t) \mathbf{x_1} = \mathbf{b_1}(t)\}. \end{displaymath}

Let $r$ be an integer such that for $t \gg 0,$ $\mathbf{x_1}$ in $L_1(t),$ and all $i,$ $0 \le x_i < t^r.$ Let
\begin{displaymath} L_1'(t):=\left\{ \mathbf{y_1} \in \{0, \ldots, t-1\}^{r(n_1+n_2)} \mid A_1(t) \left( \sum_{j=1}^r y_{1, j}t^{j-1}, \ldots, \sum_{j=1}^r y_{n_1+n_2, j}t^{j-1}\right) \le \mathbf{b_1}(t)  \right\}.\end{displaymath}

In other words, $L_1'(t)$ is the set of $r(n_1+n_2)$-dimensional integer vectors which give the digits of an $(n_1+n_2)$-dimensional integer vector in base $t$ which is in the set $L_1(t).$

Then there exists a finite set $\mathfrak{S},$ a positive integer $q,$ $A$ in $\mathbb{Z}^{q \times r(n_1+n_2)},$ and $\mathbf{b(\alpha)}$ in $\mathbb{Z}[u]^q$ for all $\alpha$ in $\mathfrak{S}$ with the following properties.

For all $t,$ let  
\begin{displaymath} K(\alpha, t):= \left\{ \mathbf{y_1} \in \{0, \ldots, t-1\}^{r(n_1+n_2)} \mid A \mathbf{y_1} = \mathbf{b}(\alpha)(t) \right\}. \end{displaymath}

For $t\gg 0,$ the sets $K(\alpha, t)$ for $\alpha$ in $\mathfrak{S}$ are disjoint, and their union is $L_1'(t).$
\end{proposition}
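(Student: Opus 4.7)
The plan is to substitute the base-$t$ digit expansion $x_i=\sum_{j=1}^{r}y_{i,j}t^{j-1}$ into the parametric equation $A_1(t)\mathbf{x_1}=\mathbf{b_1}(t)$ and then absorb the $t$-dependence of $A_1(t)$ into a finite enumeration over bounded integer ``carry'' parameters. Within each carry configuration, the resulting system of linear constraints on the digit vector $\mathbf{y_1}$ will have a \emph{constant} integer coefficient matrix $A$, with the $t$-dependence surviving only on the right-hand side $\mathbf{b}(\alpha)(t)$.

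First, writing $a_{ki}(t)=\sum_{s}c_{kis}t^{s}$ with $c_{kis}\in\mathbb{Z}$ and substituting gives, for each row $k$, an identity
\[
\sum_{\ell\ge 0} P_{k\ell}(\mathbf{y_1})\, t^{\ell} \;=\; \sum_{\ell\ge 0} q_{k\ell}\, t^{\ell},
\]
where each $P_{k\ell}$ is a fixed integer linear form in $\mathbf{y_1}$ obtained by collecting like powers of $t$, and each $q_{k\ell}\in\mathbb{Z}$ is a coefficient of $b_{k}$; crucially, neither depends on $t$. Setting $\Delta_{k\ell}:=P_{k\ell}(\mathbf{y_1})-q_{k\ell}$, we have $\sum_{\ell}\Delta_{k\ell}t^{\ell}=0$, and because $y_{i,j}\in[0,t-1]$ each $|\Delta_{k\ell}|$ is $O(t)$ uniformly in $\mathbf{y_1}$, with an implicit constant depending only on the entries of $A_1,\mathbf{b_1}$ and on $r$.

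Next I extract carries inductively. Reducing $\sum_{\ell}\Delta_{k\ell}t^{\ell}=0$ modulo $t$ forces $\Delta_{k,0}\equiv 0\pmod{t}$; together with $|\Delta_{k,0}|=O(t)$ this produces a unique integer $\sigma_{k,0}$, bounded independently of $t$, with $\Delta_{k,0}=\sigma_{k,0}t$. Dividing the running identity by $t$ and iterating gives, for each $\ell$, a uniquely determined integer $\sigma_{k,\ell}$ of uniformly bounded size satisfying
\[
P_{k\ell}(\mathbf{y_1}) \;=\; q_{k\ell}-\sigma_{k,\ell-1}+\sigma_{k,\ell}\, t, \qquad \sigma_{k,-1}=0,
\]
with the process terminating in $\sigma_{k,M'}=0$ at the top index $M'$. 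Each such equation has a constant integer linear form on the left and a polynomial in $t$ on the right, so a uniform bound on the carries yields a \emph{finite} index set $\mathfrak{S}$ of admissible tuples $\alpha=(\sigma_{k,\ell})_{k,\ell}$.

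Finally, I stack these equations across all $(k,\ell)$ to form $A\mathbf{y_1}=\mathbf{b}(\alpha)(t)$ with $A\in\mathbb{Z}^{q\times r(n_1+n_2)}$ of constant entries and $\mathbf{b}(\alpha)\in\mathbb{Z}[u]^{q}$, and define $K(\alpha,t)$ as in the statement. A telescoping computation shows that any $\mathbf{y_1}\in K(\alpha,t)$ satisfies $A_1(t)\varphi_{1,t}(\mathbf{y_1})=\mathbf{b_1}(t)$, so $K(\alpha,t)\subseteq L_{1}'(t)$; conversely the carry-extraction just described assigns a unique $\alpha\in\mathfrak{S}$ to every $\mathbf{y_1}\in L_{1}'(t)$. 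Uniqueness yields disjointness of the $K(\alpha,t)$ for $t\gg 0$ (for distinct $\alpha\neq\alpha'$ the polynomials $\mathbf{b}(\alpha)$ and $\mathbf{b}(\alpha')$ differ and hence disagree for $t\gg 0$), and the assignment shows the union equals $L_{1}'(t)$. I expect the main technical obstacle to be the careful inductive bookkeeping ensuring that each running quantity $\Delta_{k\ell}+\sigma_{k,\ell-1}$ remains divisible by $t$ and $O(t)$; this reduces to a straightforward inductive size estimate, but must be tracked precisely through the polynomial degree of $a_{ki}(t)$, the expansion length $r$, and the uniform bound on the accumulated carries.
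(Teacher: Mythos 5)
The paper does not reproduce a proof of this proposition; it is imported verbatim as Proposition~3.3 of \cite{Shen}, and the surrounding discussion identifies the key tool as base-$t$ representations together with bounded carry extraction (the idea originating with Chen--Li--Sam \cite{CLS}). Your argument is exactly that: substitute $x_i=\sum_j y_{i,j}t^{j-1}$, collect by powers of $t$, peel off carries $\sigma_{k,\ell}$ from the low order up, observe that each carry is bounded independently of $t$ because each $P_{k\ell}(\mathbf{y_1})$ is $O(t)$ on the digit box and there are only finitely many levels, index $\mathfrak S$ by the admissible carry tuples, and verify disjointness via the $t$-linear coefficients of the $\mathbf b(\alpha)$ and coverage via telescoping. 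This is a correct proof and matches the approach the paper attributes to \cite{Shen} and \cite{CLS}. One small remark for the write-up: the ``$\le$'' appearing in the displayed definition of $L_1'(t)$ in the statement is evidently a typo for ``$=$'' (as the accompanying prose and the equality defining $L_1(t)$ make clear), and your proof correctly treats the constraint as an equality; you might note this explicitly to avoid confusion. You should also make the termination condition $\sigma_{k,M'}=0$ part of the specification of $\mathfrak S$ (it follows automatically from $\sum_\ell\Delta_{k\ell}t^\ell=0$, as you observe, but the telescoping in the reverse inclusion $K(\alpha,t)\subseteq L_1'(t)$ uses it, so it must be enforced in $K(\alpha,t)$ rather than merely observed for realized tuples).
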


In simpler terms, Proposition \ref{bt1} is saying the following. Given a PILP in standard form whose lattice point set is a subset of $[0, t^r)^{n_1+n_2},$ we can write all coordinates as $r$-digit integers in base $t.$ In fact, we can form another PILP with lattice point set $L_1'(t)$ such that $L_1'(t)$ is a subset of $[0, t)^{r(n_1+n_2)}$ and $\mathbf{y_1}$ is in $L_1'(t)$ iff the corresponding vector $\mathbf{x_1} \in [0, t^r)^{n_1+n_2}$ lies in $L_1(t).$ 

There exists a finite indexing set $\mathfrak{S}$ (in particular, this indexing set is independent of $t$) such that $L_1(t)$ is the disjoint union of $K(\alpha, t)$ for $\alpha$ in $\mathfrak{S}.$ Each set $K(\alpha, t)$ has a simpler structure. Specifically, it is the lattice point set of a PILP in canonical form whose matrix has constant entries.

\begin{proof}[Proof of Proposition \ref{m3m2}] 
Assume Theorem \ref{main3}. Consider the parametric exclusion problem given by $m, n_1, n_2, p_1, p_2, \mathbf{c}, A_1, A_2, \mathbf{b_1},$ and $\mathbf{b_2}$ with optimum value functions $\{f_\ell\}$ and size function $g.$ Let $r, N$ be integers, and construct another parametric exclusion problem, $Q',$ as before. By construction, for all $t,$ $R'_2(t)$ is a subset of $[0, t-1]^{n_2}.$ By Proposition \ref{bt1}, for $t \gg 0,$ $L_1'(t)$ is the disjoint union of $K(\alpha, t)$ for $\alpha$ in $\mathfrak{S}.$ Therefore, Theorem \ref{main3} applies to the new parametric exclusion problem for $t \gg 0.$ Being EQP only depends on large arguments, so for all $\ell,$ $f'_\ell$ and $g'$ are EQP.

By Proposition \ref{b4}, for $t>N,$ $\varphi_{2, t}$ is a bijection from $L_3'(t)$ to $L_3(t),$ so $g'(t)=g(t)$ (for $t>N.$) By construction, this bijection commutes with evaluating the respective objective function, so for all $\ell,$ $f'_\ell(t)=f_\ell(t).$ Therefore, for all $\ell,$ $f_\ell$ and $g$ are EQP, as desired.
 \end{proof}


\section{Excluding Projections}
\label{sec-ep}
In this section, we make the bulk of the argument to prove Theorem \ref{main3}. We show that $L_3(t)$ is closely related to the lattice point set of a PILP whose constraints are a finite disjuction of finite conjunctions of parametric inequalities. This is in contrast to the hypotheses of Theorem \ref{Shen}, which stipulate a finite conjunction of parametric inequalities. Section \ref{sec-untangling} extends Theorem \ref{Shen} to finite disjunctions of finite conjunctions of parametric inequalities.

We use the notation in Theorem \ref{main3}, some of which we repeat here. Let $\mathbf{y_1}=(y_1, \ldots, y_{n_1+n_2}), \mathbf{y_2}=(y_1, \ldots, y_{n_2}),$ and $\mathbf{y_3}=(y_{n_2+1}, \ldots, y_{n_1+n_2}).$ In general, we use the same conventions for vectors such as $\mathbf{z_3}.$ Let $A$ = $A_1,$ a $q \times (n_1+n_2)$ matrix. Recall that 
\begin{displaymath} K(\alpha, t):=\{\mathbf{y_1} \in \{0, \ldots, t-1\}^{n_1+n_2} \mid A_1 \mathbf{y_1} = \mathbf{b(\alpha)}(t)\}. \end{displaymath}

Let 
\begin{displaymath} V(\alpha,t, \mathbf{y_2}):=\left\{\mathbf{y_3} \in \{0, \ldots, t-1\}^{n_1} \mid \mathbf{y_2} \oplus \mathbf{y_3} \in K(\alpha, t) \right\}. \end{displaymath}

The number of elements of $K(\alpha, t)$ which project to $\mathbf{y_2}$ equals $\# V(\alpha, t, \mathbf{y_2})$ because $K(\alpha, t)$ is a subset of $\{0, \ldots, t-1\}^{n_1+n_2}.$

The main idea for this section is that for all $t$ and $\mathbf{y_2}$ in $L_2(t),$ we try to express the proposition $\# V(\alpha, t, \mathbf{y_2})<m$ as a finite logical combination of parametric inequalities in $\mathbf{y_2}$. (It turns out that we require extra indeterminates. Later, we skip right to expressing the proposition $\sum_{\alpha} \# V(\alpha, t, \mathbf{y_2})<m$ as a finite logical combination of parametric inequalities.) 

The first step is to distinguish vectors $\mathbf{y_2}$ are not the projection of \textit{any} real vector in the set
\begin{align} \label{E1} \left\{ \mathbf{y_1} \in \mathbb{R}^{n_1+n_2} \mid A \mathbf{y_1} = \mathbf{b}(\alpha)(t) \right\}. \end{align}
The reason we want to distinguish these vectors is roughly explained in Example 4 in Section \ref{sec-ex} (specifically, the second half of Example 4). Later, we express this property in terms of parametric inequalities. 

For the second step, we observe that $V(\alpha, t, \mathbf{y_2})$ resembles the solution set of a system of linear equations in $n_1$ variables. The number of linear equations is $q,$ but the number of independent linear equations is $q-q_1,$ where $q_1$ is defined below. We want to count the number of solutions, although counts above $m$ don't matter to us. 

For a few reasons, it is easier to count the number of solutions of $q-q_1$ independent linear equations over a $q-q_1$ dimensional parametric hyperrectangle of lattice points $L$, rather than the $n_1$-dimensional parametric hyperrectangle we're starting with. (Parametric means that this set depends on $t$ in a ``polynomial'' way.)
\begin{itemize}
\item ``Usually,'' the number of solutions is always $0$ or $1.$ It suffices to check non-degeneracy condition.
\item ``Usually,'' there is a unique real vector in the affine span of $L$ which satisfies the linear equations. This unique real vector behaves ``polynomially'' as a function of $t.$ 
\item  The number of solutions equals $1$ iff this unique real vector lies in $L.$
\item Given a real vector in the affine span of $L,$ it is easy to determine of the vector is in $L.$ To do so, we just check that the vector lies in the bounds of the hyperrectangle and that the real vector is a lattice point. 
\end{itemize}

Our argument involves constructing a finite set of $q-q_1$ dimensional parametric hyperrectangles of lattice points which are disjoint subsets of $\{0, \ldots, t-1\}^{n_1}.$ We want to make sure that we don't lose information about $\#V(\alpha, t, \mathbf{y_2})$ when this count is less than $m.$ We prove that for all $m' < m,$ there are exactly $m'$ points in $V(\alpha, t, \mathbf{y_2})$ iff there are exactly $m'$ solutions in the $q-q_1$ dimensional sets. 

The third step is to try to construct a finite logical combination of parametric inequalities that is true if a certain $q-q_1$ dimensional set has exactly $1$ solution and false if that set has $0$ solutions. It turns out that we need extra indeterminates. The reason for this is roughly explained in the discussion after equation $($\ref{equiv3}$).$

The fourth step is to construct an auxiliary PILP $Q^*$ with many extra indeterminates. We prove that the size function and optimum value functions of $Q^*$are equal to the size function and optimum value functions of the parametric exclusion-type problem in Theorem \ref{main3} for $t \gg 0.$ This PILP is not in the form specified by Theorems \ref{Chen} and \ref{Shen}, but it is in almost the correct form.

The final step is to extend Theorems \ref{Chen} and \ref{Shen} by weakening the hypotheses to include $Q^*.$ This last step occurs in the next section.

\subsection{Step 1}
For all $t,$ let
\begin{displaymath} J(\alpha, t) := \{ \mathbf{y_2} \in \mathbb{R}^{n_2} \mid ( \exists \mathbf{y_3} \in \mathbb{R}^{n_1} \mid A (\mathbf{y_2} \oplus \mathbf{y_3})= \mathbf{b}(\alpha)(t)) \}. \end{displaymath}

If $\mathbf{y_2}$ is not in $J(\alpha, t),$ then $V(\alpha, t, \mathbf{y_2})$ is empty. To determine $J(\alpha, t)$, we perform a type of elimination in the variables $\mathbf{y_3}=(y_{n_2+1}, \ldots, y_{n_1+n_2})$ of the matrix $A,$ whose entries are integers. There exists an invertible integer matrix $D$ such that $DA$ has the block form 

\begin{displaymath} \left( \begin{array}{cc} B_1 & 0 \\ B_2 & B_3 \end{array} \right), \end{displaymath} where $B_1$ has $n_2$ columns and $q_1$ rows ($q_1$ depends on $A$) and the rows of $B_3$ are independent. 

Since $D$ is invertible, for all $t,$ $K(\alpha, t)$ equals 
\begin{align} \label{E15} \{y_1 \in \{0, \ldots, t-1\}^{n_1+n_2} \mid DA \mathbf{y_1} = D \mathbf{b(\alpha)}(t)\}. \end{align}

The notation $A, b(\alpha), V(\alpha, t, \mathbf{y_2}), J(\alpha, t), D, B_1, B_2, B_3, q, q_1$ will be used throughout this section.

\begin{proposition} \label{J1} For all $t,$ let $\mathbb{P}(\alpha,t,\mathbf{y_2})$ be the logical proposition  
\begin{displaymath}\bigwedge_{h=1}^{q_1} \left(\left( (-B_1 \mathbf{y_2})_h \le -(D\mathbf{b(\alpha)})_h(t) \right) \wedge \left((B_1 \mathbf{y_2})_h \le (D\mathbf{b(\alpha)})_h(t) \right) \right).
\end{displaymath} 

Then $\mathbf{y_2}$ is in $J(\alpha, t)$ if and only if $\mathbb{P}(\alpha,t, \mathbf{y_2}).$ \end{proposition}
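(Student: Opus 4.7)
The plan is to observe that Proposition \ref{J1} is essentially a restatement of Fourier--Motzkin elimination in the very clean form provided by the prefactoring matrix $D.$ Since $D$ is invertible over $\mathbb{Z},$ multiplying the equation $A(\mathbf{y_2} \oplus \mathbf{y_3}) = \mathbf{b}(\alpha)(t)$ on the left by $D$ gives an equivalent system $DA(\mathbf{y_2}\oplus\mathbf{y_3}) = D\mathbf{b}(\alpha)(t).$ Using the block decomposition, this splits into a top block of $q_1$ equations involving only $\mathbf{y_2},$
\begin{displaymath} B_1 \mathbf{y_2} = (D\mathbf{b}(\alpha)(t))_{1:q_1}, \end{displaymath}
and a bottom block
\begin{displaymath} B_2 \mathbf{y_2} + B_3 \mathbf{y_3} = (D\mathbf{b}(\alpha)(t))_{q_1+1:q}. \end{displaymath}

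The key observation is that, because the rows of $B_3$ are linearly independent, $B_3$ has full row rank, so for \emph{any} value of $\mathbf{y_2}$ the bottom block is a consistent linear system in $\mathbf{y_3}$ over $\mathbb{R},$ and admits at least one real solution $\mathbf{y_3}.$ Thus existence of $\mathbf{y_3} \in \mathbb{R}^{n_1}$ satisfying the original system is equivalent to satisfaction of the top block alone, which is exactly the condition characterizing $J(\alpha,t).$

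It remains to rewrite the conjunction of $q_1$ equalities $(B_1 \mathbf{y_2})_h = (D\mathbf{b}(\alpha))_h(t)$ for $h = 1, \ldots, q_1$ as a conjunction of $2q_1$ inequalities, by splitting each equality $x = y$ into the pair $x \le y$ and $-x \le -y.$ This yields exactly the logical proposition $\mathbb{P}(\alpha, t, \mathbf{y_2})$ in the statement.

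I do not anticipate any real obstacle: the proof is entirely mechanical once one writes out $DA$ in block form. The only point worth being careful about is the justification that the bottom block is always solvable, which follows from the fact that $B_3$ has full row rank (a property that was engineered into the construction of $D$). One should also remark that although $D$ and hence $B_1, B_2, B_3$ and $q_1$ depend on $A$ alone, the right-hand side $D\mathbf{b}(\alpha)(t)$ is a vector of polynomials in $t,$ so the components $(D\mathbf{b}(\alpha))_h(t)$ are in $\mathbb{Z}[u],$ making $\mathbb{P}(\alpha, t, \mathbf{y_2})$ a genuine finite conjunction of parametric inequalities in $\mathbf{y_2},$ as required for subsequent use of Theorems \ref{Chen} and \ref{Shen}.
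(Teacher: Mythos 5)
Your proof is correct and takes essentially the same approach as the paper: multiply by the invertible $D$ to obtain the block system, note that the bottom block is always solvable for $\mathbf{y_3}$ over $\mathbb{R}$ because the rows of $B_3$ are independent, and conclude that membership in $J(\alpha,t)$ is governed by the top block alone, which you then split into pairs of inequalities. The only quibble is terminological---since $J(\alpha,t)$ is the projection of an affine set cut out by equations alone, the elimination here is Gaussian rather than Fourier--Motzkin---but this does not affect the validity of the argument.
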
 

\begin{proof}
If $\mathbb{P}(\alpha,t, \mathbf{y_2}),$ then $B_1 \mathbf{y_2} = ((D\mathbf{b(\alpha)})_1(t), \ldots, (D\mathbf{b(\alpha)})_{q_1}(t)).$ 

Since $D$ is invertible, $A(\mathbf{y_2} \oplus \mathbf{y_3}) = \mathbf{b(\alpha)}(t)$ if and only if $DA(\mathbf{y_2} \oplus \mathbf{y_3}) = D\mathbf{b(\alpha)}(t),$ and this holds if and only if 
\begin{displaymath} B_2 \mathbf{y_2} + B_3 \mathbf{y_3} = ((D\mathbf{b(\alpha)})_{q_1+1}(t), \ldots, (D\mathbf{b(\alpha)})_{q}(t)).\end{displaymath}

The rows of $\mathbf{B_3}$ are independent, so there exists a real vector $\mathbf{y_3}$ such that the above equation is true, and $\mathbf{y_2}$ is in $J(\alpha, t).$ 

If $\neg \mathbb{P}(\alpha,t, \mathbf{y_2}),$ then $B_1 \mathbf{y_2} \neq ((D\mathbf{b(\alpha)})_1(t), \ldots, (D\mathbf{b(\alpha)})_{q_1}(t))^{\intercal}.$ For any $\mathbf{y_3},$
\begin{displaymath} DA (\mathbf{y_2}+\mathbf{y_3}) =\left( \begin{array}{cc} B_1 & 0 \\ B_2 & B_3 \end{array} \right)(\mathbf{y_2}+\mathbf{y_3})  = (B_1 \mathbf{y_2}) \oplus (B_2 \mathbf{y_2} + B_3 \mathbf{y_3}) \neq D\mathbf{b(\alpha)}(t),\end{displaymath}
so $\mathbf{y_2}$ is not in $J(\alpha, t).$
\end{proof}
Observe that $\mathbb{P}(\alpha,t, \mathbf{y_2})$ is a finite logical combination of parametric inequalities in $\mathbf{y_2}.$ It is possible that $q_1=0$ in which case $\mathbb{P}(\alpha,t, \mathbf{y_2})$ is the empty conjunction, which is always true.

\begin{proposition} The negation of a parametric inequality (by definition in \underline{integer} indeterminates) is a parametric inequality. \end{proposition}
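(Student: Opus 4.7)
The plan is to unpack the definition and use the integrality of the indeterminates to convert strict inequality into a non-strict one. A parametric inequality has the form $\mathbf{a}^{\intercal}(t)\,\mathbf{z} \le b(t)$ where $\mathbf{a} \in \mathbb{Z}[u]^n$, $b \in \mathbb{Z}[u]$, and $\mathbf{z}$ is a tuple of integer indeterminates. Its logical negation is the strict inequality $\mathbf{a}^{\intercal}(t)\,\mathbf{z} > b(t)$.

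The key observation is that, since $\mathbf{z}$ ranges over integer vectors, $\mathbf{a}(t)$ has integer entries (we treat $t$ as a positive integer so each $\mathbf{a}_i(t) \in \mathbb{Z}$), and $b(t) \in \mathbb{Z}$, both sides of the inequality take integer values. Hence $\mathbf{a}^{\intercal}(t)\,\mathbf{z} > b(t)$ is equivalent to $\mathbf{a}^{\intercal}(t)\,\mathbf{z} \ge b(t) + 1$. Multiplying by $-1$ reverses the inequality, giving
\begin{displaymath}
(-\mathbf{a})^{\intercal}(t)\,\mathbf{z} \le -b(t) - 1.
\end{displaymath}
Setting $\mathbf{a}' := -\mathbf{a} \in \mathbb{Z}[u]^n$ and $b' := -b - 1 \in \mathbb{Z}[u]$, the negated condition is expressed as $(\mathbf{a}')^{\intercal}(t)\,\mathbf{z} \le b'(t)$, which is a parametric inequality by definition.

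There is essentially no obstacle here; the only subtlety to flag is that the statement genuinely relies on the indeterminates being integer-valued (the paper emphasizes this in the definition), since for real-valued $\mathbf{z}$ the negation of $\mathbf{a}^{\intercal}(t)\,\mathbf{z} \le b(t)$ would be the strict inequality $\mathbf{a}^{\intercal}(t)\,\mathbf{z} > b(t)$, which cannot in general be written as a non-strict parametric inequality. Integrality of both sides is exactly what allows the passage from $>$ to $\ge b(t) + 1$.
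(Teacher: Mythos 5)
Your proof is correct and follows essentially the same route as the paper, which simply states the negation as $-\mathbf{a}^{\intercal}(t)\,\mathbf{z} \le -b(t)-1$; you spell out the intermediate steps (integrality $\Rightarrow$ $> b(t)$ becomes $\ge b(t)+1$, then multiply by $-1$) and correctly flag that integer-valued indeterminates are what make this work.
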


\begin{proof} The negation of $\mathbf{a}^{\intercal}(t) \mathbf{z} \le b(t),$ where $\mathbf{z}$ ranges over integer vectors, $b$ is an integer polynomial, and $\mathbf{a}$ is an integer polynomial vector is the parametric inequality $-\mathbf{a}^{\intercal}(t) \mathbf{z} \le -b(t)-1.$ \end{proof}

\subsection{Step 2}
We recursively define a rooted tree. The construction of this tree is motivated by the equivalence $($\ref{equiv1}$)$, although in this section, we meticulously define the nodes so that the cases are distinct. Each node is associated to a  parametric subset, or just ``subset", of $\{0, \ldots, t-1\}^{n_1}.$ The tree is the same for each $\alpha.$ Each node in the tree is labelled with a function 
\begin{displaymath}H: \{n_2+1 \ldots, n_1+n_2\} \rightarrow \mathbb{Z} \cup ( \mathbb{Z}_{\ge 0} \times \mathbb{Z}_{-}). \end{displaymath}

We use $S(H)$ to denote the node labelled with the function $H,$ and we use $S(H)(t)$ to denote the parametric subset associated with the node $S(H).$ For all $H$, we define
\begin{displaymath} U(H):= H^{-1}(\mathbb{Z}_{\ge 0} \times \mathbb{Z}_{-}), \quad U'(H) := H^{-1}(\mathbb{Z}). \end{displaymath} 
Let $d(H) := \# U'(H),$ the number of elements mapped to integers (rather than ordered pairs). As we will see, $d(H)$ this equals with the depth of $S(H)$ in the tree. We also say that the depth of the subset $S(H)(t)$ equals $d(H).$ The maximum depth is $n_1-q+q_1.$

We now define $S(H)(t)$ for all nodes $S(H).$ For all integers $k,$ let $I(k)$ be the polynomial $k$ if $k \ge 0$ and $u+k$ otherwise. For all $t,$ we define

\begin{align} \label{E2} S(H)(t):= \left \{ \mathbf{y_3} \in \{0, \ldots, t-1\}^{n_1} \left\vert \begin{array}{ll} \left(\forall u \in H^{-1}(\mathbb{Z}), y_u=I(H(u))(t)\right) \, \wedge \\ \left(\forall u \not \in H^{-1}(\mathbb{Z}), H(u)_1 \le y_u \le t+H(u)_2\right) \end{array} \right. \right\}. \end{align}

The set $S(H)(t)$ may be empty for some $H$ and $t.$ Basically, $S(H)(t)$ is a parametric hyperrectangle of lattice points in $\{0, \ldots, t-1\}^{n_1}.$ In each of the $n_1$ dimensions, the projection of $S(H)(t)$ is either one point or an interval of integers $[H(u)_1, t+H(u)_2].$ When the projection is one point, then either this point stays a fixed distance from the lower end of $[0, t-1]$ as $t$ changes \textit{or} this point stays a fixed distance from the upper end of the interval $[0, t-1]$. (This dichotomy is why we introduce the somewhat cumbersome notation $I(k)(t).$) When the projection is an interval, the lower end of the projection stays a fixed distance from the lower end of $[0, t-1]$, and the upper end of the projection stays a fixed distance from the upper end of $[0, t-1].$ 

The root is labeled with the function $H_0$ such that $H_0(u)=(0, -1)$ for all ordered pairs $u$ i.e. $S(H)(t)$ is all of $\{0, \ldots, t-1\}^{n_1}.$

We now define the children of the node $S(H),$ where $d(H)<n_1-q+q_1$. (Nodes $S(H)$ where $d(H) = n_1 - q + q_1$ are leaves.) 

Since $d(H)<n_1-q+q_1,$ $\# U(H) \ge q-q_1+1.$ $B_3$ is an integer matrix with $q-q_1$ rows and columns corresponding to $n_2+1, \ldots, n_1+n_2.$ Consider the columns corresponding to $U(H).$ They are linearly dependent, so there exists a nonzero integer combination of them which is zero, say $\sum_{u \in U(H)} w_{H, u} (B_3)_{i,u} = 0$ for all $i.$ Set $w_{H, u}=0$ for $u$ in $U'(H).$ Define $\mathbf{w_H}$ to be the vector given by the $w_{H, u}$ for $u = n_2+1, \ldots, n_1+n_2.$ (The vector $\mathbf{w_H}$ is an arbitrary choice; to make it determinate, one can take the lexicographically first dependent set and the smallest integer combination whose first nonzero coefficient is positive.) We use $\mathbf{w_H}$ later to refer to this vector. The vector $\mathbf{w_H}$ is indexed similarly to $\mathbf{y_3}$; indeed, our argument uses linear combinations of $\mathbf{y_3}$ and $\mathbf{w_H}.$

The node $S(H)$ has exactly one child for each choice of $v$ in $U(H)$ and an integer $k$ such that $w_{H, v} \neq 0$ and $0 \le k(w_{H, v})^{-1} < m,$ and no other children.  The function $H_{v, k}$ is specified by the following conditions. (For each $u$ in $\{n_2+1, \ldots, n_1 + n_2\},$ $H_{v, k}(u)$ is specified by exactly one of the following cases.) 

\begin{itemize}
\item[\textbf{(a)}] $H_{v, k}(v)=H(v)_1+k$ if $w_{H, v} >0$ and $H_{v, k}(v)=H(v)_2+k$ if $w_{H, v}<0.$

\item[\textbf{(b)}] For all $u$ in $U(H)$ such that $u < v$ and $w_{H, u} \neq 0,$ $H_{v, k}(u)=H(u)+(mw_{H, u}, 0)$ if $w_{H, u}>0$ and $H_{v, k}(u)=H(u)+(0, mw_{H, u})$ if $w_{H, u}<0.$

\item[\textbf{(c)}] $H_{v, k}(u)=H(u)$ for all $u$ in $\{n_2+1, \ldots, n_1+n_2\}$ not covered by \textbf{(b)} and which are not equal to $v.$
\end{itemize}

Observe that $H_{v, k}$ still has the correct form and has 1 higher depth than $H$ (since $H(v)$ is an ordered pair and $H_{v, k}(v)$ is not, and all other types are preserved). Each vertex has finitely many children, and the maximum depth is finite, so this tree is finite.


The notation $H(u), d(H), S(H), S(H)(t), I(k), U(H), U'(H), w_{H, u}, \mathbf{w_H}, H_{v, k}$ and the rooted tree will be used in the rest of this section.
\\

The following two propositions and corollaries prove that the $q-q_1$ dimensional subsets $S(H)(t)$ as $S(H)$ ranges over nodes of maximal depth do not lose any information about $\#V(\alpha, t, \mathbf{y_2})$ when this count is less than $m$

\begin{proposition} \label{child} Fix $t.$ Given $S(H)$ not of maximal depth, the children of $S(H)$ at $t$ partition the set of $\mathbf{y_3}$ in $S(H)(t)$ such that $\mathbf{y_3}-m \mathbf{w_H}$ is not in $S(H)(t),$ where we refer to the coordinates of $\mathbf{y_3}$ and $\mathbf{w_H}$ as $y_{n_2+1}, \ldots, y_{n_1+n_2}.$ \end{proposition}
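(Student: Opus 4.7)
The plan is to translate the ``out'' condition $\mathbf{y_3} - m\mathbf{w_H} \notin S(H)(t)$ into an explicit condition on the coordinates of $\mathbf{y_3}$ indexed by $U(H)$, and then show that the children encode a canonical way of locating which coordinate first witnesses that condition. Concretely, since the coordinates in $U'(H)$ satisfy $w_{H,u}=0$ and are fixed in $S(H)(t)$, they contribute nothing to the failure. For $u \in U(H)$ with $w_{H,u} \neq 0$, the hyperrectangle structure \eqref{E2} makes it immediate that $y_u - m w_{H,u}$ lies outside $[H(u)_1, t+H(u)_2]$ precisely when $y_u$ lies in the ``near-boundary strip'' $[H(u)_1, H(u)_1 + m w_{H,u} - 1]$ (if $w_{H,u}>0$) or $[t+H(u)_2+m w_{H,u}+1, t+H(u)_2]$ (if $w_{H,u}<0$). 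Thus the ``out'' set is exactly the set of $\mathbf{y_3} \in S(H)(t)$ for which at least one such coordinate $u$ lies in its near-boundary strip.

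Next I would define a map $\Phi$ from the ``out'' set to the children by setting $\Phi(\mathbf{y_3}) = (v,k)$, where $v$ is the smallest index $u \in U(H)$ with $w_{H,u} \neq 0$ such that $y_u$ lies in the near-boundary strip, and $k$ is the unique integer recording the position of $y_v$ within that strip; namely $k = y_v - H(v)_1$ when $w_{H,v}>0$ and $k = y_v - (t+H(v)_2)$ when $w_{H,v}<0$. A direct check of the inequalities shows $0 \le k(w_{H,v})^{-1} < m$, so $(v,k)$ does index a legitimate child. To verify $\mathbf{y_3} \in S(H_{v,k})(t)$ I would run through the three cases (a), (b), (c) of the definition of $H_{v,k}$: case (a) is exactly the definition of $k$; case (c) is trivial because $H_{v,k}(u)=H(u)$; case (b) uses the minimality of $v$, which says that for $u<v$ with $w_{H,u} \neq 0$ the coordinate $y_u$ is \emph{not} in its near-boundary strip, and so lies in the truncated range $[H(u)_1+m w_{H,u}, t+H(u)_2]$ or $[H(u)_1, t+H(u)_2+m w_{H,u}]$ as appropriate.

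For the reverse containment, I would take any $\mathbf{y_3} \in S(H_{v,k})(t)$ and check two things: (i) $\mathbf{y_3} \in S(H)(t)$, which follows because the ranges prescribed by $H_{v,k}$ are subsets of those prescribed by $H$; and (ii) $\mathbf{y_3} - m\mathbf{w_H} \notin S(H)(t)$, which is witnessed at coordinate $v$ by the same arithmetic used to choose $k$ (the fixed value of $y_v$ lies in the near-boundary strip, so subtracting $m w_{H,v}$ pushes it out of $[H(v)_1, t+H(v)_2]$). Finally, for disjointness of the children's sets, if $\mathbf{y_3}$ lay in both $S(H_{v_1,k_1})(t)$ and $S(H_{v_2,k_2})(t)$ with $v_1<v_2$, then clause (b) applied at $v_2$ forces $y_{v_1}$ to avoid its near-boundary strip, contradicting clause (a) applied at $v_1$; if $v_1=v_2$, the value of $y_{v_1}$ pins down $k$ uniquely. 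Together with the image computation, this gives $\Phi$ a two-sided inverse, proving the partition.

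The only genuinely fiddly part will be keeping the sign cases for $w_{H,u}$ straight: the two boundary cases are symmetric but the bookkeeping in (b) uses an asymmetric offset $(m w_{H,u},0)$ versus $(0, m w_{H,u})$, and I would want to package the ``near-boundary strip at coordinate $u$'' as a single named interval so that the case analysis stays readable. Once that shorthand is in place, every step reduces to a one-line inequality check, and the recursive drop in depth (the node $H_{v,k}$ has one more coordinate in $U'$ than $H$) is visible directly from case (a), which is what makes the tree well-defined and finite.
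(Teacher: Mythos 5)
Your proposal is correct and follows essentially the same route as the paper's proof: both characterize the ``out'' set via the coordinates where $\mathbf{y_3}-m\mathbf{w_H}$ exits the hyperrectangle, select the child by the minimal such coordinate $v$ with $k$ determined by the position of $y_v$ in the near-boundary strip, verify membership through cases \textbf{(a)}, \textbf{(b)}, \textbf{(c)} using minimality for \textbf{(b)}, and obtain disjointness from the clash between condition \textbf{(b)} and condition \textbf{(a)} (plus uniqueness of $k$ when the indices coincide). Your packaging via the named strips and the explicit map $\Phi$ is only a cosmetic reorganization of the same argument.
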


The proof of this proposition is generally an exercise in manipulating the constraints (\ref{E2}) and conditions $\textbf{(a)}, \textbf{(b)},$ and $\textbf{(c)}.$ One way to understand this proposition, (\ref{E2}), $\textbf{(a)}, \textbf{(b)},$ and $\textbf{(c)}$ is as follows. If $\mathbf{y_3}$ is in $S(H)(t)$ but $\mathbf{y_3} - m \mathbf{w_H}$ is not, then $\mathbf{y_3}$ is close to the boundary of a (parametric) hyperrectangle of lattice points. We can specify finitely many (parametric) sub-hyperrectangles of codimension $1$ whose union contains $\mathbf{y_3}$; moreover, the union exactly equals the possible vectors $\mathbf{y_3}$ such that $\mathbf{y_3} - m \mathbf{w_H}$ is not in $S(H)(t).$ If one explicitly writes out these sub-hyperrectangles, one should arrive at $\textbf{(a)}$ and $\textbf{(c)}.$ 

However, we insist on having disjoint children so that we can exactly count solutions later. Therefore, we look for the smallest index such that $\mathbf{y_3}-m\mathbf{w_H}$ is out of range in the coordinate with that index. Condition \textbf{(b)} is imposed so that the children of a given node are disjoint. If we removed condition \textbf{(b)} and just set $H_{v, k}(u)=H(u)$ for all $u \neq v,$ then a modified version of Proposition \ref{child} below would still be true with ``partition'' replaced by ``have a union which equals.'' 

Many details are painstakingly written out in this proof. The ideas in this proof are not necessary for the rest of this paper. If one finds this proposition obvious, then this proof can safely be skipped.

\begin{proof}[Proof of Proposition \ref{child}]
It's easy to see that the children of $S(H)$ at $t$ are subsets of $S(H)(t).$

First suppose that $\mathbf{y_3}$ and $\mathbf{y_3}-m \mathbf{w_H}$ are in $S(H)(t).$ We check, for all $v$ and $k$ such that $w_{H, v} \neq 0$ and $0 \le k(w_{H, v})^{-1}< m,$ that $\mathbf{y_3}$ is not in $S(H_{v, k})(t).$ If so, then $\mathbf{y_3}$ is not in any child of $S(H)$ at $t$ because these are all the possible children by definition.

\textbf{Case 1:} $w_{H,v} > 0.$ By definition, $w_{H, u} = 0$ for all $u$ in $U'(H),$ so $v$ is in $U(H).$ When $u=v$ in (\ref{E2}), the second line applies. It tells us that all points in $S(H)(t)$ have $v$-coordinate at least $H(v)_1,$ so $y_v, y_v-mw_{H,u} \ge H(v)_1,$ so $y_v \ge mw_{H,v} + H(v)_1.$ Therefore, $\mathbf{y_3}$ is not in $H_{v, k}$ for any $k$ with $0 \le k < m w_{H,v}.$ (Here are more details. Suppose that $\mathbf{y_3}$ is in $H_{v, k}$. Then by the first line of (\ref{E2}) setting $u=v,$ $y_v = I(H_{v, k}(v))(t).$ By condition \textbf{(a)}, $H_{v, k}(v) = H(v)_1+k,$ which is nonnegative, so $I(H_{v, k}(v))(t) = H_{v, k}(v)= H(v)_1+k,$ so $H(v)_1+k = y_v \ge m w_{H, v} + H(v)_1,$ so $k \ge m w_{H, v},$ a contradiction.)

\textbf{Case 2:} $w_{H,v}<0.$ Then $y_v-mw_v \le t+H(v)_2,$ so $y_v \le t+H(v)_2+mw_{H,v},$ and $\mathbf{y_3}$ is not in $H_{v, k}$ for any $k$ with $0 \ge k > m w_{H,v}$. 
\\

Conversely, suppose that $\mathbf{y_3}$ is in $S(H)(t)$ such that $\mathbf{y_3}-m \mathbf{w_H}$ is not in $S(H)(t).$ We want to find $(v, k)$ such that $\mathbf{y_3}$ is in $S(H_{v, k})(t).$ The set $S(H)(t)$ is defined by one clause for each $v$ in $n_2+1, \ldots, n_1+n_2$. Therefore, there exists at least one $v$ with $n_2+1 \le v \le n_1+n_2$ such that $\mathbf{y_3}-m\mathbf{w_H}$ does not satisfy the constraints (\ref{E2}) corresponding to $u=v.$ Let $v$ be the minimal such $v.$ 

Observe that for all $r$ such that $w_{H, r} = 0,$ $(\mathbf{y_3}-m\mathbf{w_H})_{r} = y_{r} = (\mathbf{y_3})_{r}.$ Recall that $\mathbf{y_3}$ is in $S(H)(t),$ so $\mathbf{y_3}$ satisfies the constraint in (\ref{E2}) for $u=r,$ so $\mathbf{y_3}-m\mathbf{w_H}$ does too, hence $v \neq r$ and $w_{H, v} \neq 0.$

\textbf{Case 1:} $w_{H, v}>0$. By definition, $\mathbf{y_3}-m \mathbf{w_H}$ does not satisfy the constraint for $u=v,$ so either $y_v - m w_{H, v} < H(u)_1$ or $y_v - m w_{H, v} > t + H(u)_2.$ On the other hand, $\mathbf{y_3}$ does satisfy the constraint for $u=v$ i.e. $H(u)_1 \le y_v \le t + H(u)_2.$ Since $w_{H, v} > 0,$ we have $y_v - m w_{H, v} < H(u)_1.$ Alternatively, we can write $H(v)_1 \le y_{v} < m w_{v}+ H(v)_1.$ This motivates us to prove that $\mathbf{y_3}$ lies in $S(H_{v, k})(t)$ where $0 \le k:=y_{v}-H(v)_1 < m w_{H,v}.$ 

To show that $\mathbf{y_3}$ lies in $S(H_{v, k})(t),$ we need to show that $\mathbf{y_3}$ satisfies the constraints in (\ref{E2}) for $H_{v, k}$ and $u = n_2+1, \ldots, n_1+n_2.$ To check such constraints, we need to know $H_{v, k}(u).$ Such values are specified by the conditions \textbf{(a)}, \textbf{(b)}, or \textbf{(c)}. Each $u$ in this range is described by exactly one of these conditions. (In these conditions, $H$ and $v$ coincide with the $H$ and $v$ in this proof.)

\textbf{Case 1.(a).} The only $u$ in this case is $u=v.$ Since $H_{v, k}(u)$ is an integer, we need to check $y_{v} \stackrel{}{=} I(H_{v, k}(v))(t).$ This is true by construction.

\textbf{Case 1.(b).} In this case, $u$ is in $U(H),$ $w_{H, u} \neq 0,$ and $u < v.$ By the minimality of $v,$ $\mathbf{y_3}-m\mathbf{w_H}$ satisfies the constraint for $H$ corresponding to $u.$ The vector $\mathbf{y_3}$ satisfies this constraint as well, so $H(u)_1 \le y_u, y_u - m w_{H, u} \le t + H(u)_2.$ 

If $w_{H, u} > 0,$ then by condition \textbf{(b)}, $H_{v, k}(u) = H(u) + (m w_{H, u}, 0).$ We need to check the constraint for $H_{v, k}$ for $u,$ which is $H(u)_1 + m w_{H, u} H_{v, k}(u)_1 \stackrel{}{\le} y_u \stackrel{}{\le} t + H_{v, k}(u)_2 = t + H(u)_2,$ which is immediate from the previous paragraph.

Similarly, if $w_{H, u} < 0,$ then by condition \textbf{(b)}, $H_{v, k}(u) = H(u) + (0, mw_{H, u}).$ We need to check the constraint for $H_{v, k}$ for $u,$ which is $H(u)_1 = H_{v, k}(u)_1 \stackrel{}{\le} y_u \stackrel{}{\le} t + H_{v, k}(u)_2 = t + H(u)_2 + m w_{H, u},$ which is again immediate.

\textbf{Case 1.(c).} In this case, $H_{v, k}(u) = H(u).$ The constraint for $H_{v, k}$ for $u$ is exactly the same as the constraint for $H$ for $u.$ This constraint is satisfied because $\mathbf{y_3}$ is in $S(H)(t).$

\textbf{Case 2:} $w_{H, v} < 0.$ This case is similar to Case 1, and we omit the details.
\\

To complete the proof, we show that $\mathbf{y_3}$ as above does not lie in $S(H_{u, k'})$ for any other valid choice of $u$ and $k'.$ We consider three cases based on which of conditions \textbf{(a)}, \textbf{(b)}, or \textbf{(c)} applies to $u.$

\textbf{Case (a).} In this case, $u=v,$ and $k' \neq k.$ Using \textbf{(a)} twice, $H_{v, k}(v) = H(v)_1+k \neq H(v)_1+k' = H_{v, k'}(v).$ Since $\mathbf{y_3}$ is in $S(H_{v, k}(t)),$ $y_v = H_{v, k}(v) \neq H_{v, k'}(v),$ so $\mathbf{y_3}$ does not satisfy the constraint for $H_{v, k'}$ for $u=v.$ 

\textbf{Case (b).} In this case, $u < v,$ $w_{H, u} \neq 0,$ and $0 \le k' (w_{H, u})^{-1} < m.$

\textbf{Case (b).1:} $w_{H, u} > 0.$ By condition \textbf{(b)}, $H_{v, k}(u)=H(u)+(mw_{H, u}, 0).$ Since $\mathbf{y_3}$ is in $S(H_{v, k})(t),$ the constraint for $u$ gives us $H(u)_1+mw_{H, u} \le y_u \le t + H(u)_2.$ By condition \textbf{(a)}, $H_{u, k'}(u)=H(u)_1+k' < H(u)_1+mw_{H, u}.$ Therefore, $\mathbf{y_3}$ does not satisfy the constraint for $S(H_{u, k'})$ for $u.$

\textbf{Case (b).2:} $w_{H, u} < 0.$ By condition \textbf{(b)},$H_{v, k}(u)=H(u)+(0, mw_{H, u}).$ Since $\mathbf{y_3}$ is in $S(H_{v, k})(t),$ the constraint for $u$ gives us $H(u)_1\le y_u \le t + H(u)_2 + mw_{H, u}.$ By condition \textbf{(a)}, $H_{u, k}(u)=H(u)_2+k' > H(u)_2 + m w_{H, u}.$ Therefore, $\mathbf{y_3}$ does not satisfy the constraint for $H_{u, k'}$ for $u.$

\textbf{Case (c).} Recall that $S(H_{u, k'})$ is a node only if $w_{H, u} \neq 0,$ so it remains to check $u$ with $w_{H, u} \neq 0$ and $u>v.$ Also recall that $0 \le k (w_{H, v})^{-1} < m.$

\textbf{Case (c).1:} $w_{H, v} > 0.$ Since $\mathbf{y_3}$ is in $S(H_{v, k}(t)),$ the constraint for $v$ gives us $y_v = I(H_{v, k}(v)+k) = I(H(v)_1+k) = H(v)_1+k.$ By condition \textbf{(b)}, $H_{u, k'}(v)=H(v)+(m w_{H, v}, 0).$ The constraint for $\mathbf{z_3}$ to be in $S(H_{u, k'})(t)$ for $v$ is $H_{u, k'}(v)_1 \le z_v \le y + H_{u, k'}(v)_2,$ which $\mathbf{y_3}$ does not satisfy because $\mathbf{y_v} = H(v)_1+k < H(v)_1+m w_{H, v} = H_{u, k'}(v)_1.$

\textbf{Case (c).2:} $w_{H, v} < 0.$ Since $\mathbf{y_3}$ is in $S(H_{v, k}(t)),$ the constraint for $v$ gives us $y_v = I(H_{v, k}(v)+k) = I(H(v)_2+k) = H(v)_2+k.$ By condition \textbf{(b)}, $H_{u, k'}(v)=H(v)+(0, m w_{H, v}).$ The constraint for $\mathbf{z_3}$ to be in $S(H_{u, k})(t)$ for $v$ is $H_{u, k}(v)_1 \le z_v \le y + H_{u, k}(v)_2,$ which $\mathbf{y_3}$ does not satisfy because $\mathbf{y_v} = H(v)_2+k > H(v)_2+m w_{H, v} = H_{u, k'}(v)_2.$


\end{proof}

\begin{corollary} For all $t,$ the subsets of maximal depth (at $t$) are disjoint. In other words, the sets $S(H)(t)$ as $S(H)$ ranges over nodes of maximal depth are disjoint. \end{corollary}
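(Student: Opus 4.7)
The plan is to deduce this corollary directly from Proposition \ref{child} by inducting on depth. The key observations are (i) Proposition \ref{child} asserts that the children of any non-leaf node partition a certain subset of the parent's set, so in particular distinct children of the same node have disjoint associated sets; and (ii) by construction (and as noted at the start of the proof of Proposition \ref{child}), the set $S(H')(t)$ of any child $S(H')$ of $S(H)$ is contained in $S(H)(t).$

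I would then prove by induction on $d$ the following stronger statement: for every $t$ and every $d$ with $0 \le d \le n_1 - q + q_1,$ the sets $S(H)(t)$ as $S(H)$ ranges over the nodes at depth $d$ in the tree are pairwise disjoint. The base case $d=0$ is trivial since the root is the unique node of depth $0.$ For the inductive step, fix two distinct nodes $S(H_1)$ and $S(H_2)$ at depth $d+1.$ If their parents are distinct, then by the inductive hypothesis the parents' associated sets are disjoint, and since $S(H_i)(t)$ is a subset of its parent's set by observation (ii), we get $S(H_1)(t) \cap S(H_2)(t) = \varnothing.$ If $S(H_1)$ and $S(H_2)$ share a parent, then they are distinct children of the same node, and observation (i) gives $S(H_1)(t) \cap S(H_2)(t) = \varnothing.$

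Applying the induction at $d = n_1 - q + q_1$ yields exactly the statement of the corollary, since every node of maximal depth lies at depth $n_1 - q + q_1$ by construction. I do not expect any real obstacle here: the argument is a standard tree-induction once Proposition \ref{child} is in hand, and the only thing worth being careful about is using the subset-of-parent property together with the partition property to combine the two cases of distinct vs.\ shared parents.
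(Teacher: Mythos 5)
Your proposal is correct and is essentially the argument the paper intends: the corollary is meant to follow immediately from Proposition \ref{child}, using exactly the two facts you isolate (distinct children of a common node are disjoint because they form a partition, and each child's set is contained in its parent's set), and your induction on depth is the standard way to formalize that.
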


\begin{remark}One can also check that for any function $H$ of the correct form, there exists $N_H$ such that for all $t>N,$ $S(H)(t)$ is nonempty. Then for all $t$ greater than $N_H$ for all $H$ in the tree, of which there are finitely many, all subsets $S(H)(t)$ of maximal depth are nonempty. This shows that the function labels of all of the leaves of the tree are distinct. This allows us to uniquely refer to each node of maximal depth by its function label.\end{remark}

\begin{proposition} \label{push} Fix $t$ and $\mathbf{y_2}$ in $\mathbb{Z}^{n_2}.$ Assume that less than $m$ elements of $V(\alpha,t, \mathbf{y_2})$ lie in a subset of maximal depth. Then an element of $V(\alpha,t, \mathbf{y_2})$ which lies in any subset $S(H)(t)$ of the tree lies in a subset of maximal depth.
\end{proposition}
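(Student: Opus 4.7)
The plan is to prove an equivalent reformulation by induction on the co-depth (maximum depth minus depth) of $S(H)$: for every node $S(H)$ in the tree and every $\mathbf{y_3}\in V(\alpha,t,\mathbf{y_2})\cap S(H)(t),$ either $\mathbf{y_3}$ lies in a subset of maximal depth, or at least $m$ distinct elements of $V(\alpha,t,\mathbf{y_2})$ do. This implies the proposition: if fewer than $m$ elements of $V(\alpha,t,\mathbf{y_2})$ lie in maximal-depth subsets, then the first alternative must hold for every such $\mathbf{y_3}.$ Since the root's subset is all of $\{0,\ldots,t-1\}^{n_1}$ and contains $V(\alpha,t,\mathbf{y_2}),$ the stated proposition follows.

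A critical preliminary observation is that translating $\mathbf{y_3}$ by any integer multiple of $\mathbf{w_H}$ preserves membership in $V(\alpha,t,\mathbf{y_2}),$ provided the translate remains in $\{0,\ldots,t-1\}^{n_1}.$ Indeed, $B_3\mathbf{w_H}=0$ by construction, and the last $n_1$ columns of $DA$ are zero in the first $q_1$ rows, so $DA(\mathbf{0}\oplus\mathbf{w_H})=0$ and hence $A(\mathbf{0}\oplus\mathbf{w_H})=0.$ Thus the equation $A(\mathbf{y_2}\oplus\mathbf{y_3})=\mathbf{b}(\alpha)(t)$ defining $K(\alpha,t)$ is invariant under $\mathbf{y_3}\mapsto\mathbf{y_3}+k\mathbf{w_H}$ for every integer $k.$

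The base case $H$ a leaf is immediate. For the inductive step, suppose $\mathbf{y_3}\in V(\alpha,t,\mathbf{y_2})\cap S(H)(t)$ with $H$ not a leaf. If $\mathbf{y_3}$ lies in some child $S(H'),$ the inductive hypothesis applied to $S(H')$ finishes the argument. Otherwise, Proposition \ref{child} in contrapositive form gives $\mathbf{y_3}-m\mathbf{w_H}\in S(H)(t).$ Since $\mathbf{w_H}$ vanishes on $U'(H),$ translates $\mathbf{y_3}+k\mathbf{w_H}$ preserve the fixed coordinates of $S(H)(t),$ and the interval constraints on $U(H)$-coordinates carve out an integer interval $[k_1,k_2]$ of values of $k$ for which $\mathbf{y_3}+k\mathbf{w_H}\in S(H)(t).$ Both $k=0$ and $k=-m$ lie in this interval, so $k_2-k_1\geq m.$ For $k\in\{k_1,k_1+1,\ldots,k_1+m-1\},$ we have $k-m<k_1,$ so $\mathbf{y_3}+(k-m)\mathbf{w_H}\notin S(H)(t),$ and Proposition \ref{child} places each $\mathbf{y_3}+k\mathbf{w_H}$ in some child of $S(H).$ These are $m$ distinct elements of $V(\alpha,t,\mathbf{y_2}),$ each lying in a child of $S(H).$ Applying the inductive hypothesis to each either immediately yields $\geq m$ elements of $V(\alpha,t,\mathbf{y_2})$ in maximal-depth subsets, or places all $m$ of them in such subsets directly; either way the inductive step is finished.

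The main technical point I anticipate is verifying that the interval $[k_1,k_2]$ really contains $[-m,0].$ This reduces coordinate-by-coordinate: for $u\in U'(H)$ the coordinate $y_u$ is unaffected by the translation, and for $u\in U(H)$ with $w_{H,u}\neq 0,$ the assumption that $(\mathbf{y_3})_u$ and $(\mathbf{y_3})_u-mw_{H,u}$ both lie in $[H(u)_1,t+H(u)_2]$ forces $(\mathbf{y_3})_u-jw_{H,u}$ to lie in the same interval for every $0\leq j\leq m,$ by a routine monotonicity argument in $j.$ With this coordinate estimate in place, the induction on co-depth proceeds cleanly.
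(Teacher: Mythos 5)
Your proof is correct and follows essentially the same route as the paper's: translate along $\mathbf{w_H}$ (which preserves $V(\alpha,t,\mathbf{y_2})$ since $A(\mathbf{0}\oplus\mathbf{w_H})=0$), use Proposition \ref{child} to place $m$ distinct translates near the boundary of $S(H)(t)$ into children, and recurse down the tree by induction. Your contradiction-free disjunctive reformulation and the explicit interval $[k_1,k_2]$ bookkeeping are only cosmetic differences from the paper's backwards-induction-by-contradiction (and they keep the sign convention of Proposition \ref{child} straight), not a different method.
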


\begin{proof}  
We proceed by induction on the depth $d$ of $S(H)$ backwards, from the maximal depth to $0.$ The base case, $d=n_1-q+q_1,$ is trivial. 

Now suppose that $0 \le d<n_1 -q + q_1$ and that the proposition is true for deeper nodes. Assume, for the sake of contradiction, that $\mathbf{z_3}:=(z_{n_2+1}, \ldots, z_{n_1+n_2})$ lies in $V(\alpha,t, \mathbf{y_2})$ and $S(H)(t)$ where $d(H)=d$ but not in any subset of maximal depth. The root contains only lattice points, so $\mathbf{z_3}$ is a lattice point. If $\mathbf{z_3}$ lies in a child of $S(H)(t),$ then we get a contradiction using the induction hypothesis.

Therefore, $\mathbf{z_3}$ does not lie in a child of $S(H)(t).$ let $U(H), U'(H)$ and $\mathbf{w_H}$ be as above. By Proposition \ref{child}, $\mathbf{z_3} + m \mathbf{w_H}$ lies in $S(H)(t).$ It is easy to see that in fact $\mathbf{z_3}+m' \mathbf{w_H}$ lies in $S(H)(t)$ for integers $m'$ with $0 \le m' \le m.$ Consider the sequence of vectors $\{\mathbf{z_3}+m'\mathbf{w_H}\}_{m'=0}^{\infty}.$ This is a sequence of distinct integer vectors in which the first $m+1$ terms lie in $S(H)(t).$ The sequence eventually leaves $S(H)(t)$ because $\mathbf{w_H}$ is nonzero. Therefore, there exists a positive integer $m_0$ such that the terms $m_0, m_0+1, \ldots, m_0+m-1$ all lie in $S(H)(t)$ but $m_0+m$ does not. Since $S(H)(t)$ is a convex set among lattice points, none of the terms after $m_0+m$ lie in $S(H)(t)$ either. By Proposition \ref{child}, the vectors $\mathbf{z_3}+m'\mathbf{w_H}$ for $m'=m_0, \ldots, m_0+m-1$ all lie in a child of $S(H)(t),$ and they are all distinct. Since a child of $S(H)(t)$ has greater depth, the induction hypothesis implies that all of these vectors lie in a subset of maximal depth. 

On the other hand, $\mathbf{z_3}$ lies in $V(\alpha,t, \mathbf{y_2}),$ so $\mathbf{y_2} \oplus \mathbf{z_3}$ lies in $K(\alpha, t).$ By (\ref{E15}),
\begin{displaymath} \left( \begin{array}{cc} B_1 & 0 \\ B_2 & B_3 \end{array} \right) (\mathbf{y_2} \oplus \mathbf{z_3})=D \mathbf{b}(\alpha)(t). \end{displaymath}

Here, $\mathbf{y_2}$ coincides with the first columns of blocks, and $\mathbf{z_3}$ coincides with the second. By the construction of $\mathbf{w_H},$ $B_3 \mathbf{w_H}=0,$ so for all $m',$
\begin{displaymath} \left( \begin{array}{cc} B_1 & 0 \\ B_2 & B_3 \end{array} \right) (\mathbf{y_2} \oplus (\mathbf{z_3}+m' \mathbf{w_H}))=D \mathbf{b}(\alpha)(t). \end{displaymath}

For $m'=m_0, \ldots, m_0+m-1,$ $\mathbf{z_3}+m'\mathbf{w_H}$ is in $S(H)(t)$, which is a subset of $\{0, \ldots, t-1\}^{n_1},$ so $\mathbf{y_2} \oplus (\mathbf{z_3}+m' \mathbf{w_H})$ is in $\{0, \ldots, t-1\}^{n_1+n_2}.$ By (\ref{E15}), $\mathbf{z_3}+m' \mathbf{w_H}$ is in $V(\alpha,t, \mathbf{y_2}).$ This is a contradiction because we assumed that less than $m$ different vectors are in $V(\alpha,t, \mathbf{y_2})$ lie in a subset of maximal depth. Therefore, an element of $V(\alpha,t, \mathbf{y_2})$ which lies in a subset of the tree lies in a subset of maximal depth.
\end{proof}

\begin{corollary} \label{c321} Fix $t.$ For all $\mathbf{y_2}$ and integers $m^*$ with $0 \le m^* < m,$ $\# V(\alpha, t, \mathbf{y_2})=m^*$ if and only if exactly $m^*$ distinct elements of $V(\alpha,t, \mathbf{y_2})$ lie in a subset of maximal depth.
\end{corollary}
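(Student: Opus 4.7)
The plan is to derive this corollary as an immediate combination of Proposition \ref{push} and the preceding corollary (disjointness of maximal-depth subsets), using the key observation that the root $S(H_0)(t)$ of the tree equals $\{0,\ldots,t-1\}^{n_1}$, which by definition contains $V(\alpha,t,\mathbf{y_2})$. Because the maximal-depth subsets are pairwise disjoint, the count of elements of $V(\alpha,t,\mathbf{y_2})$ that lie in maximal-depth subsets is simply the sum over all maximal-depth $S(H)(t)$ of $\#(V(\alpha,t,\mathbf{y_2}) \cap S(H)(t))$, with no double-counting.

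For the forward direction, I would assume $\# V(\alpha,t,\mathbf{y_2}) = m^* < m$. Then the number of elements of $V(\alpha,t,\mathbf{y_2})$ in any union of maximal-depth subsets is at most $m^* < m$, so the hypothesis of Proposition \ref{push} is satisfied. Since every element of $V(\alpha,t,\mathbf{y_2})$ lies in the root of the tree, Proposition \ref{push} implies that every element of $V(\alpha,t,\mathbf{y_2})$ lies in some maximal-depth subset. By disjointness, the count of elements in maximal-depth subsets is then exactly $m^*$.

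For the backward direction, I would assume exactly $m^*$ distinct elements of $V(\alpha,t,\mathbf{y_2})$ lie in some maximal-depth subset, with $m^* < m$. The hypothesis of Proposition \ref{push} is again satisfied, so every element of $V(\alpha,t,\mathbf{y_2})$ that lies in any node of the tree lies in a maximal-depth subset. Applying this to the root, which contains all of $V(\alpha,t,\mathbf{y_2})$, we conclude that every element of $V(\alpha,t,\mathbf{y_2})$ lies in some maximal-depth subset. Combined with disjointness, $\# V(\alpha,t,\mathbf{y_2})$ equals the count of its elements in maximal-depth subsets, which is $m^*$.

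There is essentially no obstacle here: both directions reduce immediately once one recognizes that $V(\alpha,t,\mathbf{y_2}) \subseteq S(H_0)(t)$ and that $m^* < m$ matches the hypothesis of Proposition \ref{push} in either direction. All the real work has already been done in Propositions \ref{child} and \ref{push}; this corollary is just the clean repackaging of those results into a precise quantitative equivalence that will let the later argument encode ``$\# V(\alpha,t,\mathbf{y_2}) = m^*$'' by counting solutions over the finitely many $(q-q_1)$-dimensional parametric hyperrectangles.
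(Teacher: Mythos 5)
Your proposal is correct and follows essentially the same route as the paper: both directions are immediate from Proposition \ref{push} once one notes that the root $S(H_0)(t)=\{0,\ldots,t-1\}^{n_1}$ contains $V(\alpha,t,\mathbf{y_2})$, with the hypothesis ``fewer than $m$ elements in maximal-depth subsets'' verified trivially in each direction. Your appeal to disjointness of the maximal-depth subsets is harmless but not actually needed here, since the statement already counts distinct elements of $V(\alpha,t,\mathbf{y_2})$.
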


\begin{proof} Since the root of the tree is $\{0, \ldots, t-1\}^{n_1},$ the backwards direction is simply the above proposition. For the forwards direction, at most $m^* < m$ elements of $V(\alpha,t, \mathbf{y_2})$ lie in a subset of maximal depth, so the above proposition shows that all elements of $V(\alpha,t, \mathbf{y_2})$ lie in a subset of maximal depth. \end{proof}

\subsection{Step 3}
The following two propositions show that for all $t \gg 0, \alpha$ in $\mathfrak{S},$ $\mathbf{y_2}$ in $J(\alpha, t),$ and $S(H)$ of maximal depth, there exists a unique real vector that lies in the affine span of $S(H)(t)$ and $V_2(\alpha, t, \mathbf{y_2}).$ Moreover, this unique real vector behaves polynomially as a function of $t$ (keeping the other data fixed). The vector of rational polynomials that generates this vector is also constructed.

\begin{proposition} \label{det1} For any node $S(H),$ the columns of $B_3$ corresponding to $U(H)=H^{-1}(\mathbb{Z}_{\ge 0} \times \mathbb{Z}_{-})$ form a rank $q-q_1$ (full rank) matrix. \end{proposition}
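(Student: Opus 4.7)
The plan is to prove this by induction on the depth $d(H)$, using the fact that each time we pass from a node to a child, we remove exactly one index from $U(H)$, and that index was chosen so that the corresponding column of $B_3$ is a linear combination of the columns indexed by the other elements of $U(H)$.

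For the base case $d(H) = 0$, the only node is the root $H_0$, and $U(H_0) = \{n_2+1, \ldots, n_1+n_2\}$. So the columns of $B_3$ indexed by $U(H_0)$ are all the columns of $B_3$, which has rank $q-q_1$ since its rows are independent by construction.

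For the inductive step, suppose the statement holds for all nodes of depth less than $d$, and let $S(H)$ be a node with $d(H) = d > 0$. Then $H = H'_{v,k}$ for some parent $H'$ with $d(H') = d - 1$ and some valid choice of $v \in U(H')$ with $w_{H',v} \neq 0$ (and suitable $k$). Inspecting the conditions (a), (b), (c) defining $H'_{v,k}$, only the coordinate $v$ is changed from an ordered pair (condition (a)), while (b) and (c) keep the other ordered-pair coordinates as ordered pairs; hence $U(H) = U(H') \setminus \{v\}$. By construction of $\mathbf{w_{H'}}$, we have
\begin{displaymath}
\sum_{u \in U(H')} w_{H',u}\, (B_3)_{\cdot, u} = 0,
\end{displaymath}
and since $w_{H',v} \neq 0$ we can solve for
\begin{displaymath}
(B_3)_{\cdot, v} = -\frac{1}{w_{H',v}} \sum_{u \in U(H'), u \neq v} w_{H',u}\, (B_3)_{\cdot, u}.
\end{displaymath}
Hence the column space spanned by the columns indexed by $U(H)$ equals that spanned by the columns indexed by $U(H')$, which by the inductive hypothesis has rank $q - q_1$.

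The main conceptual step is simply verifying the bookkeeping $U(H) = U(H') \setminus \{v\}$ from the three conditions defining $H'_{v,k}$; once this is observed, the rank statement follows immediately from the fact that the child is constructed precisely from a nontrivial linear dependency among the columns of $B_3$ indexed by $U(H')$, with $v$ a coordinate of nonzero coefficient. No separate obstacle arises for the choice of $k$, since $k$ only affects the fixed value $H(v)$ and not the index set $U(H)$.
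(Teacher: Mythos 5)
Your proof is correct and follows essentially the same inductive argument as the paper: both proceed by induction on depth, use the base case that the rows of $B_3$ are independent, and for the inductive step observe that passing from a parent $H'$ to a child removes from $U(H')$ exactly one index $v$ whose column of $B_3$ was, by the construction of $\mathbf{w_{H'}}$, a linear combination of the remaining columns indexed by $U(H')$, so the rank is preserved. Your spelled-out bookkeeping of $U(H) = U(H') \setminus \{v\}$ via conditions \textbf{(a)}, \textbf{(b)}, \textbf{(c)} and the explicit expression for $(B_3)_{\cdot,v}$ is a minor elaboration of the same reasoning.
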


\begin{proof} We use induction based on the depth, increasing. The induction hypothesis is that for an integer $d$ with $0 \le d < n_1-q+q_1,$ the proposition is true for all nodes of depth $d.$ Since the rows of $B_3$ are independent, this is true for depth $0$ (the base case).

Suppose that $S(H_{u, k})$ is a child of $S(H)$ and that the proposition is true for $H.$ Let $B_H$ be the matrix of columns of $B_3$ which correspond to $U(H)$ in order and no other columns. Then $B_H$ has full rank. By construction, $U(H_{u, k})=U(H) \setminus \{u\}.$ The $u^{\text{th}}$ column of $B_3$ is dependent on the other columns corresponding to $U(H),$ and all of these columns appear in $B_H,$ and so the removing the corresponding column in $B_H$ leaves a full rank matrix, as desired. \end{proof}

For $\alpha$ in $\mathfrak{S}$ and $S(H)$ of maximal depth, we construct a vector of rational polynomials \\ $\mathbf{Y_3}(\alpha, H)(\mathbf{y_2}, t)$. in other words, all components of $\mathbf{Y_3}(\alpha, H)$ is a polynomial of the variables $\mathbf{y_2}$ and $t.$  Our convention is that $\mathbf{Y_3}(\alpha, H)$ has components $(Y_{n_2+1}, \ldots Y_{n_1+n_2}).$ We now construct the vector $\mathbf{Y_3}(\alpha, H).$ 

Let $\mathbf{Z_3}=(Z_{n_2+1}, \ldots, Z_{n_1+n_2})$ be an auxiliary polynomial vector. For all $u$ in $U'(H),$ let $Y_u$ and $Z_u$ be the polynomial $I(H(t)).$ For $u$ in $U(H)$ (all other $u$ in $\{n_2+1, \ldots, n_1+n_2\}$), let $Z_u$ be $0.$ 

Let $U(H),$ sorted ascending, be $\{v_1, \ldots, v_{|U(H)|}\},$ where $|U(H)|=q-q_1.$ Define $B_H$ as in Proposition \ref{det1}; it is an invertible matrix of integers. With our notation $v_1, \ldots, v_{|U(H)|},$ we can also write $B_H$ as 
\begin{displaymath}\{(B_3)_{q_1+i, v_j}\}_{i=1,j=1}^{q-q_1, q-q_1}.\end{displaymath}

For $h$ from $1$ to $q-q_1,$ let $Y_{v_h}$ be the polynomial
\begin{align*} \label{E3} \left(B_H^{-1} \left(\left(\left(D\mathbf{b(\alpha)}\right)_{q_1+1}, \ldots, \left(D\mathbf{b(\alpha)}\right)_{q} \right) - B_2 \mathbf{y_2} - B_3 \mathbf{Z_3} \right)\right)_h.\end{align*}

It is easy to see that $\mathbf{Y_3}(\alpha, H)$ is a vector, each coordinate of which is a rational polynomial in $\mathbf{y_2}$ and $t.$

\begin{remark}The above construction of $\mathbf{Y_3}(\alpha, H)$ may appear very dense, but it is really not surprising. Conceptually, when we fix $\mathbf{y_2}$ and $n_1-q+q_1$ coordinates of $\mathbf{y_3},$ we have $q-q_1$ unknowns and $q-q_1$ linear equations given by 
\begin{align*}\left(\begin{array}{cc}B_2 & B_3 \end{array}\right)(\mathbf{y_2} \oplus \mathbf{y_3}) = D\mathbf{b}(\alpha)(t).\end{align*}

The columns of $B_3$ that are acting on the unknowns are exactly those corresponding to $U(H).$ By Proposition \ref{det1}, these columns are independent. Therefore, there is a unique rational solution, which changes predictably as a function of $t.$
\end{remark}

\begin{proposition} \label{J2} For all $t,$ $\alpha$ in $\mathfrak{S},$ $\mathbf{y_2}$ in $J(\alpha, t),$ and $S(H)$ of maximal depth, the unique real vector $\mathbf{y_3}$ such that $A (\mathbf{y_2} \oplus \mathbf{y_3}) = \mathbf{b(\alpha)}(t),$ \textit{and} for all $u$ in $U'(H),$ $y_u=I(H(u))(t),$ is given by $\mathbf{y_3}=\mathbf{Y_3}(\alpha, H)(\mathbf{y_2}, t).$ \end{proposition}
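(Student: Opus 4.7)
The plan is to exploit the block decomposition $DA = \begin{pmatrix} B_1 & 0 \\ B_2 & B_3 \end{pmatrix}$ and then split the coordinates of $\mathbf{y_3}$ according to the partition $\{n_2+1,\ldots,n_1+n_2\} = U'(H) \sqcup U(H)$, using Proposition \ref{det1} to invert the resulting square subsystem. Existence and uniqueness then reduce to the invertibility of $B_H$.

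First, since $D$ is invertible, the equation $A(\mathbf{y_2}\oplus\mathbf{y_3}) = \mathbf{b}(\alpha)(t)$ is equivalent to $DA(\mathbf{y_2}\oplus\mathbf{y_3}) = D\mathbf{b}(\alpha)(t)$, which splits into the top-block equation $B_1 \mathbf{y_2} = ((D\mathbf{b}(\alpha))_1(t),\ldots,(D\mathbf{b}(\alpha))_{q_1}(t))$ and the bottom-block equation
\begin{displaymath} B_2 \mathbf{y_2} + B_3 \mathbf{y_3} = ((D\mathbf{b}(\alpha))_{q_1+1}(t),\ldots,(D\mathbf{b}(\alpha))_{q}(t)). \end{displaymath}
The top block is automatically satisfied because $\mathbf{y_2} \in J(\alpha,t)$, by Proposition \ref{J1}. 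So the condition in the statement is equivalent to: $\mathbf{y_3}$ satisfies the bottom-block equation above \emph{and} $y_u = I(H(u))(t)$ for every $u \in U'(H)$.

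Next, I would decompose $\mathbf{y_3}$ along the coordinates in $U'(H)$ (whose values are already forced) and those in $U(H)$ (which remain to be determined). Define $\mathbf{Z_3}$ as in the construction preceding Proposition \ref{J2}: it agrees with $\mathbf{y_3}$ on $U'(H)$ and is zero on $U(H)$. Then $B_3 \mathbf{y_3} - B_3 \mathbf{Z_3}$ depends only on the coordinates $y_{v_1},\ldots,y_{v_{q-q_1}}$ indexed by $U(H) = \{v_1,\ldots,v_{q-q_1}\}$, and equals $B_H (y_{v_1},\ldots,y_{v_{q-q_1}})^{\intercal}$, where $B_H$ is the submatrix of $B_3$ obtained by keeping the columns indexed by $U(H)$. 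Substituting, the bottom-block equation becomes
\begin{displaymath} B_H\,(y_{v_1},\ldots,y_{v_{q-q_1}})^{\intercal} = \bigl((D\mathbf{b}(\alpha))_{q_1+1}(t),\ldots,(D\mathbf{b}(\alpha))_{q}(t)\bigr) - B_2 \mathbf{y_2} - B_3 \mathbf{Z_3}. \end{displaymath}

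Finally, since $S(H)$ has maximal depth, Proposition \ref{det1} tells us that $B_H$ is a square matrix of full rank $q-q_1$, hence invertible over $\mathbb{Q}$. Therefore the above system has a unique real solution
\begin{displaymath} (y_{v_1},\ldots,y_{v_{q-q_1}})^{\intercal} = B_H^{-1}\Bigl(\bigl((D\mathbf{b}(\alpha))_{q_1+1},\ldots,(D\mathbf{b}(\alpha))_{q}\bigr) - B_2 \mathbf{y_2} - B_3 \mathbf{Z_3}\Bigr), \end{displaymath}
and the coordinates indexed by $U'(H)$ are forced to be $y_u = I(H(u))(t)$. Reading off each component, this matches the definition of $\mathbf{Y_3}(\alpha,H)(\mathbf{y_2},t)$ coordinate by coordinate. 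Uniqueness is automatic from the invertibility of $B_H$. There is no real obstacle: the only subtlety is the bookkeeping that ensures $B_3 \mathbf{y_3}$ decomposes cleanly into the fixed part $B_3 \mathbf{Z_3}$ plus the unknown part $B_H(y_{v_1},\ldots,y_{v_{q-q_1}})^{\intercal}$, which is exactly the motivation for defining $\mathbf{Z_3}$ to vanish on $U(H)$.
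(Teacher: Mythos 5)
Your proof is correct and takes essentially the same approach as the paper: both hinge on the block form $DA = \begin{pmatrix} B_1 & 0 \\ B_2 & B_3 \end{pmatrix}$, the fact that $J(\alpha,t)$ disposes of the top block via Proposition \ref{J1}, and the invertibility of $B_H$ from Proposition \ref{det1}. The only cosmetic difference is that you solve the reduced $q-q_1$ square system once and read off existence and uniqueness simultaneously, whereas the paper first verifies that the stated formula satisfies the equations and then argues uniqueness by a separate subtraction argument.
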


\begin{proof} We first check that $\mathbf{y_3}:=\mathbf{Y_3}(\alpha, H)(\mathbf{y_2}, t)$ satisfies the equation. Since $D$ is invertible, it suffices to check
\begin{displaymath} \left( \begin{array}{cc} B_1 & 0 \\ B_2 & B_3 \end{array} \right) (\mathbf{y_2} \oplus \mathbf{Y_3}(\alpha, H)(\mathbf{y_2}, t))=D \mathbf{b}(\alpha)(t). \end{displaymath}

By Proposition \ref{J1}, the first $q_1$ coordinates match. It's easy to see that the equation in the other $q-q_1$ coordinates is given by
\begin{displaymath} B_2 \mathbf{y_2} + B_3 \mathbf{Z_3}(t) + B_H \left( Y_{v_1}(y), \ldots, Y_{v_{|U(h)|}}(t)\right) = \left(\left(D\mathbf{b(\alpha)}\right)_{q_1+1}(t), \ldots, \left(D\mathbf{b(\alpha)}\right)_{q}(t) \right), \end{displaymath} 
and this is true by construction.

We now check that this is unique. If not, we see, by subtraction, that there exists a nonzero vector $\mathbf{y_3'} \in \mathbb{R}^{n_1}$ such that for all $u$ in $U'(H),$ $y_u'=0,$ and 
\begin{displaymath} \left( \begin{array}{cc} B_1 & 0 \\ B_2 & B_3 \end{array} \right) ( \mathbf{0} \oplus \mathbf{y_3'} ) = 0. \end{displaymath}

This contradicts the fact that $B_H$ is invertible, so indeed $\mathbf{Y_3}(\alpha, H)(\mathbf{y_2}, t)$ is the unique vector. 
\end{proof}

\begin{proposition} \label{J3} Given $t, \alpha$ in $\mathfrak{S},$ $\mathbf{y_2}$ in $J(\alpha, t),$ and $H$ of maximal depth, define $\mathbf{Y_3}(\alpha, H)$ as above. Let
\begin{displaymath}\mathbb{P}(\alpha,t, \mathbf{y_2}, H) := \left(\bigwedge_{u=n_2+1}^{n_1+n_2} 0 \le Y_u \le t-1 \right) \wedge \left(\bigwedge_{u \in U(H)} \left( H(u)_1 \le Y_{u}(t) \le t+H(u)_2 \wedge Y_{u}(t) \in \mathbb{Z} \right)\right). \end{displaymath}

Then $S(H)(t)$ contains exactly one point of $V(\alpha,t, \mathbf{y_2})$ if $\mathbb{P}(\alpha,t, \mathbf{y_2}, H)$ and zero points otherwise. \end{proposition}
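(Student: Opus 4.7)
The plan is to prove Proposition \ref{J3} by combining the uniqueness statement in Proposition \ref{J2} with a direct unwinding of the defining conditions for $S(H)(t)$ and $V(\alpha,t,\mathbf{y_2})$. The proof will essentially be a bookkeeping exercise, since the genuine content about the rational solution was already extracted in Proposition \ref{J2}.

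First I would list the conditions that a vector $\mathbf{y_3}$ must satisfy to lie in $S(H)(t) \cap V(\alpha,t,\mathbf{y_2})$: namely, (i) $\mathbf{y_3} \in \mathbb{Z}^{n_1}$ with $0 \le y_u \le t-1$ for every $u$; (ii) $A(\mathbf{y_2} \oplus \mathbf{y_3}) = \mathbf{b}(\alpha)(t)$, from membership in $K(\alpha,t)$; (iii) $y_u = I(H(u))(t)$ for every $u \in U'(H)$, from the first line of (\ref{E2}); and (iv) $H(u)_1 \le y_u \le t + H(u)_2$ for every $u \in U(H)$, from the second line of (\ref{E2}). Since $\mathbf{y_2} \in J(\alpha,t)$, Proposition \ref{J2} tells me that (ii) together with (iii) force the real vector $\mathbf{y_3}$ to equal $\mathbf{Y_3}(\alpha,H)(\mathbf{y_2},t)$, so the intersection $S(H)(t) \cap V(\alpha,t,\mathbf{y_2})$ contains at most one point, and that point, if it exists, is $\mathbf{Y_3}(\alpha,H)(\mathbf{y_2},t)$.

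Given this uniqueness, I would check that $\mathbb{P}(\alpha,t,\mathbf{y_2},H)$ is exactly the condition for $\mathbf{Y_3}(\alpha,H)(\mathbf{y_2},t)$ to actually sit in the intersection. For the forward direction, assume $\mathbb{P}$ holds and set $\mathbf{y_3} := \mathbf{Y_3}(\alpha,H)(\mathbf{y_2},t)$: condition (ii) follows from Proposition \ref{J2}; condition (iii) is immediate from the construction of $\mathbf{Y_3}(\alpha,H)$, whose $u$-coordinate for $u \in U'(H)$ is by design the integer $I(H(u))(t)$; the integrality clauses of $\mathbb{P}$ (nontrivial only for $u \in U(H)$) promote $\mathbf{y_3}$ to $\mathbb{Z}^{n_1}$; and the two families of inequalities in $\mathbb{P}$ give (i) and (iv). Hence $\mathbf{y_3} \in S(H)(t) \cap V(\alpha,t,\mathbf{y_2})$.

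For the converse, if $\mathbb{P}(\alpha,t,\mathbf{y_2},H)$ fails then the only candidate $\mathbf{Y_3}(\alpha,H)(\mathbf{y_2},t)$ violates one of the box inequalities $0 \le Y_u \le t-1$, the $U(H)$-range $H(u)_1 \le Y_u \le t + H(u)_2$, or the integrality condition $Y_u(t) \in \mathbb{Z}$ in some coordinate; in each case $\mathbf{Y_3}(\alpha,H)(\mathbf{y_2},t)$ fails to lie in $S(H)(t) \cap V(\alpha,t,\mathbf{y_2})$, and by the uniqueness observation above no other vector can lie in the intersection, so it is empty. I do not expect a real obstacle here: the substantive step is the uniqueness in Proposition \ref{J2}, and the present proposition just repackages that uniqueness as a finite list of parametric inequalities plus integrality conditions, which is the form needed for the PILP-style manipulations in the remaining sections.
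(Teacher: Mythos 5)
Your proposal is correct and follows essentially the same route as the paper's own proof: Proposition \ref{J2} supplies the uniqueness of the real candidate $\mathbf{Y_3}(\alpha,H)(\mathbf{y_2},t)$, the coordinates in $U'(H)$ are automatically integral since they equal $I(H(u))(t)$, and $\mathbb{P}(\alpha,t,\mathbf{y_2},H)$ is then precisely the condition that this unique candidate lies in $S(H)(t)$. The paper states this more tersely but identically in substance.
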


\begin{proof}
By definition, for all $\mathbf{y_3}$ in $S(H)(t)$ and $u$ in $U'(H),$ $y_u = I(H(u))(t).$ Proposition \ref{J2} tells us the unique real vector $\mathbf{y_3}$ with these coordinates which satisfies $A(\mathbf{y_2} \oplus \mathbf{y_3}) = \mathbf{b(\alpha)}(t),$ so if this vector lies in $S(H)(t)$ (which is a subset of $\{0, \ldots, t-1\}^{n_1}$), then $S(H)(t)$ contains exactly one point of $V(\alpha,t, \mathbf{y_2})$ and zero otherwise. We automatically have, for $u$ in $U'(H),$ that $y_u \in \mathbb{Z}.$ Therefore, $\mathbb{P}(\alpha,t, \mathbf{y_2}, H)$ is equivalent to the proposition that $\mathbf{Y_3}(\alpha, H)(\mathbf{y_2}, t)$ is in $S(H)(t).$
\end{proof}

This proposition is not a finite logical combination of parametric inequalities because the clauses $Y_u(t) \in \mathbb{Z}$ are not parametric inequalities in general. Fortunately, we can form an auxiliary PILP in which an equivalent proposition \textit{is} a finite logical combination of parametric inequalities. This auxiliary PILP will have many more indeterminates. These indeterminates act as remainders. When a remainder is $0,$ a certain expression is an integer.

For convenience, we find a common denominator. Each coordinate of the vector $\mathbf{Y_3}(\alpha, H)(\mathbf{y_2}, t)$ can be written as a $\mathbb{Q}[t]$-covector times $\mathbf{y_2}$ plus an element of $\mathbb{Q}[t]$. Therefore, the whole vector has a common denominator. Since there are finitely many $\alpha$ and $H,$ there exists a common denominator to all vectors $\mathbf{Y_3}(\alpha, H)(\mathbf{y_2}, t)$ say $T,$ which we take to be positive. 

\subsection{Step 4}
In this subection, we define a PILP $Q^*$ which is not in form shown in Theorems \ref{Chen} or \ref{Shen}. Instead, its feasible set is defined by a disjunction of conjunctions of parametric inequalities rather than a conjunction. 

We refer to the data described in Theorem \ref{main3}. (Recall that this section is the bulk of the argument to prove this theorem.)  The indeterminates are $\mathbf{y_2}=(y_1, \ldots, y_{n_2})$ and $Y_{\alpha, H, u, i}$ for all $\alpha$ in $\mathfrak{S},$ $H$ of maximal depth, $u$ in $U(H),$ and $i=1$ or $2$ (so there are finitely many indeterminates). The indeterminates are constrained to be integers, as usual. Let $\mathbf{y}$ be a vector of all of the indeterminates; the order is not important. The objective function is $\mathbf{c}^\intercal(t) \mathbf{y_2},$ which can be written as a polynomial covector times $\mathbf{y}.$ 

Some of the constraints of $Q^*$ are $A_2(t) \mathbf{y_2} \le \mathbf{b_2}(t)$ (which are the constraints of $L_2(t)$) and  $\\ 0 \le Y_{\alpha, H, u, 2} \le T-1.$ For all $\alpha$ in $\mathfrak{S},$ $H$ of maximal depth, and $u$ in $U(H),$ we impose the constraint
\begin{displaymath} T \mathbf{Y_3}(\alpha, H)(\mathbf{y_2}, t)_u = Y_{\alpha, H, u, 1} T + Y_{\alpha, H, u, 2}, \end{displaymath} where $T$ is the common denominator from the end of the previous subsection. These can be written as parametric inequalities. Note that by assumption, these constraints imply $0 \le y_1, \ldots, y_{n_2} \le t-1.$ 

Let its lattice point set be $L^*(t).$ It is easy to see that for all $t,$ no two points in $L^*(t)$ have the same value of $\mathbf{y_2}.$ 

Define $\mathbb{P}(\alpha, t, \mathbf{y_2})$ as in Proposition \ref{J1}; it is a finite logical combination of parametric inequalities in $\mathbf{y_2}.$ Define $\mathbb{P}'(\alpha, t, \mathbf{y_2}, H)$ (similarly to $\mathbb{P}(\alpha, t, \mathbf{y_2}, H)$ in Proposition \ref{J3}) to be
\begin{displaymath} \left(\bigwedge_{u=n_2+1}^{n_1+n_2} 0 \le Y_u \le t-1 \right) \wedge \left(\bigwedge_{i=1}^{q-q_1} \left( H(v_i)_1 \le Y_{v_i}(t) \le t+H(v_i)_2 \wedge Y_{\alpha, H, u, 2}=0 \right)\right), \end{displaymath}
where $\mathbf{Y_3}(\alpha, H)(\mathbf{y_2}, t)=(Y_{n_2+1}, \ldots, Y_{n_1+n_2}).$ Let $\mathcal{H}$ be the set of all $H$ such that $S(H)$ has maximal depth. The last constraint of $Q^*$ is \begin{align} \label{E4}
\bigvee_{W_1 \subset \mathfrak{S}} \left( \left( \bigwedge_{\alpha \in \mathfrak{S} \setminus W_1} \neg \mathbb{P}(\alpha, t, \mathbf{y_2}) \right) \wedge \left( \bigvee_{\substack{W_2 \subset W_1 \times \mathcal{H} \\ |W_1||\mathcal{H}|-|W_2| < m}} \bigwedge_{(\alpha, H) \in W_2} \neg \mathbb{P}'(\alpha, t, \mathbf{y_2}, H) \right) \right). \end{align}

Observe that this last constraint is a finite combination of parametric inequalities in $\mathbf{y}$. Let $Q^*$ have optimum value functions $\{f^*_\ell\}$ and size function $g^*.$ 

The following lemma is an quick check that we need to apply Theorems \ref{Chen} and \ref{Shen}.

\begin{lemma} Let $R^*(t)$ be the real vector set. Then for all $t,$ $R^*(t)$ is bounded. \end{lemma}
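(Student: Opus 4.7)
\begin{proof}
Fix a positive integer $t$; we show each coordinate of any point in $R^*(t)$ lies in a bounded interval.

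First, the constraint $A_2(t)\mathbf{y_2} \le \mathbf{b_2}(t)$ forces $\mathbf{y_2} \in R_2(t)$. By hypothesis $R_2(t) \subseteq [0, t-1]^{n_2}$, so $0 \le y_i \le t-1$ for $i = 1, \ldots, n_2$. Thus $\mathbf{y_2}$ is bounded (by a bound depending only on $t$).

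Next, for each $\alpha \in \mathfrak{S}$, each $H \in \mathcal{H}$, and each $u \in U(H)$, the constraint $0 \le Y_{\alpha,H,u,2} \le T-1$ directly bounds the second auxiliary coordinate, and this bound is independent of $t$.

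Finally, for $Y_{\alpha,H,u,1}$, we use the equality constraint
\[
T\,\mathbf{Y_3}(\alpha, H)(\mathbf{y_2}, t)_u \;=\; Y_{\alpha, H, u, 1}\, T + Y_{\alpha, H, u, 2},
\]
which gives $Y_{\alpha,H,u,1} = \mathbf{Y_3}(\alpha, H)(\mathbf{y_2}, t)_u - Y_{\alpha, H, u, 2}/T$. Each coordinate of $\mathbf{Y_3}(\alpha, H)$ was constructed as a rational polynomial in $\mathbf{y_2}$ and $t$ (with denominator dividing $T$), so for our fixed $t$ it is a polynomial function of $\mathbf{y_2}$. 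Since $\mathbf{y_2}$ ranges over the bounded box $[0, t-1]^{n_2}$, the value $\mathbf{Y_3}(\alpha, H)(\mathbf{y_2}, t)_u$ lies in a bounded interval depending on $t$. Combined with the bound $|Y_{\alpha,H,u,2}/T| \le 1$, this gives a bound on $Y_{\alpha,H,u,1}$.

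Since each of the finitely many coordinates is bounded for fixed $t$, $R^*(t)$ is bounded for all $t$, as desired.
\end{proof}

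The main idea is that once $\mathbf{y_2}$ is pinned into the box $[0,t-1]^{n_2}$ by the $R_2(t)$ constraints, the auxiliary quotient/remainder variables $Y_{\alpha,H,u,1}, Y_{\alpha,H,u,2}$ are automatically controlled: the remainder variables are bounded in a constant-size interval, and the quotient variables are determined up to an error of at most $1$ by the polynomial expression $\mathbf{Y_3}(\alpha,H)(\mathbf{y_2},t)_u$, which is bounded because $\mathbf{y_2}$ is. The only step requiring any thought is checking that the remaining disjunctive constraint (\ref{E4}) does not interfere with the argument — but it does not, since we only need an upper set of inequalities to confine $R^*(t)$, and (\ref{E4}) can only make the set smaller. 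I do not expect any real obstacle in this lemma; it is a straightforward verification made possible by the fact that the remainder variables $Y_{\alpha,H,u,2}$ were introduced precisely with a bounded range $[0, T-1]$.
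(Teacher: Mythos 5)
Your proof is correct and follows essentially the same route as the paper: fix $t$, bound $\mathbf{y_2}$ using the boundedness of $R_2(t)$, note that the $Y_{\alpha,H,u,2}$ are explicitly confined to $[0,T-1]$, and then bound $Y_{\alpha,H,u,1}$ via the equality constraint together with the fact that $\mathbf{Y_3}(\alpha,H)(\mathbf{y_2},t)_u$ is an affine (polynomial) function of the already-bounded $\mathbf{y_2}$ for fixed $t$. No gaps; your remark that the disjunctive constraint (\ref{E4}) can only shrink the set is also consistent with the paper's treatment.
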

\begin{proof} Fix $t.$ Then $R_2(t)$ is bounded, so $R^*(t)$ is bounded in the coordinates $\mathbf{y_2}=(y_1, \ldots, y_{n_2}),$ say by $N$ in each coordinate. $R^*(t)$ is bounded in the coordinates $Y_{\alpha, H, u, 2}.$ For $\mathbf{y}$ in $R^*(t),$ $\\ |Y_{\alpha, H, u, 1}| \le | Y(\alpha, \mathbf{y_2}, H)_u(t)|+1.$ We have $|y_i| < N,$ and $Y(\alpha, \mathbf{y_2}, H)_u(t)$ equals a real covector times $\mathbf{y_2},$ so $R^*(t)$ is bounded in the coordinates $Y_{\alpha, H, u, 1},$ as desired.
\end{proof} 

\begin{proposition} \label{Q} 
Fix $t.$ A vector $\mathbf{y_2}$ is the projection of exactly one element of $L^*(t)$ if it lies in $L_3(t)$, and $\mathbf{y_2}$ is the projection of zero elements of $L^*(t)$ otherwise. \end{proposition}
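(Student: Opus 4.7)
The plan is first to note that the projection $\pi : L^*(t) \to \mathbb{Z}^{n_2}$ sending $\mathbf{y}$ to $\mathbf{y_2}$ is injective, which reduces the claim to characterising the image of $\pi$, and then to unwind the disjunction in (\ref{E4}) to see that this image is exactly $L_3(t)$. For injectivity, the constraints $T\,\mathbf{Y_3}(\alpha,H)(\mathbf{y_2},t)_u = Y_{\alpha,H,u,1}T + Y_{\alpha,H,u,2}$ with $0\le Y_{\alpha,H,u,2}\le T-1$ and integrality of the $Y_{\alpha,H,u,i}$ pin down both auxiliary coordinates as the quotient and remainder when the (integer) left-hand side is divided by $T$; such integer $Y_{\alpha,H,u,i}$ always exist because $T$ was chosen as a common denominator of the $\mathbf{Y_3}(\alpha,H)(\mathbf{y_2},t)$. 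Thus $\mathbf{y_2}$ lies in the image of $\pi$ iff $\mathbf{y_2}\in L_2(t)$ and (\ref{E4}) is satisfied by $\mathbf{y_2}$.

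To analyse (\ref{E4}), set $W_1^* := \{\alpha \in \mathfrak{S} : \mathbb{P}(\alpha,t,\mathbf{y_2})\}$, which by Proposition \ref{J1} coincides with $\{\alpha : \mathbf{y_2}\in J(\alpha,t)\}$. The outer conjunction $\bigwedge_{\alpha\in\mathfrak{S}\setminus W_1}\neg\mathbb{P}(\alpha,t,\mathbf{y_2})$ holds exactly when $W_1\supseteq W_1^*$, and since one may always take the largest eligible $W_2$, the inner disjunction over $W_2$ is equivalent to
\[
\#\{(\alpha,H) \in W_1\times\mathcal{H} : \mathbb{P}'(\alpha,t,\mathbf{y_2},H)\} < m.
\]
This count is monotone nondecreasing in $W_1$, so the outer disjunction over $W_1\supseteq W_1^*$ collapses to the single condition
\[
(\ast)\qquad \#\{(\alpha,H) \in W_1^*\times\mathcal{H} : \mathbb{P}'(\alpha,t,\mathbf{y_2},H)\} < m.
\]
Under the quotient--remainder identification from the first paragraph, $\mathbb{P}'(\alpha,t,\mathbf{y_2},H)$ is equivalent to the proposition $\mathbb{P}(\alpha,t,\mathbf{y_2},H)$ of Proposition \ref{J3}: the clause $Y_{\alpha,H,v,2}=0$ is exactly $\mathbf{Y_3}(\alpha,H)(\mathbf{y_2},t)_v\in\mathbb{Z}$, and coordinates indexed by $U'(H)$ are automatically integers.

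To finish, for $\alpha\in W_1^*$ let $M_\alpha$ denote the number of elements of $V(\alpha,t,\mathbf{y_2})$ lying in some subset of maximal depth. Disjointness of the maximal-depth subsets together with Proposition \ref{J3} yields $M_\alpha = \#\{H\in\mathcal{H} : \mathbb{P}'(\alpha,t,\mathbf{y_2},H)\}$. Corollary \ref{c321} then gives $M_\alpha = \#V(\alpha,t,\mathbf{y_2})$ whenever $\#V(\alpha,t,\mathbf{y_2})<m$, while if $\#V(\alpha,t,\mathbf{y_2})\ge m$ then necessarily $M_\alpha \ge m$, since otherwise Proposition \ref{push} would place all of $V(\alpha,t,\mathbf{y_2})$ in maximal-depth subsets, forcing $M_\alpha=\#V(\alpha,t,\mathbf{y_2})\ge m$, a contradiction. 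Consequently $(\ast)$ is equivalent to the simultaneous statements $\#V(\alpha,t,\mathbf{y_2})<m$ for every $\alpha$ and $\sum_{\alpha\in W_1^*}\#V(\alpha,t,\mathbf{y_2})<m$, i.e.\ $\sum_{\alpha\in\mathfrak{S}}\#V(\alpha,t,\mathbf{y_2})<m$ (the terms with $\alpha\notin W_1^*$ vanish since then $V(\alpha,t,\mathbf{y_2})=\emptyset$). By disjointness of the $K(\alpha,t)$ this last inequality is exactly the condition $\mathbf{y_2}\in L_3(t)$.

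The hard part is the careful bookkeeping inside (\ref{E4}): extracting from the nested $W_1,W_2$ disjunctions the clean cardinality statement $(\ast)$ by exploiting monotonicity in $W_1$ and maximality of $W_2$, and then matching $\#\{H:\mathbb{P}'\}$ with the count $M_\alpha$ of elements in maximal-depth subsets so that Proposition \ref{push} and Corollary \ref{c321} can be applied uniformly whether $\#V(\alpha,t,\mathbf{y_2})$ is below or above the threshold $m$. Everything else is a direct consequence of the tools already assembled in Steps 1--3.
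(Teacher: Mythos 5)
Your proof is correct and takes essentially the same route as the paper: identify the injective lift $\mathbf{y_2}\mapsto\mathbf{y}$ via the quotient--remainder constraints, translate $\mathbb{P}'$ to $\mathbb{P}$, and combine Propositions \ref{J1} and \ref{J3} with Corollary \ref{c321} and the disjointness of the $K(\alpha,t)$. The only difference is organizational: you collapse the nested $W_1,W_2$ disjunction to the canonical choice $W_1=W_1^*$ and the maximal $W_2$ up front by a monotonicity observation, yielding the single criterion $(\ast)$, whereas the paper argues the forward and backward implications separately by picking witnessing $W_1,W_2$ as needed.
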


\begin{proof}

Suppose that $\mathbf{y_2}$ is the projection of one element of $L^*(t).$ Then $\mathbf{y_2}$ lies in $L_2(t).$ because the PILP $Q^*$ contains the constraints on $\mathbf{y_2}$ that define $L_2(t)$ (along with other constraints). From (\ref{E4}), there exists a subset $W_1$ of $\mathfrak{S}$ and a subset $W_2$ of $W_1 \times \mathcal{H}$ that excludes less than $m$ elements such that

\begin{displaymath} \left( \bigwedge_{\alpha \in \mathfrak{S} \setminus W_1} \neg \mathbb{P}(\alpha, t, \mathbf{y_2})\right) \wedge \left( \bigwedge_{(\alpha, H) \in W_2} \neg \mathbb{P}'(\alpha, t, \mathbf{y_2}, H)\right). \end{displaymath}

By Proposition \ref{J1}, for all $\alpha$ in $\mathfrak{S} \setminus W_1,$ $\mathbf{y_2}$ is not in $J(\alpha, t),$ and $\# V(\alpha, t, \mathbf{y_2})=0.$  

There exists a unique integer vector $\mathbf{y}$ that projects onto $\mathbf{y_2}$ such that the constraints 
\begin{displaymath} 0 \le Y_{\alpha, H, u, 2} \le T-1 \text{   and   }T  \mathbf{Y_3}(\alpha, H)(\mathbf{y_2}, t)_u = Y_{\alpha, H, u, 1} T + Y_{\alpha, H, u, 2}\end{displaymath}
are satisfied. We have assumed that $\mathbf{y_2}$ is the projection of one element of $L^*(t),$ so $\mathbf{y}$ satisfies the other constraints of $Q^*.$

It's easy to check that the proposition $Y_{\alpha, H, u, 1}=0$, which is implicitly a proposition of $\mathbf{y_2},$ is equivalent to the proposition $Y(\alpha, \mathbf{y_2}, H)_u(t) \in \mathbb{Z}.$ Then $\mathbb{P}'(\alpha, t, \mathbf{y_2}, H)$ is equivalent to $\mathbb{P}(\alpha, t, \mathbf{y_2}, H)$. By Proposition \ref{J3}, $\neg \mathbb{P}(\alpha, t, \mathbf{y_2}, H)$ implies that $S(H)(t)$ contains zero points of $V(\alpha, t, \mathbf{y_2}).$ For all $(\alpha, H)$ in $W_1 \times \mathcal{H}$ except at most $m-1,$ $S(H)(t)$ contains zero points of $V(\alpha, t, \mathbf{y_2}).$

Therefore, for all $\alpha$ in $W_1,$ the number of distinct elements of $V(\alpha, t, \mathbf{y_2})$ that lie in a subset of maximal depth is less than $m.$ By Corollary \ref{c321}, this number equals $\# V(\alpha, t, \mathbf{y_2}).$ The sets $K(\alpha, t)$ for $\alpha \in \mathfrak{S}$ are disjoint, so 
\begin{align*} &\color{white}{=} \color{black}\, \# \{ \mathbf{y_3} \in \{0, \ldots, t-1\} \mid (\mathbf{y_2} \oplus \mathbf{y_3}) \in \sqcup_{\alpha \in \mathfrak{S}} K(\alpha, t)\}\\
& = \sum_{\alpha \in \mathfrak{S}} \# \{ \mathbf{y_3} \in \{0, \ldots, t-1\} \mid (\mathbf{y_2} \oplus \mathbf{y_3}) \in K(\alpha, t)\} \\
&= \sum_{\alpha \in \mathfrak{S}, H \in \mathcal{H}} \# V(\alpha, t, \mathbf{y_2}) \cap S(H)(t) \le m-1, \end{align*}
so $\mathbf{y_2}$ lies in $L_3(t).$

Conversely, suppose that $\mathbf{y_2}$ lies in $L_3(t).$ There is a unique integer vector $\mathbf{y}$ that projects onto $\mathbf{y_2}$ such that the constraints $0 \le Y_{\alpha, H, u, 2} \le T-1$ and 
\begin{displaymath} T \mathbf{Y_3}(\alpha, H)(\mathbf{y_2}, t)_u = Y_{\alpha, H, u, 1} T + Y_{\alpha, H, u, 2}\end{displaymath}
are satisfied. $\mathbf{y_2}$ is the projection of one element of $L^*(t)$ if $\mathbf{y}$ satisfies the other constraints of $Q^*.$ 

The set $L_3(t)$ is a subset of $L_2(t),$ so $\mathbf{y}$ satisfies the constraints $A_2(t) \mathbf{y_2} \le \mathbf{b_2}(t).$ By definition, 

\begin{align*} m &> \# \{ \mathbf{y_3} \in \{0, \ldots, t-1\} \mid (\mathbf{y_2} \oplus \mathbf{y_3}) \in \sqcup_{\alpha \in \mathfrak{S}} K(\alpha, t)\}  \\
&= \sum_{\alpha \in \mathfrak{S}} \# \{ \mathbf{y_3} \in \{0, \ldots, t-1\} \mid (\mathbf{y_2} \oplus \mathbf{y_3}) \in K(\alpha, t)\}  \\
&\ge \sum_{\alpha \in \mathfrak{S}, H \in \mathcal{H}} \# V(\alpha, t, \mathbf{y_2}) \cap S(H)(t). \end{align*}

Let 
\begin{align*} W_1 &:= \{ \alpha \in \mathfrak{S} \mid \mathbb{P}(\alpha, t, \mathbf{y_2})\}, \\ W_2 &:= \{(\alpha, H) \in W_1 \times \mathcal{H} \mid V(\alpha, t, \mathbf{y_2}) \cap S(H)(t) = \varnothing\}. \end{align*}

Since the sets $V(\alpha, t, \mathbf{y_2}) \cap S(H)(t)$ have size at most 1, $|W_1||\mathcal{H}|-|W_2| < m.$ By Proposition \ref{J3}, for $(\alpha, H) \in W_2,$ $\neg \mathbb{P}(\alpha, t, \mathbf{y_2}, H)$ holds. As before, $\mathbb{P}(\alpha, t, \mathbf{y_2}, H)$ is equivalent to $\mathbb{P'}(\alpha, t, \mathbf{y_2}, H)$. Altogether, we have 
\begin{displaymath} \left( \bigwedge_{\alpha \in \mathfrak{S} \setminus W_1} \neg \mathbb{P}(\alpha, t, \mathbf{y_2})\right) \wedge \left( \bigwedge_{(\alpha, H) \in W_2} \neg \mathbb{P}'(\alpha, t, \mathbf{y_2}, H)\right), \end{displaymath}
which implies (\ref{E4}) and that $\mathbf{y}$ lies in $L^*(t).$
\end{proof}

\begin{corollary} For all $t$ and $\ell,$ $g(t)=g^*(t)$ and $f_\ell(t)=f^*_\ell(t).$ \end{corollary}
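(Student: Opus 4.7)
The plan is to deduce this corollary directly from Proposition \ref{Q}, which already does essentially all the work. Fix $t$. Proposition \ref{Q} says that the projection map $\pi: L^*(t) \to \mathbb{Z}^{n_2}$ sending $\mathbf{y} \mapsto \mathbf{y_2}$ has image exactly $L_3(t)$, and that every fiber over a point of $L_3(t)$ has size exactly $1$. Consequently $\pi$ restricts to a bijection $L^*(t) \to L_3(t)$, which immediately gives $g^*(t) = |L^*(t)| = |L_3(t)| = g(t)$.

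Next I would handle the optimum value functions. The objective function of $Q^*$ is $\mathbf{c}^\intercal(t)\mathbf{y_2}$, which depends only on the $\mathbf{y_2}$-coordinates, so it factors through $\pi$: for every $\mathbf{y} \in L^*(t)$, the value of the $Q^*$-objective at $\mathbf{y}$ equals the value of the parametric exclusion objective at $\pi(\mathbf{y}) \in L_3(t)$. Since $\pi$ is a bijection onto $L_3(t)$, evaluating the two objective functions on the respective sets produces identical multisets of real numbers. In particular the $\ell^{\text{th}}$ largest element with multiplicity coincides, yielding $f^*_\ell(t) = f_\ell(t)$ whenever $|L_3(t)| \ge \ell$, and both are $-\infty$ otherwise.

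The only sub-step that needs any care is checking that $\pi$ is well defined and surjective with singleton fibers; but the forward direction of Proposition \ref{Q} says exactly that any $\mathbf{y_2}$ lying below a point of $L^*(t)$ lies in $L_3(t)$, and the converse direction produces the unique preimage, using the auxiliary indeterminates $Y_{\alpha,H,u,1}, Y_{\alpha,H,u,2}$ determined (via the division-with-remainder constraints $T \mathbf{Y_3}(\alpha,H)(\mathbf{y_2},t)_u = Y_{\alpha,H,u,1} T + Y_{\alpha,H,u,2}$ together with $0 \le Y_{\alpha,H,u,2} \le T-1$) by $\mathbf{y_2}$ alone. I expect no real obstacle here; the proof is a two-line bookkeeping argument once Proposition \ref{Q} is in hand.
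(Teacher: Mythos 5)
Your proposal is correct and follows exactly the paper's own argument: invoke Proposition \ref{Q} to see that the projection onto the $\mathbf{y_2}$-coordinates is a bijection from $L^*(t)$ to $L_3(t)$, then observe that the objective function of $Q^*$ depends only on $\mathbf{y_2}$, so the bijection identifies the two multisets of objective values, giving $g^*(t)=g(t)$ and $f^*_\ell(t)=f_\ell(t)$ for all $\ell$. No gaps; this matches the paper's two-line proof.
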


\begin{proof}
By Proposition \ref{Q}, the map $\varphi_t$ that takes a vector $\mathbf{y}$ to $\mathbf{y_2}$ by ignoring the other coordinates is a bijection from $L^*(t)$ to $L_3(t).$ By construction, this bijection commutes with evaluating the respective objective function, so we're done.
\end{proof}

\section{Untangling Disjunctions and Conjunctions}
\label{sec-untangling}

Theorems \ref{Chen} and \ref{Shen} only apply to PILPs given by a finite conjunctions of parametric inequalities; they do not trivially apply to $Q^*.$ In this section, we extend these theorems to PILPs such as $Q^*$. Recall that the full constraints of $Q^*$ are 
\begin{displaymath} A_2(t) \mathbf{y_2} \le \mathbf{b_2}(t) \wedge \left( \bigwedge 0 \le Y_{\alpha, H, u, 2} \le T-1 \right) \wedge \left( \bigwedge T  \mathbf{Y_3}(\alpha, H)(\mathbf{y_2}, t)_u = Y_{\alpha, H, u, 1} T + Y_{\alpha, H, u, 2} \right) \wedge (\ref{E4}). \end{displaymath} 

The negation of a parametric inequality over integer indeterminates is another parametric inequality. Using this fact, De Morgan's laws and the logical distributive laws, one finds that the constraints of $Q^*$ are a finite disjunction of finite conjunctions of parametric inequalities of $\mathbf{y}.$

It remains to extend theorems \ref{Chen} and \ref{Shen} to PILPs given by a finite disjunction of finite conjunctions of parametric inequalities whose real vector set is bounded for all $t.$

\begin{proposition} For a PILP whose lattice point set is given by a finite disjunction of finite conjunctions of parametric inequalities whose real vector set is bounded for all $t,$ all optimum value functions and the size value function are EQP. \end{proposition}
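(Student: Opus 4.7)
The plan is to reduce a finite disjunction of conjunctions to a disjoint union of PILPs of the form already handled by Theorems \ref{Chen} and \ref{Shen}, and then recombine. Suppose the feasible set is specified by $\bigvee_{i=1}^k C_i(t)$ where each $C_i$ is a conjunction of parametric inequalities. For each nonempty subset $I \subseteq \{1, \ldots, k\}$, define $T_I(t) := \bigcap_{i \in I} C_i(t) \cap \bigcap_{j \notin I} \neg C_j(t)$. By the paper's observation that the negation of a parametric inequality over integer indeterminates is a parametric inequality, each $\neg C_j$ is a disjunction of parametric inequalities. Distributing the outer conjunction over these disjunctions (and discarding any $T_I$ that becomes inconsistent), each $T_I(t)$ is a finite disjoint union of sets each defined by a finite conjunction of parametric inequalities. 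Hence the full lattice point set decomposes as $L(t) = \bigsqcup_{\alpha \in \mathfrak{A}} L_\alpha(t)$, a disjoint union of finitely many lattice point sets of PILPs in canonical form, and boundedness of the ambient real vector set is inherited by each $L_\alpha$.

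For the size function, $g(t) = \sum_{\alpha \in \mathfrak{A}} |L_\alpha(t)|$; each summand is EQP by Theorem \ref{Chen}, and a finite sum of EQPs is EQP (pick a common period and add componentwise), so $g$ is EQP. For the $\ell^{\text{th}}$ largest value function $f_\ell$, I would establish the identity
\[
f_\ell(t) \;=\; \max_{\substack{(\ell_\alpha)_{\alpha \in \mathfrak{A}} \\ \sum_\alpha \ell_\alpha = \ell,\; \ell_\alpha \ge 0}} \;\min_{\alpha : \ell_\alpha > 0} f_{\alpha, \ell_\alpha}(t),
\]
where $f_{\alpha, j}(t)$ is the $j^{\text{th}}$ largest value of the objective on $L_\alpha(t)$, with the conventions $f_{\alpha, 0} = +\infty$ and $f_{\alpha, j} = -\infty$ when $j > |L_\alpha(t)|$. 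The $\ge$ direction follows by taking the top $\ell_\alpha$ values from each $L_\alpha$, which provides $\ell$ values in $L(t)$ all at least $\min_\alpha f_{\alpha, \ell_\alpha}(t)$; the $\le$ direction follows by letting $\ell_\alpha$ be the actual count of top-$\ell$ elements of $L(t)$ that land in $L_\alpha(t)$, which realizes $f_\ell(t)$ as the minimum. By Theorem \ref{Shen}, each $f_{\alpha, j}$ is EQP (with the constant function $-\infty$ allowed). The outer max is over a finite set of compositions (independent of $t$), so the right-hand side is a nested $\max/\min$ of finitely many EQPs.

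To finish, I would prove a short lemma: the pointwise $\max$ or $\min$ of finitely many EQPs is EQP. Taking a common period $d$ for all the given EQPs, on each residue class $r \pmod d$ every input reduces to a fixed polynomial in $t$, so the max/min among finitely many polynomials is, for $t \gg 0$ in that class, realized by a single one of them (since any two distinct polynomials have eventually constant sign difference). Hence max/min agrees with a single polynomial on each residue class for $t \gg 0$, giving an EQP. Applying this lemma twice (once for the inner $\min$ for each fixed composition, once for the outer $\max$) shows $f_\ell$ is EQP. The main obstacle is handling the $\pm\infty$ conventions cleanly in the max-min identity, since the admissible compositions depend on $|L_\alpha(t)|$, which varies with $t$; but including $-\infty$ and $+\infty$ as degenerate polynomials absorbs this variation uniformly and lets the lemma close the argument.
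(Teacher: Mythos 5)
Your proof is correct in its essentials and takes a genuinely different route from the paper at two points, so let me compare them. For the disjoint decomposition, you partition by the set $I$ of satisfied clauses (Venn-diagram style), while the paper orders the conjunctive clauses and partitions by the \emph{first} satisfied clause; both achieve the same goal. One small imprecision on your side: after distributing $\bigcap_{j\notin I}\neg C_j$ over the disjunctions $\neg C_j = \bigvee_{a\in C_j}\neg a$, the resulting pieces inside a single $T_I$ are \emph{not} automatically disjoint. You need to first disjointify each $\neg C_j$ (e.g.\ replace $\bigvee_{a\in C_j}\neg a$ by the disjoint $\bigvee_{a\in C_j}(\neg a\wedge\bigwedge_{b<a}b)$ for some ordering of $C_j$) before distributing. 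This is a one-line fix, and both the size-function and max--min arguments genuinely need it, so it is worth stating explicitly.

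For the recombination step, the paper observes directly that the $\ell^{\text{th}}$ largest value over the disjoint union equals the $\ell^{\text{th}}$ largest element of the multiset $\bigcup_\alpha\{f_{\alpha,1}(t),\ldots,f_{\alpha,\ell}(t)\}$ (with $-\infty$ padding), and then cites Proposition 3.4 of \cite{Shen}, which says the $\ell^{\text{th}}$ largest of finitely many EQPs is EQP. You instead derive the max--min identity over compositions of $\ell$, and prove from scratch that the pointwise max or min of finitely many EQPs is EQP. Both work; the paper's identity is simpler (no compositions, no inner min) and its cited lemma strictly subsumes your max/min lemma, while your route is more self-contained and your max--min identity, with its careful handling of $\pm\infty$ conventions, is verified correctly. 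Either approach would be acceptable; the paper's is a bit shorter because it delegates the order-statistics lemma to the reference.
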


\begin{proof} First, we rewrite a finite disjunction of finite conjunctions of parametric inequalities as a finite ``disjoint disjunction" of finite conjunctions of parametric inequalities; that is, the component conjunctional clauses define parametric regions which are disjoint for any fixed $t.$ We generalize the following identity: 
\begin{displaymath} (A \wedge B) \vee (C \wedge D) = (A \wedge B) \oplus (A \wedge \neg B \wedge C \wedge D) \oplus (\neg A \wedge C \wedge D). \end{displaymath}

Here, $\oplus$ denotes disjunction and simultaneously states that no two of the following propositions can be true at the same time. This is \textit{not} exclusive disjunction; it is analogous to a disjoint union of sets. Let $T$ be a finite set of finite sets of propositions and suppose that all sets are totally ordered by $<.$ 

\begin{align*} \bigvee_{S \in T}  \bigwedge_{a \in S} a  &= \bigoplus_{S \in T}  \left( \left(\bigwedge_{a \in S} a \right) \wedge \bigwedge_{R \in T, R < S} \neg \left( \bigwedge_{a \in R} a \right)\right)  \\
&= \bigoplus_{S \in T}  \left( \left(\bigwedge_{a \in S} a \right) \wedge \bigwedge_{R \in T, R < S}\bigoplus_{a \in R} \left( a \wedge \bigwedge_{b \in R, b < a} \neg b \right) \right) \\
 &= \bigoplus_{S \in T}  \left( \left(\bigwedge_{a \in S} a \right) \wedge \bigoplus_{(a_R \in R) \forall R \in T, R < S} \bigwedge_{R \in T, R < S}\left(a_R \wedge \bigwedge_{b \in R, b < a_R} \neg b \right) \right)\\ 
&= \bigoplus_{\substack{S \in T\\(a_R \in R) \forall R \in T, R < S}}  \left( \left(\bigwedge_{a \in S} a \right) \wedge \bigwedge_{R \in T, R < S}\left(a_R \wedge \bigwedge_{b \in R, b < a_R} \neg b \right) \right). \end{align*}

Observe that the last line is a finite disjoint disjunction of finite conjunctions of propositions. Now consider a PILP whose constraints are equivalent to a finite disjunction of finite conjunctions of parametric inequalities. Let $T$ be the set that represents these constraints. By the above equations, the lattice point set (or real vector set) equals the disjoint union of the lattice point sets (or real vector sets) for the PILPs determined by the component conjunctions. If the real vector set defined by $\vee_{S \in T}  \wedge_{a \in S} a$ is bounded for all $t$, the same must be true for the component conjunctional clauses in the last line. Therefore, Theorems \ref{Chen} and \ref{Shen} apply directly to each component. 

Now, we extend Theorems \ref{Chen} and \ref{Shen} to finite disjoint disjunctions. It is easy to see that Theorem \ref{Chen} extends to finite disjoint disjunctions because the size functions add, and a finite sum of EQPs is EQP. 

To extend Theorem \ref{Shen}, observe that for all $t$ and $\ell,$ the $\ell^{\text{th}}$ largest value of the objective function in the disjoint union equals the $\ell^{\text{th}}$ largest value in the union of the multisets of the $\ell$ largest values from each component (at $t$). If a component's lattice point set has size less than $\ell,$ our convention is that the multiset of the $\ell$ largest values contains $-\infty$ at least once. There are finitely many components. By Proposition 3.4 of \cite{Shen}, which is copied below, the $\ell^{\text{th}}$ optimum value function is EQP.

\begin{proposition}[{Proposition 3.4 in \cite{Shen}}] Let $m$ and $\ell$ be positive integers and $f_{1}, \ldots,  f_m$ be eventual quasi-polynomials with codomain $\{-\infty\} \cup \mathbb{Z}$. For all $t,$ let $f(t)$ be the $\ell^{\text{th}}$ largest value among the multiset $\{f_1(t), \ldots, f_m(t)\}.$ Then $f$ is eventually quasi-polynomial.
\end{proposition}

\end{proof}

This proposition applies directly to $Q^*$ and completes the proof of Theorem \ref{main3}.

\section{Proof of Theorem \ref{main}}
\label{sec:main}
Roune and Woods showed that, to prove their conjecture, it sufficed to consider certain $n$-tuples of polynomials. We present the same results for our more general theorem.

\begin{proposition} \label{red1} To prove Theorem \ref{main}, it suffices to consider polynomials $Q_1, \ldots, Q_n$ that map $\mathbb{Z}$ to $\mathbb{Z}$ and have positive leading coefficients such that for $t \gg 0,$ $\gcd(Q_1(t), \ldots, Q_n(t))=1.$ \end{proposition}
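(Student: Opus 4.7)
The plan is to use the scaling identities $F_{m,\ell}(cP_1(t),\ldots,cP_n(t)) = c\,F_{m,\ell}(P_1(t),\ldots,P_n(t))$ and $G_m(cP_1(t),\ldots,cP_n(t)) = G_m(P_1(t),\ldots,P_n(t))$ to factor out the common divisor $c(t) := \gcd(P_1(t),\ldots,P_n(t))$, reducing to the coprime case on each residue class modulo a suitable period $d$. The bulk of the argument is showing that on each such residue class the factorization is realized by genuine integer-valued polynomials.

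First I would analyze $c(t)$. Let $D := \gcd_{\mathbb{Q}[t]}(P_1,\ldots,P_n)$, scaled so that $D \in \mathbb{Z}[t]$ has positive leading coefficient, and write $P_i = D \tilde P_i$ with the $\tilde P_i \in \mathbb{Q}[t]$ coprime over $\mathbb{Q}[t]$. Choose $e \in \mathbb{Z}_{>0}$ clearing the denominators of the $\tilde P_i$ and set $\tilde Q_i := e \tilde P_i \in \mathbb{Z}[t]$. Bezout in $\mathbb{Q}[t]$, followed by clearing denominators, gives $R_i \in \mathbb{Z}[t]$ and $M \in \mathbb{Z}_{>0}$ with $\sum_i R_i \tilde Q_i = M$, so $h(t) := \gcd(\tilde Q_1(t),\ldots,\tilde Q_n(t))$ divides $M$ for every integer $t$. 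Since divisibility of $\tilde Q_i(t)$ by any divisor $k \mid M$ depends only on $t \bmod k$, $h$ is purely periodic with period dividing $M$. Combining $eP_i = D \tilde Q_i$ yields $c(t) = D(t)\,h(t)/e$, so on each residue class $r$ modulo a period $d$ of $h$, the function $c(t)$ agrees for $t \gg 0$ with the rational polynomial $G_r(t) := D(t)\,h_r/e$, where $h_r$ is the constant value of $h$ on that class; in particular $G_r$ is integer-valued on the residue class.

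Second I would substitute $t = ds + r$ and set $Q_{i,r}(s) := P_i(ds+r)/G_r(ds+r) = \tilde Q_i(ds+r)/h_r$. The rightmost form shows $Q_{i,r} \in \mathbb{Q}[s]$, and since $Q_{i,r}(s) = P_i(ds+r)/c(ds+r) \in \mathbb{Z}$ for all large integer $s$, $Q_{i,r}$ is an integer-valued polynomial with positive leading coefficient; moreover $\gcd(Q_{1,r}(s),\ldots,Q_{n,r}(s)) = h(ds+r)/h_r = 1$ for $s \gg 0$. The assumed reduced form of Theorem \ref{main} then gives EQP behavior of $s \mapsto F_{m,\ell}(Q_{1,r}(s),\ldots,Q_{n,r}(s))$ and $s \mapsto G_m(Q_{1,r}(s),\ldots,Q_{n,r}(s))$, and the scaling identities convert this to EQP behavior of $s \mapsto F_{m,\ell}(P_1(ds+r),\ldots,P_n(ds+r)) = G_r(ds+r)\,F_{m,\ell}(Q_{1,r}(s),\ldots,Q_{n,r}(s))$ and analogously for $G_m$ (without the $G_r$ factor). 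Since a function that is EQP on each residue class of a fixed modulus is itself EQP, the full Theorem \ref{main} follows.

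The main obstacle is the bookkeeping in the first step: producing a quasi-polynomial decomposition of $c(t)$ whose components $G_r$ are rational polynomials integer-valued on their residue class, and verifying after substitution that the quotients $Q_{i,r}$ really are polynomials rather than rational functions that merely take integer values on a sparse set of arguments. Once the interplay between $D \in \mathbb{Z}[t]$, the clearing denominator $e$, and the periodic gcd $h$ is set up cleanly, the remaining assembly is routine.
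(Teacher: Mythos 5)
Your argument is correct, and its skeleton is the same as the paper's: restrict to residue classes modulo a common period, use the scaling identities $F_{m,\ell}(ca_1,\ldots,ca_n)=cF_{m,\ell}(a_1,\ldots,a_n)$ and $G_m(ca_1,\ldots,ca_n)=G_m(a_1,\ldots,a_n)$ to pull out the gcd, apply the reduced case of Theorem \ref{main} to the quotient polynomials, and glue the residue classes back together. The one real difference is in how the quasi-polynomial structure of $\gcd(P_1(t),\ldots,P_n(t))$ and of the quotients $P_i(t)/\gcd$ is obtained: the paper simply cites Lemmas 3.3 and 3.4 of Roune and Woods \cite{RW} for the facts that the gcd and the quotients are EQP, whereas you re-derive this from scratch via the factorization $c(t)=D(t)h(t)/e$, with $D$ the $\mathbb{Q}[t]$-gcd and $h$ shown to be periodic (period dividing the Bezout constant $M$). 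This makes your proof self-contained, yields an explicit period bound, and exhibits the quotients on each class directly as the polynomials $\tilde Q_i(ds+r)/h_r$ rather than as EQP functions that ``eventually agree with a polynomial''; the paper's version is shorter at the cost of the external citation. Your checks at the delicate points (that $Q_{i,r}$ is a genuine polynomial, integer-valued because it is integer-valued at all large integers, with gcd identically $1$ on the class) are the right ones, so there is no gap.
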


\begin{proof} Let $P_1, \ldots, P_n$ be as in Theorem 1.5. Lemma 3.4 of \cite{RW} by Roune and Woods shows that $h(t):=\gcd(P_1(t), \ldots, P_n(t))$ is EQP. Each $P_i$ is eventually positive, so in fact $h$ is eventually positive. By Lemma 3.3 of their paper, for $i=1, \ldots, n,$ $\frac{P_i(t)}{h(t)}$ is EQP. It is also integer valued and positive. Suppose that $d$ is a common period of $h$ and all polynomials $P_i/h$, and let $a$ be an integer. For $i=1, \ldots, n,$ let $P_{i, a}(s):=\frac{P_i(a+sd)}{h(a+sd)}.$ 

For all $i,$ $d$ is a period of $P_i/h,$ so $P_{i, a}$ eventually agrees with a polynomial. Furthermore, $P_{i, a}$ is eventually positive and maps $\mathbb{Z}$ to $\mathbb{Z},$ and for $s \gg 0,$ $\gcd(P_{1, a}(s), \ldots, P_{n, a}(s))=1.$

Suppose that Theorem \ref{main} is true for polynomials $Q_1, \ldots, Q_n$ such that for sufficiently large $t,$ $\gcd(Q_1(t),  \ldots, Q_n(t))=1.$ Since we only care about large $s,$ this theorem applies to $P_{1, a}, \ldots, P_{n, a}$, so $F_{m,\ell}(P_{1,a}(s), \ldots, P_{n, a}(s))$ as a function of $s$ is EQP. When $t \equiv a \pmod{d},$ $\frac{t-a}{d}$ is an integer, and

\begin{align*} F_{m,\ell}(P_1(t), \ldots, P_n(t))&=h(t)F_{m, \ell}\left(\frac{P_1(t)}{h(t)},  \ldots, \frac{P_n(t)}{h(t)}\right)\\
&= h(t) F_{m, \ell} \left( P_{1, a}\left(\frac{t-a}{d}\right), \ldots, P_{n, a} \left( \frac{t-a}{d} \right) \right). \end{align*}

Since $h$ and $F_{m, \ell}(P_{1, a}(s), \ldots, P_{n, a}(s))$ are EQP, $F_{m, \ell}(P_1(t), \ldots, P_n(t))$ with $t$ restricted to $a \pmod{d}$ is EQP. It easily follows that $F_{m, \ell}(P_1(t), \ldots, P_n(t))$ is EQP. A similar argument proves the proposition for $G_m.$

\end{proof}
\begin{proposition} \label{red2} To prove Theorem \ref{main}, it suffices to consider $P_1, \ldots, P_n$ in $\mathbb{Z}[u]$ such that for $t \gg 0,$ $\gcd(P_1(t), \ldots, P_n(t))=1.$
\end{proposition}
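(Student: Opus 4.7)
The plan is to follow the arithmetic-progression trick used in the proof of Proposition \ref{red1}, now with the period $d$ chosen to clear denominators of the coefficients rather than to be a common period of some EQP. By Proposition \ref{red1} I may already assume that $Q_1, \ldots, Q_n \in \mathbb{Q}[u]$ map $\mathbb{Z}$ to $\mathbb{Z}$, have positive leading coefficient, and satisfy $\gcd(Q_1(t), \ldots, Q_n(t)) = 1$ for $t \gg 0$; the remaining job is to pass from integer-valued to integer-coefficient polynomials.

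First I would choose a positive integer $d$ such that $d$ times every coefficient of every $Q_i$ is an integer. For each residue $a \in \{0, 1, \ldots, d - 1\}$, define $P_{i, a}(s) := Q_i(a + d s)$. The key claim is that $P_{i, a} \in \mathbb{Z}[s]$: writing $Q_i(u) = \sum_k c_k u^k$ and expanding $(a + d s)^k$ binomially, the coefficient of $s^j$ in $P_{i, a}$ equals $\sum_k c_k \binom{k}{j} a^{k - j} d^j$, which for $j = 0$ is $Q_i(a) \in \mathbb{Z}$ and for $j \ge 1$ equals $d^{j - 1} \sum_k (d c_k) \binom{k}{j} a^{k - j}$, an integer because each $d c_k$ is. The leading coefficient of $P_{i, a}(s)$ is $d^{\deg Q_i}$ times the positive leading coefficient of $Q_i$, so $P_{i, a}$ has positive leading coefficient, and $\gcd(P_{1, a}(s), \ldots, P_{n, a}(s))$ equals $\gcd(Q_1(a + d s), \ldots, Q_n(a + d s)) = 1$ for $s \gg 0$.

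Next, assuming Theorem \ref{main} in the stated restricted form, I would apply it to each tuple $(P_{1, a}, \ldots, P_{n, a})$, yielding that $s \mapsto F_{m, \ell}(P_{1, a}(s), \ldots, P_{n, a}(s))$ and $s \mapsto G_m(P_{1, a}(s), \ldots, P_{n, a}(s))$ are EQP. For $t \equiv a \pmod d$ with $t \gg 0$, set $s = (t - a)/d \in \mathbb{Z}_{\ge 0}$; then $(Q_1(t), \ldots, Q_n(t)) = (P_{1, a}(s), \ldots, P_{n, a}(s))$ as tuples, so the restriction of $t \mapsto F_{m, \ell}(Q_1(t), \ldots, Q_n(t))$ to the progression $t \equiv a \pmod d$ is an EQP function of $s$ precomposed with the integer-slope affine change of variable $t \mapsto (t - a)/d$. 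Precomposition with such a map turns an EQP into an EQP, so each residue restriction is EQP, and reassembling over the $d$ residue classes (with period, say, the lcm of the individual periods multiplied by $d$) shows that the full function $t \mapsto F_{m, \ell}(Q_1(t), \ldots, Q_n(t))$ is EQP; the argument for $G_m$ is identical.

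I do not expect any serious obstacle: the reduction parallels Proposition \ref{red1} almost verbatim, with ``common period of an EQP $\gcd$'' replaced by ``common denominator of coefficients.'' The only step requiring any real attention is the integrality claim $P_{i, a} \in \mathbb{Z}[s]$, handled by the short binomial computation above, which crucially uses the choice of $d$ as a common denominator of the coefficients of the $Q_i$.
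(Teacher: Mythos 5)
Your proof is correct and is essentially the same reduction the paper has in mind: the paper simply cites Lemma 3.1 and Remark 3.2 of Roune and Woods, which carry out exactly this substitution $t = a + ds$ with $d$ clearing the denominators of the integer-valued polynomials, followed by reassembly over residue classes. You have merely written out self-contained what the paper outsources to the reference, mirroring the argument already used for Proposition \ref{red1}.
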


\begin{proof}Given Proposition \ref{red1}, this proposition is merely Lemma 3.1 and Remark 3.2 of Roune and Woods \cite{RW}.\end{proof}

We require a simple bound on the Frobenius number, which corresponds to the second region of a parametric exclusion problem.

\begin{theorem}[Erd\H{o}s and Graham\cite{EG}] \label{EG}
Let $x_1, \ldots, x_n$ be positive integers such that $x_1 < x_2 \cdots < x_n$ and $\gcd(x_1, \ldots, x_n)=1.$ Then $F(x_1, \ldots, x_n) \le 2 x_n \left \lfloor \frac{x_1}{n} \right \rfloor - x_1.$
\end{theorem}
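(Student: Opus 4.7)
The plan is to reduce to bounding the Apéry set of $x_1$ and then apply a pigeonhole/sumset argument.

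First, for each residue class $r$ modulo $x_1$, define $m_r$ to be the smallest non-negative integer of the form $\sum_{i=2}^n b_i x_i$ with $b_i \in \mathbb{Z}_{\geq 0}$ that is congruent to $r$ modulo $x_1$; since $\gcd(x_1, \ldots, x_n) = 1$, such a value exists for every $r$. A standard observation (going back to Brauer and Apéry) gives
\[ F(x_1, \ldots, x_n) = \max_{r \not\equiv 0 \,(\mathrm{mod}\, x_1)} m_r - x_1. \]
Thus it suffices to prove $m_r \leq 2 x_n \lfloor x_1/n \rfloor$ for every residue $r$.

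Next, write $k = \lfloor x_1/n \rfloor$ and consider the set of residues
\[ T := \{ j x_i \bmod x_1 : 2 \leq i \leq n,\ 0 \leq j \leq k \} \subseteq \mathbb{Z}/x_1\mathbb{Z}. \]
I would argue by cases on whether the multiset defining $T$ has a collision. If all the representatives $jx_i$ with $0 \leq j \leq k$ and $2 \leq i \leq n$ are distinct modulo $x_1$, then $|T| \geq (n-1)k + 1$, and a short computation using $k \geq (x_1 - n + 1)/n$ shows that $|T| > x_1/2$, so by the standard sumset pigeonhole ($|T + T| \geq \min(2|T|-1, x_1)$, valid for arbitrary subsets of $\mathbb{Z}/x_1\mathbb{Z}$ by a Cauchy-Davenport-style argument, or more elementarily: if $|T| > x_1/2$, the sets $T$ and $r - T$ must intersect for every $r$), one has $T + T = \mathbb{Z}/x_1\mathbb{Z}$. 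Then any residue $r$ can be written as $j_1 x_{i_1} + j_2 x_{i_2} \pmod{x_1}$ with $0 \leq j_1, j_2 \leq k$, whence $m_r \leq 2k x_n$, as required.

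The main obstacle is the collision case: if $j_1 x_{i_1} \equiv j_2 x_{i_2} \pmod{x_1}$ for some distinct pairs $(j_1, i_1) \neq (j_2, i_2)$ in the range above, then $|T|$ need not exceed $x_1/2$. Here the plan is to exploit the collision to replace representatives and shorten any representation: translating an extremal representation $\sum b_i x_i$ by the dependency reduces the length without leaving the residue class, which contradicts minimality of $m_r$ unless the bound already holds. Formalizing this second case cleanly (either by repeatedly applying a shortest-counterexample argument, or by directly bounding the number of length-$k$ monomials that share residues) is the most delicate step; once it is handled, combining the two cases yields $m_r \leq 2k x_n$ uniformly, hence $F(x_1, \ldots, x_n) \leq 2 x_n \lfloor x_1/n \rfloor - x_1$.
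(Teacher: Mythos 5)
The paper does not prove this statement; it is cited directly from Erd\H{o}s and Graham's paper, and the surrounding text even remarks that the weaker bound $F(x_1,\ldots,x_n) < x_n^2$ is simpler to prove and would suffice for the paper's purposes. So there is no in-paper proof to compare your argument to.

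Taken on its own, your proposal has a genuine gap, which you flag yourself: the entire collision case is left unresolved. In the Ap\'ery/Brauer--Shockley framework you set up, proving $m_r \le 2k x_n$ when the residues $jx_i \bmod x_1$ ($2\le i\le n$, $0\le j\le k$) are all distinct is the easy half; the content of the Erd\H{o}s--Graham theorem is precisely how to handle the degenerate situation where they are not, and ``translating an extremal representation by the dependency'' is a hope, not a proof --- the dependency gives you a relation on residues, not automatically a shorter nonnegative representation in the same class. A second, smaller issue is that the cardinality bound $(n-1)\lfloor x_1/n\rfloor + 1 > x_1/2$ used in the no-collision case fails when $\lfloor x_1/n\rfloor = 0$, i.e., when $x_1 < n$. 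In that regime the theorem as literally stated in the paper is also false (for instance $F(2,3,5)=1$ but the claimed bound is $2\cdot 5\cdot 0 - 2 = -2$), so there is a missing hypothesis of the form $x_1 \ge n$ inherited from the original source; any proof should state and use it. Until the collision case is actually argued, this is a plausible outline rather than a proof.
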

It is simpler to prove the weaker result $F(x_1, \ldots, x_n) < x_n^2.$ The bound that we really need is the following.

\begin{corollary}
Let $n, m, $ and $\ell$ be positive integers with $n \ge 2$ and $P_1, \ldots, P_n$ be in $\mathbb{Z}[u]$ and eventually positive. Then for some integer $r$ all $t\gg 0,$ $\ell+F_{m, \ell}(P_1(t), \ldots, P_n(t)) < t^r.$
\end{corollary}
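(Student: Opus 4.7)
The plan is to combine the Erdős–Graham bound (Theorem \ref{EG}) with the quantitative statement implicit in the proof of Proposition \ref{p1}, using the scaling invariance of $F_{m,\ell}$ to first reduce to the coprime case. Writing $h(t) := \gcd(P_1(t), \ldots, P_n(t))$, Lemma 3.4 of Roune–Woods \cite{RW} shows that $h$ is EQP and eventually positive, and Lemma 3.3 of the same paper shows each $P_i/h$ is an integer valued EQP. In particular, $h(t)$ and every $P_i(t)/h(t)$ is bounded above by a polynomial in $t$ for $t \gg 0$, and the tuple $(P_1(t)/h(t), \ldots, P_n(t)/h(t))$ has GCD $1$ for $t \gg 0$.

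Using the scaling identity
\[ F_{m,\ell}(P_1(t), \ldots, P_n(t)) = h(t) \cdot F_{m,\ell}\!\left(\tfrac{P_1(t)}{h(t)}, \ldots, \tfrac{P_n(t)}{h(t)}\right), \]
the problem reduces to polynomially bounding $F_{m,\ell}$ on a coprime tuple. The proof of Proposition \ref{p1} actually establishes the quantitative fact that, for coprime positive integers $a_1, \ldots, a_n$ with $n \ge 2$, any $k$ with representation count less than $m$ satisfies
\[ k \le (m-1)\, a_1 a_2 + F(a_1, \ldots, a_n), \]
since taking $N = m-1$ in that argument forces $h(k) \ge m$ whenever $k$ exceeds this threshold. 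In particular, $F_{m,\ell}$, which is the $\ell$-th largest such $k$, obeys the same bound.

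Applying Theorem \ref{EG} to $(P_1(t)/h(t), \ldots, P_n(t)/h(t))$ bounds the classical Frobenius number by a polynomial expression in the entries, and since each entry is polynomially bounded in $t$, we obtain $F_{m,\ell}(P_1(t)/h(t), \ldots, P_n(t)/h(t)) \le C t^{s}$ for some constants $C, s$ and all $t \gg 0$. Multiplying through by $h(t)$, which is itself polynomially bounded, yields a polynomial bound on $F_{m,\ell}(P_1(t), \ldots, P_n(t))$, so any integer $r$ strictly exceeding the degree of this polynomial will satisfy $\ell + F_{m,\ell}(P_1(t), \ldots, P_n(t)) < t^r$ for all sufficiently large $t$. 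The bulk of the argument is routine; the only subtle step is invoking the Roune–Woods divisibility lemmas to justify that $P_i/h$ is EQP and hence polynomially bounded, which is essential because $h(t)$ may exhibit different leading behavior on different residue classes modulo the period.
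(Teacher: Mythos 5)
Your proof is correct and follows essentially the same route as the paper, which likewise deduces the corollary from Theorem \ref{EG} together with the quantitative content of the proof of Proposition \ref{p1} (that $h(k) \ge m$ once $k$ exceeds $m\,a_1a_2 + F(a_1,\ldots,a_n)$), with the gcd-scaling reduction handling the non-coprime case. One small remark: invoking the Roune--Woods EQP lemmas is unnecessary for mere polynomial boundedness, since trivially $1 \le h(t) \le \min_i P_i(t)$ and $P_i(t)/h(t) \le P_i(t)$ for $t \gg 0$.
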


\begin{proof} This follows easily from Theorem \ref{EG} and the proof of Proposition \ref{p1}, which shows that $h(k)>m$ for $k>m a_1 a_2 + F(a_1, \ldots, a_n).$ \end{proof}

\begin{proof}[Proof of Theorem \ref{main}] By Propositions \ref{red1} and \ref{red2}, it suffices to consider the following type of parametric Frobenius problem. Let $n, m,$ and $\ell$ be positive integers such that $n \ge 2.$ Let $P_1, \ldots, P_n$ be polynomials in $\mathbb{Z}[u]$ with positive leading coefficient such that for sufficiently large $t,$ $\gcd(P_1(t), \ldots, \\ P_n(t))=1.$ Then there exists $r$ as in the above corollary. 

We now formulate a parametric exclusion problem as in theorem \ref{main2}. Let $n_1=n, n_2=1, m=m,$ and the indeterminates be $k, b_1, \ldots, b_n.$ Let 
\begin{displaymath} R_1(t)=\left\{(k, b_1, \ldots, b _n) \in [0, t^r-1]^{n+1} \mid k - \sum_{i=1}^ni b_i P_i(t) = \ell\right\}, \quad R_2(t) = [0, t^r-1]. \end{displaymath}

It is easy to see how to form $A_1, A_2, \mathbf{b_1}, \mathbf{b_2}$  such that $R_1$ and $R_2$ match the hypotheses in Theorem \ref{main}. We are specifying $\ell$ on the right hand side because of a rather minor technicality: when there are less than $\ell$ nonnegative integers which are the sum of $P_1(t), \ldots, P_n(t)$ in less than $m$ ways, then we have defined $F_{m, \ell}(P_1(t), \ldots, P_n(t))$ to be a negative integer which is necessarily at least $-\ell.$ Therefore, it is helpful to translate the parametric Frobenius problem by $+\ell$ so that $F_{m, \ell}$ is guaranteed to be nonnegative.

The sets $R_1(t)$ and $R_2(t)$ are bounded for all $t.$ Let $L_1(t), L_2(t),$ and $L_3(t)$ be as in the hypotheses. Let $\mathbf{c} = (1).$ Let $\{f_\ell\}$ be the optimum value functions and $g$ be the size function. By Theorem \ref{main2}, $f_\ell$ and $g$ are EQP for all $\ell.$

For $t \gg 0,$ $\ell+F_{m, \ell}(P_1(t), \ldots, P_n(t)) < t^r$ and $P_1(t), \ldots, P_n(t)>1.$ Since $\gcd(P_1(t), \ldots, P_n(t)) = 1,$ we also have $\ell+F_{m, \ell}(P_1(t), \ldots, P_n(t)) \ge 0.$ For $t \gg 0,$ $L_3(t)$ is the set of all nonnegative integers $k$ such that $k-\ell$ is a nonnegative integer combination of $P_1(t), \ldots, P_n(t)$ in less than $m$ ways, since such $k$ must be less than $t^r,$ and a coefficient in any nonnegative integer combination cannot exceed $k.$ Therefore,
\begin{displaymath} F_{m, \ell}(P_1(t), \ldots, P_n(t)) = -\ell + f_{\ell}(t).\end{displaymath}

Since the EQP property only depends on large arguments, $ F_{m, \ell}(P_1(t), \ldots, P_n(t))$ is an EQP function of $t.$ For $t \gg 0,$ $g(t)$ counts all integers at least $-\ell$ which are nonnegative integer combinations in less than $m$ ways, and all of the integers $-\ell, \ldots, -1$ are counted. However, $G_m(P_1(t), \ldots, P_n(t))$ counts only the positive integers. Therefore, 
\begin{displaymath} G_m(P_1(t), \ldots, P_n(t)) = -\ell + g(t), \end{displaymath}

from which it follows that $G_m(P_1(t), \ldots, P_n(t))$ is an EQP function of $t,$ as before.\end{proof}

\begin{remark}
Theorems \ref{main} and \ref{main2} are not true if constants $m$ and $\ell$ are replaced by polynomials of $t$ such as $t$ itself. For example, $F_{1, t}(t, t-1),$ $F_{t, 1}(6, 10, 15),$ and $G_t(6, 10, 15)$ are not eventually quasi-polynomial. We omit the proofs of these statements. \end{remark}

\section{Future directions}

Our work does not address how the components of the resulting eventual quasi-polynomials are related. For example, the components of $F_{m, \ell}(P_1(t), \ldots, P_n(t))$ are likely to have the same degree fairly often, but from our argument, it is very hard to tell when. Our methods do not offer reasonable bounds on the period of the eventual quasi-polynomials or an algorithmically feasible way of computing them. For these reasons, it may be of interest to refine our argument.

A parametric problem closely related to the parametric Frobenius problem is the identification of a reduced Gr\"{o}bner basis of an associated ideal. See \cite{Rou}. The reduced Gr\"obner basis is not unique, so we define a particular one here and also rewrite the ideas without referring to commutative algebra. For positive integers $a_1, \ldots, a_n,$ let
\begin{displaymath} S:=\{\mathbf{v} \in \mathbb{Z}^n \mid \mathbf{v} \cdot (a_1, \ldots, a_n) = 0 \}. \end{displaymath}

For real $n$-dimensional vectors, write $\prec$ for lexicographically less than, etc. and $<$ for less than in all coordinates, etc. For $\mathbf{v}$ (in $\mathbb{R}^n,$) such that $\mathbf{v} \prec 0,$ we define the positive part of $\mathbf{v},$ $\mathbf{v}^+,$ such that $\mathbf{v}^+_i=\max(v_i, 0),$ and we define $\mathbf{v}^-$ to be $\mathbf{v}^+-\mathbf{v}.$ Let 
\begin{displaymath} G(S):=\{\mathbf{v} \in S \mid \mathbf{v} \prec 0 \wedge \forall \mathbf{u} \in S, (\mathbf{u} \prec 0 \wedge \mathbf{u}^+ \le \mathbf{v}^+) \implies (\mathbf{u}^+ = \mathbf{v}^+ \wedge \mathbf{v}^- \preceq \mathbf{u}^-) \}. \end{displaymath}

In other words, $G(S)$ is the set of the elements of $S$ which are lexicographically less than $0$ whose positive parts do not strictly dominate ($\ge$ in all coordinates and $>$ in at least one) the positive part of any other such element of $S$ and whose negative part is the lexicographically minimum possible for that positive part. It is not hard to show that $G(S)$ is finite for each finite set of positive integers. As before, we let $P_1, \ldots, P_n$ be in $\mathbb{R}[u]$ which take $\mathbb{Z}$ to $\mathbb{Z}$ and are eventually positive. For sufficiently large $t,$ let $S(t)$ be 
\begin{displaymath} \{\mathbf{v} \in \mathbb{Z}^n \mid \mathbf{v} \cdot (P_1(t), \ldots, P_n(t)) = 0 \}, \end{displaymath}
and let $G(S(t))$ be as above. 

\begin{conjecture} There exists a PILP given by a finite disjunction of finite conjunction of parametric inequalities whose lattice point set equals $G(S(t))$ for $t \gg 0.$ The same is true for $\{ \mathbf{v}^+ \mid \mathbf{v} \in G(S(t)) \}.$ \end{conjecture}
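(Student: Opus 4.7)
The plan is to express membership in $G(S(t))$ as a finite disjunction of finite conjunctions of parametric inequalities, combining a sign-pattern decomposition with the parametric exclusion framework of Theorem \ref{main2}.

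First I would establish a polynomial bound: for $t \gg 0$, every $\mathbf{v} \in G(S(t))$ has $|v_i| \le t^r$ for some $r$ independent of $t$. This follows from standard bounds on reduced Gr\"obner basis elements of the toric ideal associated to $(P_1(t), \ldots, P_n(t))$; their exponents are polynomially bounded in the defining $P_i(t)$, and Theorem \ref{EG} suffices to give the required crude polynomial estimate via $\sum v_i^+ P_i(t) = \sum v_i^- P_i(t)$. This restricts the search to a parametric hyperrectangle $[-t^r, t^r]^n$. Next I would decompose by sign patterns: for each $\sigma_v \in \{+, 0, -\}^n$, fixing the sign pattern of $\mathbf{v}$ turns $\mathbf{v}^+$ and $\mathbf{v}^-$ into linear functions of $\mathbf{v}$, so both $\mathbf{v} \in S(t)$ (the single parametric equation $\sum v_i P_i(t) = 0$) and $\mathbf{v} \prec 0$ (a finite disjunction of linear conditions) become finite logical combinations of parametric inequalities in $\mathbf{v}$.

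The main obstacle is the universal quantifier in the definition of $G(S(t))$: one must rule out the existence of $\mathbf{u} \in S(t)$ with $\mathbf{u} \prec 0$, $\mathbf{u}^+ \le \mathbf{v}^+$, and either $\mathbf{u}^+ \ne \mathbf{v}^+$ or $\mathbf{u}^- \prec \mathbf{v}^-$. The crucial observation is that any such $\mathbf{u}$ lies in a bounded region determined by $\mathbf{v}$ and $t$: $\mathbf{u}^+ \le \mathbf{v}^+$ bounds $\mathbf{u}^+$ explicitly, and the identity $\sum u_i^+ P_i(t) = \sum u_i^- P_i(t) \le \sum v_i^+ P_i(t)$ bounds $\mathbf{u}^-$ polynomially in $t$. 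After further decomposing by the sign pattern $\sigma_u$ of $\mathbf{u}$, each of the $9^n$ resulting cases reduces the ``bad $\mathbf{u}$'' condition to a finite logical combination of parametric inequalities in $(\mathbf{v}, \mathbf{u})$. This fits Theorem \ref{main2} exactly with $m = 1$: the outer variable is $\mathbf{v}$, the inner variable is $\mathbf{u}$, and we want precisely the $\mathbf{v}$'s over which the set of bad $\mathbf{u}$'s is empty.

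Applying the $Q^*$ construction of Section \ref{sec-ep} to each sign-pattern case and taking the disjunction over all $9^n$ cases then produces a PILP whose constraints form a finite disjunction of finite conjunctions of parametric inequalities in an extended variable set and whose projection onto $\mathbf{v}$ is exactly $G(S(t))$ for $t \gg 0$. For the second claim about $\{\mathbf{v}^+ : \mathbf{v} \in G(S(t))\}$, within each sign-pattern case the map $\mathbf{v} \mapsto \mathbf{v}^+$ is a coordinate projection (keep the positive coordinates, drop the rest), so its image arises from the same PILP after moving the negative coordinates into the auxiliary indeterminate set. The hardest step is the polynomial bound on $G(S(t))$, because without it the exclusion framework cannot even be set up; beyond that, the main difficulty is the bookkeeping of merging the sign-pattern decomposition with the $Q^*$ construction so that all integrality auxiliaries and disjunctive clauses remain in the required form.
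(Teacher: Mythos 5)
First, note that the statement you are proving is left open in the paper: it is stated as a conjecture, and the authors explicitly say they did not pursue it, so there is no proof of record to compare against; your proposal has to stand on its own. As written it does not close the conjecture, and the gap is at the final step. The conjecture demands a PILP in the variables $\mathbf{v}$ alone whose \emph{lattice point set equals} $G(S(t))$ (and likewise for $\{\mathbf{v}^+\}$), i.e.\ that $G(S(t))$ is, for $t \gg 0$, a finite union of sets $\{\mathbf{v} \in \mathbb{Z}^n \mid A_j(t)\mathbf{v} \le \mathbf{b}_j(t)\}$. What your construction delivers --- and what you yourself state --- is a PILP in an \emph{extended} variable set whose \emph{projection} onto $\mathbf{v}$ is $G(S(t))$. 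That is a strictly weaker kind of statement: projections of lattice point sets of such PILPs can encode congruence conditions (this is exactly what the remainder indeterminates $Y_{\alpha,H,u,2}$ in $Q^*$ do), and a condition like $T \mid (\text{linear expression in } \mathbf{v}, t)$ is in general not expressible as a finite disjunction of finite conjunctions of parametric inequalities in $\mathbf{v}$ alone. Moreover, the $Q^*$ construction is only available for the reformulated problem of Theorem \ref{main3}, whose inner matrix has constant entries; since your coupling constraints involve the non-constant coefficients $P_i(t)$, you must first pass through the base-$t$ digit reduction of Section \ref{sec-btr}, so the PILP you end up with lives in digit coordinates, and the map back to $\mathbf{v}$ is $\varphi_{2,t}$ (evaluation of digits), not a coordinate projection; its images can be, e.g., arithmetic progressions with non-unit gaps, which are not lattice point sets of parametric polyhedra. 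Relatedly, Theorem \ref{main2} itself concludes only that the optimum value and size functions are EQP --- it gives no set-level description of $L_3(t)$ --- so ``this fits Theorem \ref{main2} with $m=1$'' cannot by itself produce the object the conjecture asks for. Proving the conjecture would require showing that in this specific problem the integrality/congruence auxiliaries can be eliminated, i.e.\ that $G(S(t))$ really is a finite union of parametric polyhedral lattice point sets in the original coordinates (as the example in the paper's remark suggests); that is precisely the content of the conjecture and is not addressed by your reduction.

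A secondary issue is the opening bound. You assert $|v_i| \le t^r$ for $\mathbf{v} \in G(S(t))$ by appeal to ``standard bounds on reduced Gr\"obner basis elements'' and to Theorem \ref{EG}, but Erd\H{o}s--Graham bounds the Frobenius number and does not by itself bound the entries of minimal relations; and $G(S)$ is defined combinatorially in the paper, so its identification with a reduced Gr\"obner basis (hence with a subset of the Graver basis, where primitive-partition-identity bounds of the form $\|\mathbf{v}\|_1 = O(\max_i P_i(t))$ would give what you want) has to be argued, not cited. This part is likely fixable, but as it is the hypothesis needed even to set up your exclusion problems (boundedness of $R_1(t)$, $R_2(t)$) is unproved. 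Finally, small technical points if you pursue this route: Theorem \ref{main2} requires nonnegative outer variables (handle the negative coordinates of $\mathbf{v}$ by a sign substitution within each sign-pattern case) and an inner set cut out by equations only, so the inequalities $\mathbf{u}^+ \le \mathbf{v}^+$ and the disjunction ``$\mathbf{u}^+ \neq \mathbf{v}^+$ or $\mathbf{u}^- \prec \mathbf{v}^-$'' must be split into cases and slack-converted before the theorem applies; with $m=1$ the intersection over cases is harmless, but this bookkeeping should be made explicit.
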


\begin{remark} G(S)(t) may not be of bounded size. One can show that when $n=4,$ $P_1(t)=\\ 2t^2+4t, P_2(t)=2t^2-t+1, P_3(t)=4t^2+2t-1, P_4(t)=4t^2-3t,$ for $t \gg 0,$ 
\begin{displaymath} G(S(t)) = \{(-1,1, 1, -1), (-2t+2, -1, t-1, 2), \ldots \langle +1, -1, -1, +1 \rangle \ldots, (-t+1, -t, 0, t+1),  \end{displaymath} 
\begin{displaymath}  (0, -2t, t, -1), (-2t+1, 2t, 0, 2) \ldots \langle +1, -1, -1, +1 \rangle \ldots (-t-1, t+2, -t+2, t), (-1, 2t+1, -t+1, 0) \}. \end{displaymath} \end{remark}

The original motivation for formulating this conjecture was propositions 4 and 5 of Roune's \cite{Rou}, which showed that the Frobenius number, as a function of $t,$ is the maximum of a fixed polynomial covector times $\mathbf{v}$ for $\mathbf{v}$ in some set related to $G(S(t)).$ If $G(S(t))$ can be understood, then Theorem \ref{Shen} can be applied. We ended up using a more direct approach to decompose the parametric Frobenius problem.

Our work shows that the parametric Frobenius problem can be well understood by transforming it into a parametric integer linear program. This idea may be useful in studying other parametric combinatorics problems such as those in \cite{Woo} whose answers are suspected to be eventually polynomial.

\section{Acknowledgements}

The author thanks Joe Gallian for suggesting the problem and for helpful comments on the manuscript. This research was conducted at the University of Minnesota Duluth REU and was supported by NSF grant 1358695 and NSA grant H98230-13-1-0273.


\begin{thebibliography}{9}
\bibitem{CLS} 
Sheng Chen, Nan Li, and Steven V. Sam. Generalized Ehrhart polynomials. \textit{Trans. Amer. Math. Soc.}, 364(1):551-569, 2012.

\bibitem{EG}
Paul Erd\H{o}s and Ron Graham. On a linear diophantine problem of Frobenius. \textit{Acta Arith.} 21(1),399-408.

\bibitem{Ram}
J. L. Ram\'irez Alfons\'in. \textit{The diophantine Frobenius problem},\textit{Oxford Lecture Ser. Math. Appl.} 30. Oxford University Press, Oxford, 2005.

\bibitem{Rou}
Bjarke Roune. Solving thousand digit Frobenius problems using Gr\"{o}bner bases. \textit{J. Symbolic Comput.} 43 (1),1–7. 

\bibitem{RW}
Bjarke Roune and Kevin Woods. The parametric Frobenius problem. \textit{Electron. J. Combin.} 22(2015), \#P2.36.

\bibitem{Shen}
Bobby Shen. Parametrizing an integer linear program by an integer. Preprint, arXiv:1510.01343 [math.CO]

\bibitem{Syl}
James J. Sylvester. Mathematical questions with their solutions. \textit{Educational Times}, 41(21), 1884.

\bibitem{Woo}
Kevin Woods. The unreasonable ubiquitousness of quasi-polynomials. \textit{Electronic Journal of Combinatorics}, 21(1):\#P1.44, 2014.
\end{thebibliography}
\end{document}